\newcommand{\Fq}{{\bf F}_{q}\/}
\newcommand{\PG}{\mathrm{PG}}
\newcommand{\p}{{\bf P}}
\newcommand{\cK}{{\cal K}}
\newcommand{\cS}{{\cal S}}
\newcommand{\cP}{{\cal P}}
\newcommand{\cB}{{\cal B}}
\newcommand{\bsl}{\backslash}
\newcommand{\sqq}{{\sqrt{q}}}
\newcommand{\qa}{\textstyle\frac{1}{4}}
\newcommand{\txt}{\textstyle}
\newcommand{\pgg}{\PG(n,q)}
\newcommand{\ha}{{\textstyle\frac{1}{2}}}
\newcommand{\tqa}{\textstyle\frac{3}{4}}
\newcommand{\sha}{{\textstyle\frac{7}{2}}}
\newcommand{\elha}{{\textstyle\frac{11}{2}}}
\newcommand{\cO}{{\cal O}}
\newcommand{\gs}{\sigma}
\newcommand{\gt}{\tau}
\newcommand{\Fqn}{{\bf F}_{q^n}}
\newcommand{\bF}{{\bf F}}
\newcommand{\F}{${\bf F}_{q}$\/}
\newcommand{\gD}{\Delta}
\newcommand{\gd}{\delta}
\newcommand{\gz}{\zeta}
\newcommand{\cH}{{\cal H}}
\newcommand{\cE}{{\cal E}}
\newcommand{\cU}{{\cal U}}
\newcommand{\gO}{\Omega}
\newcommand{\wtl}{\widetilde}
\newcommand{\g}{\gamma}
\newcommand{\cV}{{\cal V}}
\newcommand{\ov}{\overline}
\newcommand{\cW}{{\cal W}}
\newcommand{\cM}{{\cal M}}
\newcommand{\gth}{\theta}
\newcommand{\cC}{{\cal C}}
\definecolor{dmag}{rgb}{.5,0,.5}
\theoremstyle{plain}
\newtheorem{theorem}{Theorem}[section]
\newtheorem{cor}[theorem]{Corollary}
\theoremstyle{definition}
\newtheorem{defn}[theorem]{Definition}
\newtheorem{example}[theorem]{Example}
\newtheorem{rem}[theorem]{Remark}
\title{Arcs, Caps and Generalisations in a Finite Projective Space}
\author{J.W.P. Hirschfeld and J.A. Thas}
\date{}
 \def\section{\@startsection {section}{1}{\z@}{-1.5ex plus -.5ex
 minus -.2ex}{1ex plus .2ex}{\large\bf}}
\begin{document}

\maketitle

\begin{tabular}{ll}
Department of Mathematics &\quad Department of Mathematics\\
University of Sussex&\quad Ghent University\\
Brighton  BN1 9QH &\quad 9000 Gent\\
United Kingdom &\quad Belgium
\end{tabular}


\vspace*{1mm}

\vspace*{3mm}


\begin{abstract}
Arcs and caps are fundamental structures in finite projective spaces. They can
be generalised. Here, a survey is given of some important results on these
objects, in particular on generalised ovals and generalised ovoids. The paper
also contains recent results  and several open problems. 
\end{abstract}

\section{Notation}
\begin{tabular}{ll}
$\Fq$  & the finite field of order $q$\\
$\PG(n,q)$ & the projective space of $n$ dimensions over $\Fq$\\
$\p(x_0,x_1,\ldots,x_n)$ & the point of $\PG(n,q)$ with coordinate vector 
$(x_0,x_1,\ldots,x_n)$\\
$\Pi_r$ & a subspace of dimension $r$ in $\PG(n,q)$\\
$V(k,q)$ & the vector space of $k$ dimensions over $\Fq$
\end{tabular}

\section{Introduction}
\label{intro}
A non-singular conic of the projective plane $\PG(2, q)$ over the Galois field
$\Fq$ consists of $q + 1$ points no three of which are collinear. It is
natural to ask if this non-collinearity condition for $q + 1$ points is
sufficient for them to be a conic. In other words, does this combinatorial
property characterise non-singular conics? For $q$ odd, this question was
affirmatively answered in 1954 by Segre \cite{segB54,segB55a}. 

Generalising, Segre considered sets of $k$ points in the $n$-dimensional
projective space $\PG(n, q)$, $k \geq 3$ and $n \geq 2$, no three of which are
collinear. For $n = 2$, such sets are \emph{$k$-arcs} of $\PG(2, q)$; for $n
\geq 3$ these sets are \emph{$k$-caps} of $\PG(n, q)$. Further, Segre
considers sets of $k$ points in $\PG(n,q),\ k \geq n+1,$ no $n+1$ of which lie
in a hyperplane; these sets are  \emph{$k$-arcs} of $\PG(n, q)$. There is a 
strong relation between arcs and algebraic curves, algebraic hypersurfaces 
and linear maximum distance separable (MDS) codes

Arcs and caps can be generalised by replacing their points with
$r$-dimensional subspaces to obtain \emph{generalised $k$-arcs} and
\emph{generalised $k$-caps} of $\PG(m, q)$ \cite{thaJ71a}. These have strong
connections to generalised quadrangles, projective planes, circle geometries,
strongly regular graphs, finite groups, and linear projective two-weight
codes. In this survey, results and problems concerning these objects are
discussed. The focus is on generalised ovals and generalised ovoids.

There is an enormous amount of material and results on arcs and caps in finite
projective spaces. On these, just a few important definitions and results are
mentioned. The emphasis is on some bounds, in particular on general bounds
that hold for all values of the size $q$ of the field, up to parity of $q$ and
a few exceptional small values. Some of these bounds, for example Theorems
5.25 and 5.26, cannot be found in earlier surveys or books. Many interesting
and strong reults on arcs are contained in 
\cite{balS12}, \cite{balS18}, \cite{balS12a}.

The main part of this paper is on generalised ovals and generalised ovoids.
Many characterisations and classifications are given. The last part is
focussed on the relations between certain generalised ovoids and finite
translation generalised quadrangles of order $(s,s^2)$. Also, the relationship
beween Moufang generalised quadrangles, generalised ovoids and the theorem of
Fong and Seitz on groups with a BN-pair of rank 2 is explained; see the recent
paper \cite{thaJ22a}.

Finally, several open problems are stated.

\section{Arcs, ovals and hyperovals in $\PG(2,q)$}
\label{sec3}

\begin{defn}
\begin{enumerate}[(1)]

\item  A {\em $k$-arc} in $\PG(2,q)$  is a set of $k$ points, with
$k\geq  3,$ such that no three of its points lie on a line.

\item An arc $\cK$ is {\em complete} if it is not properly contained in a larger
arc.

\item If $\cK\cup\{P\}$ is an arc for a point $P$ that is not in $\cK$, then $P$
\emph{extends} $\cK$.
   
\end{enumerate}
\end{defn}

\begin{theorem}{\rm (\cite[Chapter 8]{hirJ98})}

Let $\cK$ be a $k$-arc of $\PG(2,q)$. Then
\begin{enumerate}[\rm(i)]
\item $k\leq q+2;$

\item for $q$ odd$,\ k\leq q+1;$

\item any non-singular conic is a $(q+1)$-arc$;$

\item for $q$ even$,$ a  $(q+1)$-arc extends to a $(q+2)$-arc.
\end{enumerate}
\end{theorem}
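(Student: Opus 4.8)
The plan is to treat the four parts via a uniform counting device: for a fixed point $P$ of a $k$-arc $\cK$, the $q+1$ lines through $P$ each meet $\cK$ in at most one further point (no three collinear), and they must cover all $k-1$ remaining points of $\cK$. Hence $k-1 \le q+1$, giving $k \le q+2$, which is (i). The lines through $P$ that meet $\cK$ in a second point are called \emph{secants} at $P$; the rest are \emph{tangents} (unisecants) at $P$ or external lines (which cannot occur through a point of $\cK$). So through each point of $\cK$ there are exactly $k-1$ secants and $q+2-k$ tangents.

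For (ii) I would count incidences between tangent lines and points of $\cK$, using a parity/Segre-type argument that is special to $q$ odd. Suppose $k=q+2$. Then through every point of $\cK$ there are $q+1$ secants and $0$ tangents, so every line of the plane is either a secant (meeting $\cK$ in exactly $2$ points) or external (meeting it in $0$ points). Counting the number of secants: each of the $\binom{k}{2}=\binom{q+2}{2}$ pairs of points of $\cK$ lies on a unique line, all distinct, so there are $\binom{q+2}{2}$ secants; but the total number of lines in $\PG(2,q)$ is $q^2+q+1$, and one checks $\binom{q+2}{2} > q^2+q+1$ for $q \ge 2$ — actually equality considerations and a direct count of lines through a point \emph{off} $\cK$ give the contradiction. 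Concretely, take $Q \notin \cK$; every line through $Q$ meets $\cK$ in $0$ or $2$ points, so $k$ is even; thus $q+2$ is even, i.e. $q$ is even, contradicting $q$ odd. Hence $k \le q+1$ when $q$ is odd. This last observation — that off a hyperoval-candidate every line is even-order, forcing $q$ even — is the crux and is the only place parity enters.

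For (iii), a non-singular conic $\cC$ in $\PG(2,q)$ has exactly $q+1$ points (parametrise the conic rationally, or count via its defining quadratic form), and any line meets $\cC$ in at most $2$ points since a line substituted into a non-degenerate quadratic form gives a binary quadratic with at most $2$ roots; in particular no three points of $\cC$ are collinear, so $\cC$ is a $(q+1)$-arc. For (iv), let $\cK$ be a $(q+1)$-arc with $q$ even. Through each point $P \in \cK$ there are $q$ secants and exactly one tangent line $t_P$. I would show all these $q+1$ tangent lines pass through a common point, the \emph{nucleus} $N$: take two points $P, R \in \cK$; the $q-1$ secants joining $P$ to the points of $\cK \setminus \{P,R\}$, together with the line $PR$, account for $q$ of the $q+1$ lines through $P$, and a counting argument on the lines through a point $Q \ne P,R$ on $PR$ — each meeting $\cK$ in an even number of points because the secant pencils partition $\cK \setminus (\text{line } PR)$ into pairs — shows the tangents at $P$ and at $R$ meet on no secant, and a short incidence count pins them to a single common point $N$. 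Then $N \notin \cK$ and every line through $N$ is a tangent, so $\cK \cup \{N\}$ has no three collinear points, i.e. it is a $(q+2)$-arc. The main obstacle here is the nucleus argument: verifying that the $q+1$ tangents are concurrent rather than merely pairwise non-adjacent requires the even-order counting lemma (every line off the arc meets it evenly, hence in $0$ or $2$ points when the arc has $q+1$ points and $q$ is even) applied carefully, and this is where I expect to spend the most effort.
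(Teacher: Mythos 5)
The paper states this theorem without proof, citing Chapter 8 of Hirschfeld's book, so your proposal has to be measured against the standard arguments (Bose/Qvist) rather than against anything in the text. Parts (i)--(iii) are essentially those arguments and are correct: the pencil of $q+1$ lines through a point of $\cK$ gives $k\le q+2$; for (ii), the parity count through a point $Q\notin\cK$ of a putative $(q+2)$-arc (every line meets it in $0$ or $2$ points since no tangents exist, so $q+2$ is even) is exactly the classical proof. Note, though, that your first attempted contradiction in (ii) rests on a false inequality: $\binom{q+2}{2}=(q+1)(q+2)/2\le q^2+q+1$ for all $q\ge 2$, so the secant count alone yields nothing; you abandon this, correctly, in favour of the parity argument, which is the one that works. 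Part (iii) is fine as sketched.

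The genuine gap is in (iv). The lemma you identify as the crux --- ``every line off the arc meets it evenly, hence in $0$ or $2$ points when the arc has $q+1$ points and $q$ is even'' --- is false for a $(q+1)$-arc: there are exactly $q+1$ tangent lines, each meeting $\cK$ in one point, and each of them passes through $q$ points off $\cK$. (The even-intersection property belongs to the $(q+2)$-arc of part (ii), and you appear to have transplanted it.) The correct parity statement is about the pencil, not about individual lines: for $Q\notin\cK$, the intersection numbers of the $q+1$ lines through $Q$ sum to $|\cK|=q+1$, which is odd when $q$ is even, so the number of tangents through $Q$ is \emph{odd}, in particular at least one. The standard completion is then: fix a secant $\ell=P_1P_2$; the tangents $t_{P_1},t_{P_2}$ meet $\ell$ only at $P_1,P_2$, so the remaining $q-1$ tangents are distributed over the $q-1$ points of $\ell\setminus\{P_1,P_2\}$, each of which must receive an odd, hence exactly one, tangent; consequently a point lying on two tangents lies on no secant, so all $q+1$ lines through it are tangents (they must cover the $q+1$ points of $\cK$), and since there are only $q+1$ tangents in total they all concur at this point $N$, and $\cK\cup\{N\}$ is a $(q+2)$-arc. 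Your sketch gestures at this conclusion, but the counting lemma as you state it would not survive being written out; replacing it by the odd-number-of-tangents lemma closes the gap.
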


\begin{defn}
In $\PG(2,q),$
\begin{enumerate}[(1)]
\item a $(q+1)$-arc is an {\em oval}$;$

\item a $(q+2)$-arc$,\ q$ even$,$ is a {\em complete oval} or
{\em hyperoval}.

\end{enumerate}

\end{defn}

\begin{theorem}{\rm (Segre \cite{segB54},\cite{segB55a};
\cite[Chapter 8]{hirJ98})}
In $\PG(2,q),\ q$ odd$,$ every oval is a non-singular conic.
\end{theorem}

\begin{rem}
For $q$ even, a non-singular conic extends to a hyperoval $\cK$. For $q\geq 8$, let
$\cK = C\cup\{P\}$, with $C$ a non-singular conic. If $P'\in C$, then
$\cK\bsl\{P'\}$ is an oval that is not a conic; this follows from the fact
that two distinct non-singular conics have at most four points in common. 
Hence, for $q$ even and  $q\geq 8$, not every oval is a conic. Also,
for $q$ even and  $q > 8$, there are many hyperovals that do not contain a
conic; see \cite{hirJ79,hirJ98}. 
\end{rem}

\begin{theorem}{\rm (Segre \cite{segB67}; \cite{thaJ87a}; 
\cite[Chapter 8]{hirJ98})}
\label{thm2.4.4}
\begin{enumerate}[\rm(i)]
\item For $q$ even$,$ a $k$-arc with
\[
k > q - \sqq + 1
\]
extends to a hyperoval.

\item For $q$ odd$,$ a $k$-arc with
\[
k > q - \qa\sqq + {\txt\frac{25}{16}}
\]
extends to an oval.
\end{enumerate}
\end{theorem}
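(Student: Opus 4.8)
The plan is to argue the contrapositive and then bootstrap. Put $t=q+2-k$, so that through each point of $\cK$ there pass exactly $t$ tangent lines and $\cK$ has $N=kt$ tangents altogether. The elementary reduction is that a point $Q\notin\cK$ extends $\cK$ if and only if $Q$ lies on no bisecant of $\cK$, equivalently when each of the $k$ lines $QP$ ($P\in\cK$) is a tangent of $\cK$; so if $\cK$ is complete then no point off $\cK$ lies on $k$ tangents. If moreover $\cK$ is not a hyperoval (for $q$ even) or an oval (for $q$ odd), then $k\le q$ and hence $t\ge 2$: here one uses the facts recalled above that for $q$ even a $(q+1)$-arc always extends, hence is incomplete, and that for $q$ odd a $(q+1)$-arc is by definition an oval. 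Once such a complete $\cK$ is shown to satisfy the stated bound, the theorem follows by repeatedly adjoining an extending point until the arc reaches size $q+2$ (a hyperoval, $q$ even) or $q+1$ (an oval, $q$ odd, which by Segre's theorem is a conic).

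The core is Segre's construction of the \emph{dual curve} of $\cK$. First I would establish the \emph{Lemma of Tangents}: taking three points of $\cK$ as the triangle of reference and writing the tangents at the three vertices in the normalised forms $x_2=c_ix_1$, $x_0=a_ix_2$, $x_1=b_ix_0$ ($i=1,\dots,t$), the products satisfy $\prod_i a_ib_ic_i=1$ for $q$ even and $=(-1)^{t}$ for $q$ odd (up to the precise sign convention). Fixing two of the vertices and letting the third range over $\cK$, then varying all three, these multiplicative identities assemble into the statement that the $N$ tangents, viewed as points of the dual plane $\PG(2,q)^{\ast}$, all lie on an algebraic curve $\Gamma$ of degree $t$ when $q$ is even and of degree $2t$ when $q$ is odd. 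A linear component of $\Gamma$ is a pencil of lines through a common point $R$ of $\PG(2,q)$, and such an $R$ would then lie on at least $k$ tangents and hence extend $\cK$; so under the completeness hypothesis $\Gamma$ has no linear component over the algebraic closure of $\Fq$.

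It then remains to bound $\#\Gamma(\Fq)$ above by a function of $\deg\Gamma$ and $q$ and compare with $N=kt$. For $q$ even the classical Hasse--Weil estimate (with the linear-component case excluded) yields $kt\le q+1+(t-1)(t-2)\sqrt q$; substituting $k=q+2-t$ and dividing by $t-1>0$ gives $q\le (t-1)+(t-2)\sqrt q$, whence $\sqrt q\le t-1$ and $k=q+2-t\le q-\sqrt q+1$, which is (i). For $q$ odd the same scheme with $\deg\Gamma=2t$ leads, after rearrangement, only to an inequality of the shape $q\le (t-1)+2(2t-1)\sqrt q$, too weak by a factor of roughly four; to reach $k\le q-\tfrac14\sqrt q+\tfrac{25}{16}$ one must replace Hasse--Weil by the sharper St\"ohr--Voloch bound on the number of rational points of $\Gamma$, exploiting that the dual curve of an arc is non-classical (its Frobenius-order sequence is non-generic in the given characteristic), which improves the coefficient of $\sqrt q$ by the amount needed.

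I expect the main obstacle to lie precisely in this last step in the odd case: one must control the dual curve well enough to know it is component-free under completeness, and then squeeze out the sharp constant, where a naive Hasse--Weil argument loses the factor of about four and the St\"ohr--Voloch theory, together with the non-classicality of $\Gamma$, is genuinely required; handling reducible $\Gamma$ carefully (rather than just the absolutely irreducible case) is part of this. A secondary nuisance is the finite list of small $q$ for which these curve-theoretic bounds are vacuous or weaker than the purely combinatorial ones; those values must be dealt with by separate, in principle finite, arguments.
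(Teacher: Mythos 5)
The paper itself offers no proof of this theorem --- it is a survey statement with citations to Segre, to Thas's 1987 paper and to Chapter 8 of Hirschfeld's book --- so your proposal has to be measured against those sources. Your overall architecture is indeed theirs: the contrapositive plus bootstrap, the Lemma of Tangents assembling the $N=kt$ tangents into a dual curve $\Gamma$ of degree $t$ ($q$ even) or $2t$ ($q$ odd), the exclusion of linear components via completeness (note that this step needs the stronger fact, which you should state, that $\Gamma$ meets each dual line $P_i^{\ast}$ \emph{exactly} in the tangents at $P_i$, so that a linear component forces every line $RP_i$ to be a tangent), and then a point count. The problems are in the point counts. In (i), the inequality $kt\le q+1+(t-1)(t-2)\sqrt{q}$ is simply not a consequence of Hasse--Weil for a possibly reducible $\Gamma$: a union of $t/2$ conics already carries about $t(q+1)/2$ rational points. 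You flag reducibility as a ``nuisance'' but it kills the inequality as written. The repair is that Hasse--Weil is not needed at all here: a plane curve of degree $t$ with no $\Fq$-rational line as a component meets every line in at most $t$ points, so projecting from one of its points gives at most $(t-1)q+t$ rational points; then $kt\le(t-1)q+t$ yields $q\le t^{2}-t$, hence $t>\sqq$ and $k\le q-\sqq+1$. This elementary bound is the one Segre actually uses, and it is valid for reducible curves.

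For (ii) the gap is more serious, because you have misidentified where the factor $\frac14$ comes from. It is not recovered by St\"ohr--Voloch: Segre's own argument, using only Hasse--Weil, already gives $k\le q-\qa\sqq+\frac74$. The mechanism is that for $q$ odd each tangent is a point where $\Gamma_{2t}$ meets the corresponding dual line with intersection multiplicity $2$; combining this with a decomposition of $\Gamma$ into absolutely irreducible components, a count of how the $kt$ tangents distribute among components of given degrees, and Hasse--Weil on each component produces the $\qa\sqq$. This component analysis --- which components are defined over $\Fq$, which are absolutely irreducible, and how many tangents each can carry --- is exactly the content you have deferred, and without it your sketch produces no usable inequality (your ``naive'' version is, as you note, off by a factor of about four, and St\"ohr--Voloch applied naively to a possibly reducible curve does not obviously close that gap). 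The St\"ohr--Voloch theorem enters only in Thas's cited refinement, to improve the constant term from $\frac74$ to $\frac{25}{16}$. So your plan for (ii) would not, as described, reach the stated bound; the missing ingredient is Segre's multiplicity-two structure of the odd-characteristic dual curve, not a stronger bound on rational points.
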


\noindent
{\bf Open problem 1.} Classify all ovals and hyperovals for $q$ even.

\section{Arcs in $\PG(n,q),\ n\geq 3$ }
\label{sec4}
\begin{defn}
\begin{enumerate}[(1)]

\item  A {\em $k$-arc} in $\PG(n,q)$  is a set of $k$ points, with
$k\geq n+1\geq 3,$ such that no $n+1$ of its points lie in a hyperplane.

\item An arc $\cK$ is {\em complete} if it is not properly contained in a larger
arc.

\item Let $m(n,q)$ be the maximum size of a $k$-arc in $\pgg$. 

\item A {\em normal rational curve} of $\PG(n,q),\ n\geq 2,$ is any set of
points in $\PG(n,q)$ that is projectively equivalent to 
\[
\{\p(t^n,t^{n-1},\ldots,t,1)\mid t\in \Fq\}\cup \{\p(1,0,\ldots,0)\}.
\]
\end{enumerate}
\end{defn}

\begin{defn}
\begin{enumerate}[(1)]
\item With $V(m,q)$ the vector space of $m$ dimensions over $\Fq$, a 
\emph{linear code} $C$ is a subspace of  $V(m,q)$.

\item $C$ is an $[m,k,d]$ or an $[m,k,d]_q$ code if it has dimension $k$ and
minimum distance $d$, where the distance between distinct vectors
$(x_1,x_2,\ldots,x_m)$ and $(y_1,y_2,\ldots,y_m)$ of $C$ is the number of indices $i$
for which $x_i\neq y_i$ and with $d$ the minimum of these distances; here,
$d\leq m -k + 1$.

\item $C$ is \emph{maximum distance separable} (MDS) if  $d= m - k + 1$.
\end{enumerate}
\end{defn}

\begin{theorem}
For $m\geq 3,$ an {\rm MDS} $[m,k,d]$ code $C$ is equivalent to a  $k$-arc of $\PG(m-1,q)$.
\end{theorem}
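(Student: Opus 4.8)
The plan is to realise the code geometrically through a generator matrix: the $m$ columns of a $k\times m$ generator matrix of $C$ span $m$ one-dimensional subspaces of $V(k,q)$, hence determine $m$ points of $\PG(k-1,q)$, and I expect the MDS property to be exactly the statement that these $m$ points form an arc. After establishing this I would reverse the construction and check that the correspondence respects the natural notions of equivalence on the two sides.

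\textbf{Step 1: the MDS condition via a generator matrix.} The first step is to fix a $k\times m$ matrix $G$ of rank $k$ whose rows span $C$ and to reformulate the MDS property in terms of $G$. Each nonzero codeword is $\mathbf{a}G$ for some $\mathbf{a}\in V(k,q)\setminus\{\mathbf{0}\}$, and $\mathbf{a}G$ vanishes on a set $S$ of coordinate positions precisely when $\mathbf{a}$ is orthogonal to every column of $G$ indexed by $S$; such a nonzero $\mathbf{a}$ exists if and only if those columns fail to span $V(k,q)$. Hence $C$ has a nonzero word of weight at most $m-k$ (equivalently, one with at least $k$ zero coordinates) if and only if some $k$ columns of $G$ are linearly dependent. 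Since the Singleton bound forces $d\le m-k+1$ in every case, it follows that $C$ is MDS if and only if every $k$ columns of $G$ are linearly independent; in particular no column is $\mathbf{0}$, so the columns determine $m$ well-defined points of $\PG(k-1,q)$.

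\textbf{Step 2: from the matrix condition to the arc condition.} Next I would translate the condition of Step 1 into geometry. A hyperplane of $\PG(k-1,q)$ is the projectivisation of a $(k-1)$-dimensional subspace of $V(k,q)$, so $k$ columns of $G$ are linearly dependent exactly when the corresponding $k$ points of $\PG(k-1,q)$ lie in a common hyperplane. Thus $C$ is MDS if and only if no $k$ of the $m$ points coming from the columns of $G$ lie in a hyperplane; since $m\ge k$, these points are then automatically pairwise distinct and form an $m$-arc of $\PG(k-1,q)$. Conversely, given an $m$-arc of $\PG(k-1,q)$, I would pick a vector representing each of its points and let $G$ be the $k\times m$ matrix with these as columns; the arc condition forces $\operatorname{rank} G=k$, so $G$ generates an $[m,k]$ code, which is MDS by Step 1.

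\textbf{Step 3: equivalences.} Finally I would check that the construction is compatible with the relevant equivalence relations. Replacing $G$ by $BG$ with $B\in\mathrm{GL}(k,q)$ leaves $C$ unchanged and applies a projectivity to the associated point set; altering the vector chosen to represent a point of the arc multiplies one column of $G$ by a nonzero scalar, replacing $C$ by a monomially equivalent code; and permuting the coordinates of $C$ permutes the points of the arc. Hence monomial-equivalence classes of MDS $[m,k,d]$ codes correspond bijectively to projective-equivalence classes of $m$-arcs of $\PG(k-1,q)$, which is the asserted equivalence. None of the individual steps is difficult, and I expect no genuine obstacle: the only real content sits in the reformulation of the MDS property in Step 1, while the one place demanding care is the bookkeeping in Step 3 -- being explicit about which operations on codes match which operations on arcs, so that the word ``equivalent'' carries the same meaning on both sides.
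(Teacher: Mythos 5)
Your proof is correct and follows essentially the same route as the paper's: realise the code by a generator matrix, observe that the MDS property is equivalent to every set of (dimension-many) columns being linearly independent, and read the columns as points of a projective space; your Step 1 in fact supplies the weight-counting justification that the paper only asserts, and your Step 3 makes precise the word ``equivalent''. The one discrepancy is purely notational: following the paper's own definition of an $[m,k,d]$ code (length $m$, dimension $k$) you correctly obtain an $m$-arc of $\PG(k-1,q)$, whereas the theorem as printed claims a $k$-arc of $\PG(m-1,q)$ --- the paper's proof silently swaps the roles of $m$ and $k$ (taking $C$ to be an $m$-dimensional subspace of $V(k,q)$ with an $m\times k$ generator matrix), so the mismatch lies in the source, not in your argument.
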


\begin{proof}
Let $C$ be an $m$-dimensional subspace of $V(k,q)$ and let $G$ be an $m\times
k$ generator matrix for $C$; that is, the rows of $G$ form a basis for $C$.
Then $C$ is MDS if and only if any $m$ columns of $G$ are linearly independent;
this property is preserved under multiplication of the columns by non-zero
scalars. So, consider the columns of $G$ as points $P_1,P_2,\ldots,P_k$ of
$\PG(m-1,q)$. It follows that $C$ is MDS if and only if 
$\{P_1,P_2,\ldots,P_k\}$ is a $k$-arc of $\PG(m-1,q)$.
\vspace*{-3mm}
\end{proof}

\begin{theorem}
{\rm (\cite{thaJ69a}; Kaneta and Maruta \cite{kanH89a})}
For $\PG(n,q),$
take $q$ odd  and $n \geq 3$.
\begin{enumerate}[\rm(i)]
\item If $\cK$ is a $k$-arc with 
$k>q- \qa\sqrt{q}+n- \qa,$
then $\cK$ lies on a unique normal rational curve.

\item If $q > (4n -5)^2,$ every $(q+1)$-arc is a normal rational curve.

\item If $q>(4n -9)^2,$ then $m(n,q) = q +1$.

\end {enumerate}
\end{theorem}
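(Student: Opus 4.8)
The plan is to reduce parts (ii) and (iii) to part (i) by a short numerical check, and to prove (i) by induction on the dimension $n$, the inductive step being carried out by projection from a point of the arc. For (ii): a $(q+1)$-arc $\cK$ of $\PG(n,q)$ has $q+1 > q - \qa\sqq + n - \qa$ precisely when $\sqq > 4n-5$, i.e.\ $q > (4n-5)^2$; so (i) applies, $\cK$ lies on a normal rational curve, and since a normal rational curve has exactly $q+1$ points, $\cK$ \emph{is} that curve. For (iii): a normal rational curve is a $(q+1)$-arc, so $m(n,q)\ge q+1$ always; and if a $(q+2)$-arc $\cK$ existed with $q>(4n-9)^2$ then $q+2 > q - \qa\sqq + n - \qa$ (equivalent to $\sqq > 4n-9$), so by (i) $\cK$ would lie on a normal rational curve having only $q+1$ points, which is absurd. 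Hence $m(n,q)=q+1$.

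For (i) I would induct on $n$. The base case $n=2$ is where the hypothesis ``$q$ odd'' enters: since $\sevf > \tws$, a $k$-arc of $\PG(2,q)$ with $k > q - \qa\sqq + \sevf$ satisfies the hypothesis of Theorem~\ref{thm2.4.4}(ii), so it extends to an oval, which for $q$ odd is a non-singular conic by Segre's theorem; a conic is the normal rational curve of $\PG(2,q)$, and since two distinct conics meet in at most four points while here $k>4$, it is the unique one. For the inductive step, suppose the result holds in dimension $n-1\ge 2$, let $\cK$ be a $k$-arc of $\PG(n,q)$ with $k > q - \qa\sqq + n - \qa$, and fix a point $P\in\cK$. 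Projecting $\cK$ from $P$ onto a hyperplane produces $k-1$ distinct points (three collinear points of $\cK$ being forbidden), no $n$ of which lie in a $\Pi_{n-2}$ (those $n$ points together with $P$ would put $n+1$ points of $\cK$ in a hyperplane); thus the projection $\cK_P$ is a $(k-1)$-arc of $\PG(n-1,q)$ with $k-1 > q - \qa\sqq + (n-1) - \qa$. By the induction hypothesis $\cK_P$ lies on a unique normal rational curve $\Gamma_P$, and therefore $\cK$ lies on the cone $T_P$ with vertex $P$ over $\Gamma_P$.

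The remaining step, and the main obstacle, is to \emph{lift}: to pass from ``$\cK\subseteq T_P$ for every $P\in\cK$'' to ``$\cK$ lies on a normal rational curve of $\PG(n,q)$''. When $n=3$ this is transparent: choosing a second point $Q\in\cK$, the two quadric cones $T_P$ and $T_Q$ intersect in the line $PQ$ together with a twisted cubic $\Gamma$ through $P$ and $Q$, and as $\cK$ is an arc it meets $PQ$ only in $\{P,Q\}$, so $\cK\subseteq\Gamma$. For general $n$ one cannot simply intersect two surfaces, and this is where the finer analysis of Thas and of Kaneta--Maruta is required: by iterating projections from points of $\cK$ and reconstructing the parametrisation of $\cK$ as a section of the cone $T_P$ --- equivalently, by a higher-dimensional version of Segre's lemma of tangents combined with a Hasse--Weil estimate for the auxiliary curve it produces --- one forces that section to be of normal-rational-curve type. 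The bound $k > q - \qa\sqq + n - \qa$ is exactly what is needed to keep sufficiently many points of $\cK$ through every projection and to make every normal rational curve that occurs unique; the uniqueness assertions rest on the fact that any $m+1$ points of a normal rational curve of $\PG(m,q)$ are linearly independent, so $m+3$ of its points are in general position and two distinct such curves share at most $m+2<k$ points. Granting the lifting step, parts (ii) and (iii) follow as above.
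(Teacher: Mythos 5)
Your reductions of (ii) and (iii) to (i) are correct and complete: the arithmetic ($q+1 > q - \qa\sqq + n - \qa$ precisely when $\sqq > 4n-5$, and $q+2 > q - \qa\sqq + n - \qa$ precisely when $\sqq > 4n-9$) checks out, and the counting arguments (a normal rational curve has exactly $q+1$ points; any arc of size at least $q+2$ contains a $(q+2)$-arc) finish those parts. The base case of (i) is also sound: since $\sevf>\tws$, Theorem~\ref{thm2.4.4}(ii) applies, Segre's theorem turns the oval into a conic, and ``two distinct conics meet in at most four points'' gives uniqueness; the projection step and the uniqueness argument via $n+3$ points in general position are likewise fine. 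Note that the paper itself gives no proof of this theorem --- it is quoted from \cite{thaJ69a} and \cite{kanH89a} --- so the relevant comparison is with those sources, whose overall strategy (induction on $n$, projection from a point of the arc, then lifting) your outline does reproduce.

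The genuine gap is the lifting step for $n\geq 4$, which is the entire mathematical content of part (i). You establish that $\cK$ lies on the cone $T_P$ for every $P\in\cK$, but the passage from this family of cones to a single normal rational curve is exactly what Thas and Kaneta--Maruta actually prove, and your proposal replaces it with a gesture (``a higher-dimensional version of Segre's lemma of tangents combined with a Hasse--Weil estimate'') that is not a proof and also mislocates the hard input: in the cited arguments the lemma of tangents and the Hasse--Weil-type bounds enter only through the planar extendability theorem used in the base case, while the lifting itself is an elementary argument --- one intersects $T_P$ and $T_Q$ for two points $P,Q\in\cK$ and shows, by comparing the two induced parametrisations on the $k-2$ common points, that the residual intersection through $\cK\setminus\{P,Q\}$ is a normal rational curve. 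Your $n=3$ computation (two quadric cones sharing the generator $PQ$, residual cubic forced by the arc condition to be irreducible and non-planar) is the right model, but for general $n$ you must still justify why the analogous residual intersection of two cones of degree $n-1$ is a non-degenerate curve of degree $n$ rather than something reducible or lower-dimensional; without that the induction does not close.
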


\begin{theorem}
{\rm(Bruen et al. \cite{bruA88c}; Blokhuis et al. \cite{bloA90};
Segre \cite{segB67}; Storme and Thas \cite{stoL93a})}
\begin{enumerate}[\rm(i)]
\item For $q$ even$,\ q \neq 2,\ n\geq 3,$ if $\cK$ is a $k$-arc in
$\PG(n,q)$ with
 \[
k >  q -\ha \sqq + n -\tqa,
\]
then $\cK$ lies on a unique $(q+1)$-arc of $\PG(n,q)$.

\item A $(q+1)$-arc in $\PG(n,q),\ q$ even and $n\geq 4,$ with
\[
q > (2n-\sha)^2,
\]
 is a  normal rational curve.

\item If $\cK$ is a $k$-arc in $\PG(n,q),\ q$ even and $n\geq 4,$ with
\[
q > (2n-\elha)^2,
\]
then $k\leq q+1.$
\end{enumerate}
\end{theorem}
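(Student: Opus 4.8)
The plan is to prove all three parts together by induction on $n$, the one recurring device being projection of the arc from a point of itself: if $\cK$ is a $k$-arc of $\PG(n,q)$ and $P\in\cK$, then projecting $\cK\bsl\{P\}$ from $P$ onto a hyperplane gives a $(k-1)$-arc $\cK_P$ of $\PG(n-1,q)$, since no three points of $\cK$ are collinear (so the projection is injective) and since $n$ points of $\cK_P$ lying in a $\Pi_{n-2}$ would force $P$ together with the corresponding $n+1$ points of $\cK$ to span only a hyperplane. Raising the dimension again is then a matter of reassembling the data obtained in $\PG(n-1,q)$, which is where all the real work lies; the base cases are $\PG(3,q)$ for (i) and $\PG(4,q)$ for (ii) and (iii).

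For (i) I would take base case $n=3$. There, $\cK_P$ is a $(k-1)$-arc of $\PG(2,q)$ with $k-1>q-\sqq+1$, so by Theorem~\ref{thm2.4.4}(i) it is contained in a hyperoval, which is unique because $k-1$ is large. If $Q$ extends $\cK$ then its image extends $\cK_P$ and hence lies on that hyperoval, so $Q$ lies on the cone over the hyperoval with vertex $P$, on one of the $q-k+3$ generators not already used by $\cK$; intersecting this constraint over all $P\in\cK$ and cross-checking the lift over two distinct vertices forces the set $\cE$ of points extending $\cK$ to have size exactly $q+1-k$ and $\cK\cup\cE$ to be a $(q+1)$-arc, which is then visibly the unique one through $\cK$. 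The inductive step $n\geq4$ is the same argument with ``hyperoval of $\PG(2,q)$'' replaced by ``the unique $(q+1)$-arc $\cC_P$ of $\PG(n-1,q)$ through $\cK_P$'' supplied by the inductive hypothesis, available because $k-1>q-\ha\sqq+(n-1)-\tqa$; the larger coefficient $\ha$ here (versus the $1$ of Theorem~\ref{thm2.4.4}) is exactly what is needed for the uniqueness, and hence the reassembly, to propagate.

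For (ii) I would again induct on $n$, now from base case $n=4$ (the restriction $n\geq4$ is essential: the Casse--Glynn arcs are $(q+1)$-arcs of $\PG(3,q)$, for suitable even $q$, that are not normal rational curves). The base case in $\PG(4,q)$ is the genuinely algebraic step: following Segre, attach to a $(q+1)$-arc an algebraic hypersurface of bounded degree built from its secant and tangent structure (the generalised lemma of tangents for $q$ even), and show that for $q$ large this forces the arc to be a normal rational curve. For $n\geq5$, project from $P\in\cK$ to a $q$-arc $\cK_P$ of $\PG(n-1,q)$: the hypothesis $q>(2n-\sha)^2$ is precisely $q>q-\ha\sqq+(n-1)-\tqa$, so part (i) in dimension $n-1$ puts $\cK_P$ on a unique $(q+1)$-arc $\cC_P$, and since $q>(2n-\sha)^2$ implies $q>(2(n-1)-\sha)^2=(2n-\elha)^2$ the inductive hypothesis identifies $\cC_P$ as a normal rational curve. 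It then remains to lift: a point set of $\PG(n,q)$ that projects from every one of its points onto a normal rational curve is itself a normal rational curve — concretely, choosing coordinates making $\cC_P$ the standard curve of that hyperplane pins down a normal-rational-curve parametrisation of $\cK$, with a second vertex removing the last ambiguity.

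For (iii), suppose $\cK$ is a $(q+2)$-arc of $\PG(n,q)$ with $n\geq4$ and $q>(2n-\elha)^2$, and argue for a contradiction. The base case $n=4$ is once more the crux and is algebraic: Segre's hypersurface and $q$-even lemma-of-tangents machinery, applied now to a hypothetical $(q+2)$-arc of $\PG(4,q)$, must be pushed to rule it out for $q$ large. For $n\geq5$ I would instead project $\cK$ from a carefully chosen $\Pi_{n-4}$ onto $\PG(3,q)$: the span of any four points of $\cK$ is a $\Pi_3$, generically skew to a $\Pi_{n-4}$, and an incidence count — bounding the $\Pi_{n-4}$'s meeting at least one of the $\binom{q+2}{4}$ such $\Pi_3$'s against the total number of $\Pi_{n-4}$'s — shows a suitable centre exists once $q>(2n-\elha)^2$; the image is then a $(q+2)$-arc of $\PG(3,q)$, contradicting the classical fact $m(3,q)=q+1$ for $q$ even. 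I expect the main obstacle to be exactly the $\PG(4,q)$ base cases of (ii) and (iii): the projection induction is soft and does all the dimension-raising, but the algebraic heart — Segre's associated hypersurfaces and the $q$-even lemma of tangents, together with the extremal estimates that yield the precise constants $\sha$ and $\elha$ — is where the difficulty sits.
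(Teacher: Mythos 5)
This is a survey: the paper states this theorem with citations to Bruen--Thas--Blokhuis, Blokhuis--Bruen--Thas and Storme--Thas and gives no proof, so your attempt can only be measured against the method of those references. Your overall architecture (induction on $n$ by projecting from a point of the arc, with the constants dropping by one per dimension, and an algebraic base case) is indeed the right shape for parts (ii) and (iii); the numerology you check, e.g.\ that $q>(2n-\sha)^2$ is exactly the hypothesis of (i) in $\PG(n-1,q)$ for a $q$-arc, is correct. But there are two genuine gaps.

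First, in part (i) you never produce the extension points. Intersecting the cones over the hyperovals $H_P\setminus\cK_P$ for two (or all) vertices $P\in\cK$ only \emph{localises} where a point extending $\cK$ could lie; it does not show that any exists, let alone that there are exactly $q+1-k$ of them forming an arc with $\cK$. Counting cannot rescue this: a typical point of $\PG(3,q)$ lies on roughly $\binom{k}{3}/q\approx q^2/6$ trisecant planes of $\cK$, so existence is not generic. In the odd-$q$ analogue (Kaneta--Maruta) the planar completion is a conic, the cones are quadrics, and Bézout-type intersection of two quadric cones supplies the lift; for $q$ even the completion is a hyperoval, which is not a low-degree algebraic curve, and that tool vanishes. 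This is precisely why Bruen--Thas--Blokhuis and Storme--Thas instead build the associated algebraic hypersurface of degree $t=q+n-1-k$ in the dual space (the generalised lemma of tangents for $q$ even) containing all tangent hyperplanes: for $t$ small relative to $\sqq$ this hypersurface must split off linear components defined over $\bF_q$, and those linear components \emph{are} the extension points. That idea, or a substitute for it, is missing, and without it the claim that ``the reassembly forces $|\cE|=q+1-k$'' is unsupported.

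Second, the inductive step you propose for (iii) fails. A $\Pi_{n-4}$ meets a fixed solid of $\PG(n,q)$ with probability about $1/q$, and there are $\binom{q+2}{4}\approx q^4/24$ solids spanned by $4$-subsets of a $(q+2)$-arc to avoid, so the union bound overshoots the total number of $(n-4)$-subspaces by a factor of order $q^3$; no counting argument yields a centre of projection skew to all of them, and no hypothesis of the form $q>(2n-\elha)^2$ can help. The reduction used in the literature is different: project the hypothetical $(q+2)$-arc from one of \emph{its own} points to get a $(q+1)$-arc of $\PG(n-1,q)$; the hypothesis $q>(2n-\elha)^2=(2(n-1)-\sha)^2$ is exactly what part (ii) needs in dimension $n-1$ to identify this projection as a normal rational curve (this is where the constant $\elha=\sha+2$ comes from), and one then derives a contradiction from the fact that a normal rational curve of $\PG(n-1,q)$, $n-1\geq 3$ and $q$ even, cannot arise as such a projection, i.e.\ cannot be extended -- the step that genuinely uses $n\geq 4$ and is false in the plane, where the conic extends by its nucleus. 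Your instinct that the $\PG(4,q)$ base cases carry the algebraic weight is sound, but as written both (i) and the dimension-lowering in (iii) are broken.
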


\begin{rem}
There are close relationships between $k$-arcs, algebraic curves and algebraic hypersurfaces.
\end{rem}

\section{Caps and ovoids}
\label{sec5}
\begin{defn}
\begin{enumerate}[(1)]

 \item In $\PG(n,q),\ n \geq 3$, a set $\cK$ of $k$ points no three of which
 are collinear is a $k$-\emph{cap}.

\item  A $k$-cap is \emph{complete} if it is not contained in a $(k+1)$-cap.

 \item A line of $\PG(n,q)$ is a \emph{secant}, \emph{tangent} or  \emph{external line} as 
it meets $\cK$ in $2,1$ or $0$ points.
 
\end{enumerate}

\end{defn}

\subsection{Caps and ovoids in $\PG(3,q)$}

\begin{theorem}
{\rm(\cite[Chapter 16]{hirJ85}; Bose \cite{bosR47}; Qvist \cite{qviB52})}
\label{thm3.2.1}
\begin{enumerate}[\rm(i)]
\item 
For a $k$-cap in $\PG(3,q)$ with $q\neq 2,$
\[
k\leq q^2 + 1.
\]

\item A $k$-cap in $\PG(3,2)$ has the bound  $k\leq 8;$ an $8$-cap is the complement 
of a plane.

\item Each elliptic quadric of $\PG(3,q)$ is a $(q^2 + 1)$-cap.
\end{enumerate}

\end{theorem}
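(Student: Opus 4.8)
The plan is to prove the three parts in turn, with (i) and (ii) resting on the same elementary counting and (iii) being a direct verification.

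For part (i) I would first record the crude bound. Fix a point $P\in\cK$. Since no three points of $\cK$ are collinear, each of the $q^2+q+1$ lines through $P$ contains at most one further point of $\cK$, so $k\le q^2+q+2$ for every $q$; in particular every point of $\cK$ lies on exactly $q^2+q+2-k$ tangent lines. This already yields the bound $k\le 8$ in part (ii), since $q^2+q+2=8$ when $q=2$. To sharpen the estimate for $q>2$, pick a secant line $\ell$, say through $A,B\in\cK$. The $q+1$ planes through $\ell$ cut $\cK$ in sets $\cK_0,\dots,\cK_q$, each an arc containing $A$ and $B$; since a point of $\cK$ not on $\ell$ lies in exactly one plane through $\ell$, the sets $\cK_i\setminus\{A,B\}$ partition $\cK\setminus\{A,B\}$, whence $k-2=\sum_{i=0}^{q}(|\cK_i|-2)$.

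When $q$ is odd, the bound $k'\le q+1$ for a $k'$-arc of $\PG(2,q)$ gives $|\cK_i|\le q+1$, so $k-2\le(q+1)(q-1)=q^2-1$ and $k\le q^2+1$; this finishes (i) for $q$ odd. When $q$ is even this only gives $|\cK_i|\le q+2$ and the weaker $k\le q^2+q+2$, and the substance of the statement is the refinement (due to Qvist) excluding $q^2+2\le k\le q^2+q+2$ for $q>2$. The plan here is to exploit that a near-extremal cap is highly constrained: if $k\ge q^2+2$ then $\sum_i\bigl((q+2)-|\cK_i|\bigr)=q(q+1)-(k-2)\le q$, so through any secant line at most $q$ of the $q+1$ plane sections fail to be hyperovals, and if $k=q^2+q+2$ every nonempty plane section is a hyperoval, so $\cK$ has no tangent line at all. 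Then every nonempty plane section has exactly $q+2$ points, and counting incident point--plane pairs forces the number of planes meeting $\cK$ to equal $(q^2+q+2)(q^2+q+1)/(q+2)$, which is an integer only if $q+2\mid 12$; for $q$ even this leaves only $q=2$ and the stray value $q=4$, the latter disposed of separately (the largest cap of $\PG(3,4)$ has $17=q^2+1$ points). The intermediate values $q^2+2\le k\le q^2+q+1$ for $q>2$ are then ruled out by a finer count of tangent lines through the points of $\cK$ in the plane sections, and I expect this analysis of the near-extremal $q$-even caps to be the main obstacle; everything else is routine.

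For the uniqueness in (ii), take $q=2$ and an $8$-cap $\cK$. By the crude count $\cK$ has no tangent line, so a plane meeting $\cK$ contains a point $P$ of $\cK$ whose three lines inside the plane each carry a second point of $\cK$, forcing exactly $4$ points; counting point--plane incidences, $8\cdot 7=4m$ gives $m=14$ planes meeting $\cK$, so exactly one of the $15$ planes of $\PG(3,2)$ is disjoint from $\cK$. That plane has $7$ points and is therefore all of $\PG(3,2)\setminus\cK$, so $\cK$ is the complement of a plane; conversely the complement of a plane is readily checked to be an $8$-cap. Finally, for (iii) an elliptic quadric $\cE$ of $\PG(3,q)$ is the zero set of a non-degenerate quadratic form of elliptic type: the standard point count gives $|\cE|=q^2+1$, and ellipticity means $\cE$ contains no line. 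Restricting the form to the $2$-dimensional subspace spanned by any line gives a nonzero binary quadratic form, which has at most two projective zeros, so every line meets $\cE$ in at most two points; hence $\cE$ is a $(q^2+1)$-cap, which by (i) is of maximal size for $q>2$.
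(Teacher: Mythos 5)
The paper itself offers no proof of this theorem --- it is quoted with references to Bose, Qvist and \cite[Chapter 16]{hirJ85} --- so your argument can only be measured against the classical proofs. Parts (ii) and (iii) of your proposal are correct and complete: the tangent count $q^2+q+2-k$ gives $k\le 8$ for $q=2$; the incidence count $8\cdot 7=4\cdot 14$ pins down a unique plane disjoint from an $8$-cap, whose $7$ points must then be exactly the complement; and the argument for the elliptic quadric (it contains no line, so the form restricted to any line is a nonzero binary quadratic form with at most two projective zeros) is standard. Part (i) for $q$ odd is also complete and is exactly Bose's argument: the $q+1$ planes through a secant each cut out an arc of size at most $q+1$, whence $k-2\le(q+1)(q-1)$.

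The genuine gap is part (i) for $q$ even, $q>2$, which is the entire content of Qvist's contribution and which you explicitly leave unfinished. Since any larger cap contains a $(q^2+2)$-cap, the real burden is to derive a contradiction from a $(q^2+2)$-cap; your sketch instead treats only the extreme value $k=q^2+q+2$. There the divisibility $q+2\mid 12$ does work, but the surviving case $q=4$ is asserted rather than proved --- a $22$-cap of $\PG(3,4)$ must still be excluded (this can be done: it would have no tangents, so every plane meeting it is a hyperoval plane; any point $R$ off the cap lies on $11$ secants, each in $5$ planes, while each hyperoval plane through $R$ carries exactly $3$ of them, forcing $3\mid 55$, a contradiction). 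More seriously, the range $q^2+2\le k\le q^2+q+1$ is deferred to ``a finer count of tangent lines'', but the linear counts you have set up (tangents per point, sizes of plane sections through a secant, tangents inside a plane section) are all mutually consistent for such caps and cannot by themselves produce a contradiction. The classical proof uses specifically even-characteristic geometry --- a $(q+1)$-arc section has a nucleus through which all $q+1$ of its tangents pass, hyperoval sections carry no tangents at all, and the number of tangents through a point off the cap has the parity of $k$ --- and none of this enters your outline. As written, the even case of (i) is a statement of intent rather than a proof.
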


\begin{defn}
A $(q^2 + 1)$-cap of $\PG(3,q),\ q\neq 2,$ is an \emph{ovoid}$;$ for $q=2,$
an {\em ovoid} is a set of $5$ points no $4$ of which are coplanar.
\end{defn}

\begin{theorem}
\label{thm3.2.2}
In $\PG(3,2),$ a complete cap is either an  ovoid$,$ which is an  elliptic quadric$,$ or an
$8$-cap$,$ which is the complement of a plane.
\end{theorem}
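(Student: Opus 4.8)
The plan is to work directly in $\PG(3,2)$, which has only $15$ points, $35$ lines and $15$ planes, and to use the combinatorial bounds of Theorem \ref{thm3.2.1} together with a short counting argument. First I would recall that by Theorem \ref{thm3.2.1}(ii) the maximal size of a cap is $8$, with the $8$-caps being exactly complements of planes, so the whole question is about complete caps of size $k\le 7$. Since an ovoid in $\PG(3,2)$ is by definition a set of $5$ points no $4$ coplanar, and such a set is easily checked to be projectively equivalent to an elliptic quadric (e.g.\ $\p(1,0,0,0),\p(0,1,0,0),\p(0,0,1,0),\p(0,0,0,1),\p(1,1,1,1)$, whose points satisfy $x_0x_1+x_2x_3=0$ after a suitable change of coordinates), it remains to show that a complete cap in $\PG(3,2)$ has size $5$ or $8$, and that a $5$-cap is automatically an ovoid.

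For the size constraint I would argue by a tangent/secant count. Let $\cK$ be a $k$-cap and fix $P\in\cK$. The $7$ lines through $P$ are partitioned into secants (through a second point of $\cK$) and tangents, so $P$ lies on exactly $k-1$ secants and $8-k$ tangents. Now take a point $Q\notin\cK$. Through $Q$ there pass $7$ lines; each meets $\cK$ in $0$, $1$ or $2$ points, and $\cK$ is complete precisely when no line through any external point is tangent — equivalently, every line meeting $\cK$ in exactly one point must have its other $2$ points also outside $\cK$, and completeness forbids adding $Q$, i.e.\ some line through $Q$ is a secant... more simply: $\cK$ is complete iff every point $Q\notin\cK$ lies on at least one secant of $\cK$. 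Counting pairs $(Q,\ell)$ with $Q\notin\cK$ and $\ell$ a secant through $Q$: there are $\binom{k}{2}$ secants, each containing $3-2=1$ point outside $\cK$, giving $\binom{k}{2}$ incidences, which must cover all $15-k$ external points, so $\binom{k}{2}\ge 15-k$, forcing $k\ge 5$. To push to the conclusion $k\in\{5,8\}$ I would examine the plane sections: a plane meets a cap in at most $4$ points (a $4$-arc of $\PG(2,2)$, which is a hyperoval) and the complement of a plane shows $4$ is attained; a careful double count of point--plane incidences, using that each plane meets $\cK$ in $0,\dots,4$ points and each point of $\cK$ lies on $7$ planes, together with the completeness condition, should eliminate $k=6$ and $k=7$ and show any $5$-cap has all its plane sections of size $\le 3$, hence no $4$ coplanar points, hence is an ovoid.

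The main obstacle will be ruling out $k=6$ and $k=7$ cleanly: here one must show that a $6$-cap or $7$-cap always has an external point lying on no secant (so it is not complete) or else extends. I expect this to follow from the plane-section count — for $k=7$, the $15$ planes would need their section sizes to sum (with multiplicity $7$ over the points) to $49$, and since no section exceeds $4$ while sections of size $4$ are hyperovals of $\PG(2,2)$ forcing strong coplanarity constraints, the arithmetic should be incompatible; a similar but tighter analysis handles $k=6$. Once the only complete sizes are $5$ and $8$, the classification is immediate: the $8$-caps are complements of planes by Theorem \ref{thm3.2.1}(ii), and the $5$-caps are ovoids and thus elliptic quadrics by the equivalence noted above.
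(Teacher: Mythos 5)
The paper states this result without proof, so there is nothing to compare against; judging your proposal on its own terms, the strategy (secant-covering bound plus plane-section counting in a $15$-point space) is the right one and can be completed, but the decisive step is left as a hope and the mechanism you predict for it is not the one that actually works. Your lower bound $\binom{k}{2}\ge 15-k$, hence $k\ge 5$ for a complete cap, is correct, as is the identification of $5$-point sets with no $4$ coplanar with frames and hence with elliptic quadrics. The gap is at $k=6,7$: you expect the first-moment count $\sum_i i\,n_i=7k$ (where $n_i$ is the number of planes meeting $\cK$ in $i$ points) to be ``incompatible'', but it is not --- $6$-caps and $7$-caps certainly exist (delete points from an $8$-cap), so no count can rule them out as caps; what must be shown is that they always extend. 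The first moment alone ($49\le 4\cdot 15$ for $k=7$) gives no contradiction and no information about completeness.

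What closes the argument is adding the second and third moments: each secant lies on $3$ planes and each (necessarily non-collinear) triple of cap points spans a unique plane, so $\sum_i\binom{i}{2}n_i=3\binom{k}{2}$ and $\sum_i\binom{i}{3}n_i=\binom{k}{3}$, together with $\sum_i n_i=15$ and $i\le 4$. For $k=7$ these force $(n_0,\dots,n_4)=(1,0,0,7,7)$ and for $k=6$ they force $(1,0,3,8,3)$; in both cases $n_0=1$, so the cap misses a plane entirely, lies in the complement of that plane (an $8$-cap by Theorem \ref{thm3.2.1}(ii)), and is therefore incomplete. The same system for $k=5$ has exactly two solutions, $(0,5,0,10,0)$ and $(1,1,6,6,1)$: the second again has $n_0=1$ and extends, while the first has $n_4=0$, i.e.\ no four points coplanar, which is your ovoid case; completeness of the ovoid then follows from your own secant count, since its $10$ secants carry $10$ pairwise distinct external points (two secants sharing an external point would put four cap points in a plane) and so cover all of them. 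With the pair and triple counts inserted, your outline becomes a complete proof; without them it does not conclude.
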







\begin{theorem}{\rm(\cite[Chapter 16]{hirJ85}; Barlotti \cite{barA55}; Panella \cite{panG55})}
\label{thm3.2.4}

In $\PG(3,q),\ q$ odd$,$ an ovoid is an elliptic quadric.

\end{theorem}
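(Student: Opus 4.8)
The plan is to coordinatise the ovoid as the graph of a function $f\colon\Fq^2\to\Fq$, to use Segre's theorem on ovals to force $f$ to be a polynomial of degree at most $2$, and then to identify the resulting quadric by a point count. First recall the basic structure of an ovoid $\cO$ of $\PG(3,q)$ (see \cite[Chapter 16]{hirJ85}). From $|\cO|=q^2+1$, the cap property, and the bound $k\le q+1$ for arcs of $\PG(2,q)$ with $q$ odd, one shows by a double count — over the planes through a fixed secant line, and then over the planes through a fixed point of $\cO$ — that every plane of $\PG(3,q)$ meets $\cO$ either in a single point or in a $(q+1)$-arc, that through each point of $\cO$ pass exactly $q+1$ tangent lines, and that these tangent lines span a unique tangent plane which meets $\cO$ only in that point. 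By Segre's theorem that every oval in $\PG(2,q)$, $q$ odd, is a non-singular conic, each $(q+1)$-arc section of $\cO$ is then a conic, and the task is to fit all these conics onto a single quadric.

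Fix $P_\infty\in\cO$ and choose projective coordinates so that $P_\infty=\p(0,0,0,1)$ and the tangent plane at $P_\infty$ is $X_2=0$. In the affine chart $X_2=1$, with affine coordinates $(u,v,w)$ for $\p(u,v,1,w)$, the plane $X_2=0$ is the plane at infinity and $P_\infty$ is its point in the ``vertical'' $w$-direction. Since $\cO$ is a cap, each vertical line through $P_\infty$ carries at most one further point of $\cO$; as the tangent plane at $P_\infty$ meets $\cO$ only in $P_\infty$, all $q^2$ other points of $\cO$ lie in the affine chart and are spread one per vertical line. Hence $\cO=\{\p(u,v,1,f(u,v))\mid u,v\in\Fq\}\cup\{P_\infty\}$ for a unique $f\colon\Fq^2\to\Fq$, which after a translation in $w$ we normalise to have $f(0,0)=0$.

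Now let $L$ be any affine line of the $(u,v)$-plane and $\Pi_L$ the plane it spans together with the vertical direction. Then $\Pi_L\cap\cO=\{(u,v,f(u,v))\mid(u,v)\in L\}\cup\{P_\infty\}$ is a $(q+1)$-arc, hence a conic; its only point on the line at infinity of $\Pi_L$ is $P_\infty$, because $\cO$ meets $X_2=0$ only there, so this conic is tangent to the line at infinity and is therefore the graph of a quadratic in affine coordinates of $\Pi_L$. Thus the restriction of $f$ to each line $L$, parametrised linearly, is a polynomial of degree exactly $2$. Writing $f$ as a polynomial of degree at most $q-1$ in each of $u$ and $v$, the restrictions to the lines $v=v_0$, then to the lines $u=u_0$, then to the lines of each fixed non-zero slope (using $q\ge3$, which holds since $q$ is odd) successively force the coefficients of every monomial of $u$-degree at least $3$, then of $v$-degree at least $3$, and finally of $u^2v$, $uv^2$, $u^2v^2$ to vanish, leaving $f(u,v)=\alpha u^2+\beta uv+\gamma v^2+\delta u+\varepsilon v$. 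Homogenising, every point of $\cO$ satisfies
\[
\alpha X_0^2+\beta X_0X_1+\gamma X_1^2+\delta X_0X_2+\varepsilon X_1X_2-X_2X_3=0,
\]
so $\cO$ lies on a quadric $Q$. I expect this passage from ``every plane section is a conic'' to ``$f$ is a bivariate quadratic'' to be the main obstacle: the degree-two restriction to a single line is immediate once the parabola picture is in place, but combining enough lines — or, if necessary, bringing in the non-vertical secant sections $w=\lambda u+\mu v+\nu$ as well — and carrying out the interpolation over $\Fq$, including the small odd values of $q$, needs care.

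Finally, $Q$ contains at least $q^2+1$ points. By the classification of quadrics in $\PG(3,q)$, the possible numbers of points are $q^2+1$ for an elliptic quadric, $(q+1)^2$ for a hyperbolic quadric, $q^2+q+1$ for a quadric cone, and $q+1$, $q^2+q+1$ or $2q^2+q+1$ for the degenerate quadrics; so a quadric with exactly $q^2+1$ points must be elliptic. Hence $Q$ is an elliptic quadric, and since $|Q|=q^2+1=|\cO|$ we get $\cO=Q$, which completes the proof.
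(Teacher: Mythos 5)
The paper states this theorem without proof, citing Barlotti, Panella and Chapter~16 of \cite{hirJ85}; the classical argument there also starts from Segre's theorem, but then works with the pencil of conics cut by the planes through a fixed secant, shows that two such conics (whose tangents at the two common points lie in the two tangent planes of the ovoid) determine a unique quadric, and finally proves every remaining point of the ovoid lies on it. Your route --- coordinatising $\cO\setminus\{P_\infty\}$ as the graph of $f\colon\Fq^2\to\Fq$ over the tangent plane at $P_\infty$ and recovering $f$ by interpolation from its restrictions to lines --- is genuinely different and is viable: the graph structure, the fact that each non-tangent plane through $P_\infty$ cuts a conic tangent to the line at infinity (hence a parabola $w=at^2+bt+c$, $a\neq0$), and the interpolation for $q\ge 5$ (restrictions to $u=u_0$, $v=v_0$ kill all monomials of degree $\geq 3$ in either variable since a polynomial of degree $\le q-1$ vanishing on all of $\Fq$ is zero; restrictions to $v=mu+k$ then kill $u^2v^2,u^2v,uv^2$ because the composite has degree $\le 4\le q-1$) all check out.

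Two points need repair. First, the interpolation genuinely fails for $q=3$: there $f(u,mu+k)$ has degree up to $4>q-1$, so agreement with a quadratic on all of ${\bf F}_3$ only gives a congruence modulo $u^3-u$, and the surviving conditions do not force $c_{21}=c_{12}=c_{22}=0$. You flag this, but it must actually be closed by a separate argument for $\PG(3,3)$ (direct classification, or the Barlotti pencil argument, which is uniform in odd $q$). Second, the last step is wrong as written: you only know $|Q|\ge q^2+1$, and the hyperbolic quadric, the cone and the plane pair all have \emph{more} than $q^2+1$ points, so the point count alone does not exclude $\cO$ being a proper subset of one of them. What excludes them is the cap property: each of those quadrics is a union of at most $2(q+1)$ lines (plus possibly a vertex or contained in two planes), each line meets $\cO$ in at most two points and each plane in at most $q+1$, so none can contain $q^2+1$ points no three collinear once $q\ge 3$. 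Hence $Q$ is elliptic, $|Q|=q^2+1=|\cO|$ and $\cO=Q$. With these two repairs the proof is complete.
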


\begin{theorem}{\rm (Brown  \cite{broM00b})}
\label{thm3.2.5}
In $\PG(3,q),\ q$ even$,$ an ovoid containing at least one conic section is 
an elliptic quadric.

\end{theorem}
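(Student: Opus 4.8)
The plan is to use the nucleus of the conic section together with the tangent planes of the ovoid, and to bootstrap from a single conic section to a whole pencil of conic sections covering the ovoid. Write $\cO$ for the ovoid and let $\pi$ be a plane with $C:=\cO\cap\pi$ a conic; since a plane meets an ovoid either in one point or in an oval, $C$ is a $(q+1)$-arc of $\pi$, and as $q$ is even it has a nucleus $N\in\pi$, so that $C\cup\{N\}$ is a hyperoval of $\pi$, every one of the $q+1$ lines of $\pi$ through $N$ is a tangent of $C$, and $N$ lies on no secant of $C$. The first observation is that $N$ lies on the tangent plane $\pi_P$ of $\cO$ at every point $P$ of $C$: indeed $\pi_P\cap\pi$ is the tangent line of $C$ at $P$, which passes through $N$. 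Hence, for any two points $P_1,P_2\in C$, the line $g:=\pi_{P_1}\cap\pi_{P_2}$ passes through $N$, is external to $\cO$ (being contained in two tangent planes), and meets $\pi$ only in $N$; since $N$ is on no secant of $C$, the lines $g$ and $P_1P_2$ are skew.

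Now fix a secant line $m=P_1P_2$ of $C$ in $\pi$ and consider the pencil of $q+1$ planes on $m$. One of them is $\pi$; each of the remaining $q$ planes $\rho$ is not a tangent plane, so $D_\rho:=\rho\cap\cO$ is an oval through $P_1$ and $P_2$, and its tangent lines at $P_1,P_2$ are $\pi_{P_1}\cap\rho$ and $\pi_{P_2}\cap\rho$. Since the nucleus of an oval (for $q$ even) lies on every tangent line, the nucleus of $D_\rho$ is the point $g\cap\rho$, where $g=\pi_{P_1}\cap\pi_{P_2}$ is the fixed external line above. Thus $\cO$ is the union of the conic $C$ and these $q$ ovals, any two of the $q+1$ ovals meeting exactly in $\{P_1,P_2\}$, and all of their nuclei lie on the single external line $g$. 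The crux of the argument, and the step I expect to be the main obstacle, is to promote this to: \emph{every} $D_\rho$ is itself a conic. I would establish this by bringing in a third (and then a general) point $P_3\in C$, comparing the data cut out on each $\rho$ by the tangent planes $\pi_{P_3}$ and by $C$, and invoking even-characteristic oval theory together with the fundamental theorem on pencils of conics (equivalently Segre's lemma of tangents): the constraints that $C$ be a conic and that all the nuclei lie on $g$ should force each oval section through $m$ to satisfy the same type of quadratic relation.

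Granting this, $\cO$ is covered by a pencil of conics lying in the planes through the secant line $m$, all passing through $P_1$ and $P_2$. Each such conic is the zero set, inside its plane, of a nondegenerate quadratic form, and as the plane runs through the pencil these forms vary in a one-parameter family; matching them along the common points $P_1,P_2$ and using that any two of the conics are disjoint apart from $\{P_1,P_2\}$ forces the family to come from a single quadratic form $Q$ on $\PG(3,q)$ whose restriction to each plane of the pencil is nondegenerate. Hence $Q=0$ is a nonsingular quadric containing $\cO$; it contains no line (a line would appear in some plane of the pencil as part of a degenerate section), so it is an elliptic quadric, and comparing cardinalities ($q^2+1$ on each side) gives $\cO=\{Q=0\}$. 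The finitely many small values of $q$ excluded by the generic argument are handled separately using the known classification of ovoids and of ovals for those orders.
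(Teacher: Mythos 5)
The survey states Brown's theorem with only a citation and gives no proof, so there is nothing in the paper to compare against; judged on its own, your sketch breaks down at exactly the step you yourself flag as "the main obstacle", and that step is not a technical loose end but the entire content of the theorem. Everything you establish before it is correct but also completely generic: for \emph{any} ovoid of $\PG(3,q)$, $q$ even, and any two of its points $P_1,P_2$, the line $g=\pi_{P_1}\cap\pi_{P_2}$ is external, every non-tangent plane $\rho$ through $P_1P_2$ cuts an oval $D_\rho$ whose nucleus is $g\cap\rho$, and all these nuclei are collinear on $g$. The Tits ovoids realise precisely this configuration with every $D_\rho$ a translation oval that is not a conic. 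Hence the only non-generic input you hold is that \emph{one} member of the pencil (the section by $\pi$ itself) is a conic, and the claim that "even-characteristic oval theory together with the fundamental theorem on pencils of conics (equivalently Segre's lemma of tangents)" then forces the other $q$ sections to be conics is an appeal to tools that do not do this: Segre's lemma of tangents is a statement about a single arc inside a single plane, and there is no soft mechanism that propagates "conic" from one plane of the pencil to the others. Establishing such a propagation is what Brown's paper is about, and his argument is a substantial analysis of the compatibility of the oval sections, not a formal consequence of the nucleus configuration.

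Two further points. First, even granting that all sections through $m$ are conics, the conclusion "elliptic quadric" is a separate, previously known but nontrivial theorem (ovoids with a pencil of conics), so your final paragraph is at best an appeal to that result rather than a proof of it; the gluing of the planar quadratic forms into one quadric needs an actual argument. Second, the closing sentence deferring "finitely many small values of $q$" to the known classifications is unsupported: nothing in your argument identifies which $q$ are exceptional or why a generic argument would exclude them. As it stands the proposal reduces the theorem to an unproved assertion that is essentially equivalent to the theorem itself.
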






 \begin{theorem}{\rm(Tits \cite{titJ62a})}
\label{thm3.2.8}
\begin{enumerate}[\rm(i)]
\item For $q = 2^{2e+1},\ e\geq 1,$ the space $\PG(3,q)$ has ovoids that
are not elliptic quadrics. These are the \emph{Tits ovoids}.

\item 
With $q = 2^{2e+1},$ the canonical form of a Tits ovoid $\cO$  is the following$:$
\[
\cO = \{\p(1,z,y,x)\mid z = xy + x^{\gs + 2} + y^\gs\}\cup \{\p(0,1,0,0)\},
\]
where $\gs$ is the automorphism $t\mapsto t^{2^{e+1}}$ of $\Fq$.


\end{enumerate}

\end{theorem}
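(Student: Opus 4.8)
The plan is to verify the displayed set $\cO$ directly — that it is an ovoid and that it is not an elliptic quadric — which proves the existence claim (i) and at the same time exhibits the canonical form (ii). By Theorem~\ref{thm3.2.4} a non-quadric ovoid can occur only in even characteristic, so nothing need be said for $q$ odd. For $q=2^{2e+1}$ with $e\ge1$ the arithmetic facts used throughout are that $\gcd(e+1,2e+1)=1$, so $\gs$ generates $\mathrm{Gal}(\Fq/\bF_2)$, its fixed field is $\bF_2$, and $\gs^2$ is the Frobenius map $t\mapsto t^2$ (i.e. $t^{\gs^2}=t^2$ for all $t$); and also $\gcd\bigl(2(2^{e}-1),q-1\bigr)=1$. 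Counting first: for each $(x,y)\in\Fq^2$ the relation determines $z$ uniquely, so $\cO$ consists of $q^2$ affine points together with $\p(0,1,0,0)$, whence $|\cO|=q^2+1$; it therefore suffices to show $\cO$ is a cap, since a $(q^2+1)$-cap of $\PG(3,q)$, $q\ne2$, is by definition an ovoid (and $q^2+1$ is the maximum possible size, Theorem~\ref{thm3.2.1}(i)).

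Write $P_\infty=\p(0,1,0,0)$ and $f(x,y)=xy+x^{\gs+2}+y^\gs$. A line through $P_\infty$ meets the affine part in the points $\p(1,w,y_0,x_0)$, $w\in\Fq$, with $(x_0,y_0)$ fixed, and $\cO$ contains exactly the one with $w=f(x_0,y_0)$; so no such line carries three points of $\cO$. For three distinct affine points $P_i=\p(1,z_i,y_i,x_i)$ with $z_i=f(x_i,y_i)$, a rank computation on the coordinate matrix shows collinearity is equivalent to the existence of $\mu\notin\{0,1\}$ with $x_3=x_1+\mu a$, $y_3=y_1+\mu b$, $z_3=z_1+\mu(z_2-z_1)$, where $a=x_2-x_1$, $b=y_2-y_1$, $(a,b)\ne(0,0)$. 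Expanding $f(x_1+a,y_1+b)-f(x_1,y_1)$ via additivity of $\gs$ and of squaring together with $(x+a)^{\gs+2}=(x^\gs+a^\gs)(x^2+a^2)$, the conditions $z_2-z_1=f(x_1+a,y_1+b)-f(x_1,y_1)$ and $z_3-z_1=\mu(z_2-z_1)$ collapse (the terms in $y_1$ cancelling) to the single equation
\[
(\mu^2+\mu)a^2x_1^{\gs}+(\mu^\gs+\mu)a^\gs x_1^{2}=(\mu^2+\mu)ab+(\mu^\gs+\mu)b^\gs+(\mu^{\gs+2}+\mu)a^{\gs+2}.
\]
Thus $\cO$ is a cap if and only if this has no solution with $(a,b)\ne(0,0)$ and $\mu\notin\{0,1\}$.

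If $a=0$ the equation reads $(\mu^\gs+\mu)b^\gs=0$ with $b\ne0$, impossible since the fixed field of $\gs$ is $\bF_2$. If $a\ne0$, the left-hand side is an $\bF_2$-linear map $L$ of $\Fq$ in the variable $x_1$ with both coefficients nonzero; since $x\mapsto x^{\gs}x^{-2}$ is a bijection of $\bF_q^{\ast}$ (this is where $\gcd(2(2^{e}-1),q-1)=1$ enters), $\ker L$ is $1$-dimensional over $\bF_2$, so $\operatorname{Im}L$ has index $2$, and it remains to show that the specific right-hand side $\gamma$ of the displayed equation lies outside $\operatorname{Im}L$. I expect this to be the main obstacle: the cleanest route is to identify $(\operatorname{Im}L)^{\perp}$ for the trace form $\mathrm{Tr}_{\Fq/\bF_2}$ — a one-dimensional space, spanned by some $\delta$ expressible through $\mu$ and $a$ — and to verify $\mathrm{Tr}(\gamma\delta)\ne0$, a computation in which the relation $\gs^2=2$ is used decisively. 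Granting this, no three points of $\cO$ are collinear, so $\cO$ is a cap and hence an ovoid.

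Finally, $\cO$ is not an elliptic quadric. Intersecting $\cO$ with the plane $X_2=0$ yields the section $\{\p(1,x^{\gs+2},0,x)\mid x\in\Fq\}\cup\{P_\infty\}$, a set of $q+1$ points of the plane with coordinates $(X_0:X_1:X_3)$. If a quadratic form $AX_0^2+BX_1^2+CX_3^2+DX_0X_1+EX_0X_3+FX_1X_3$ vanished on all of these, evaluation at $P_\infty$ gives $B=0$ and evaluation at the affine points gives $A+Ex+Cx^{2}+Dx^{\gs+2}+Fx^{\gs+3}=0$ for all $x\in\Fq$; since $e\ge1$ the exponents $0,1,2,\gs+2,\gs+3$ are distinct and all less than $q$, so this polynomial vanishes identically and every coefficient is $0$. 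Hence the section lies on no conic, whereas the intersection of an elliptic quadric with a plane is always a conic. Therefore $\cO$ is not an elliptic quadric, and (i) and (ii) follow.
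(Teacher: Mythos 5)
The paper offers no proof of this result---it is a survey statement cited directly to Tits \cite{titJ62a}---so there is no internal argument to compare against; your attempt must therefore stand on its own, and it does not yet do so. The framework is sound and most of the computations check out: the count $|\cO|=q^2+1$, the reduction of collinearity of three affine points to the displayed additive identity in $x_1$ (the $x_1b$ and $ay_1$ terms do cancel), the disposal of the case $a=0$ via the fact that the fixed field of $\gs$ is $\bF_2$, the observation that $\gcd(2(2^e-1),q-1)=1$ forces $\ker L$ to have order $2$ and hence $\operatorname{Im}L$ to have index $2$, and the final plane-section argument showing the $y=0$ section $\{\p(1,x^{\gs+2},0,x)\}\cup\{P_\infty\}$ lies on no conic, whence $\cO$ is not an elliptic quadric. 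But the decisive step---that the right-hand side $\g=(\mu^2+\mu)ab+(\mu^\gs+\mu)b^\gs+(\mu^{\gs+2}+\mu)a^{\gs+2}$ avoids $\operatorname{Im}L$ for every $b\in\Fq$ and every $\mu\notin\{0,1\}$---is exactly the point at which you write ``I expect this to be the main obstacle'' and then proceed by ``granting this.'' That step \emph{is} the theorem: it is the algebraic content of the ovoid property (equivalently, the fact that the associated functions are o-polynomials for the Suzuki--Tits situation), and nothing before it is difficult. A proof that defers it proves nothing.

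The trace-form strategy you sketch is viable, and one reassuring consistency check can be made explicit: the unique $\delta\neq0$ with $\mathrm{Tr}(\delta\,L(x))=0$ for all $x$, determined by $(\alpha\delta)^\gs=\beta\delta$ with $\alpha=(\mu^2+\mu)a^2$, $\beta=(\mu^\gs+\mu)a^\gs$, automatically satisfies $\mathrm{Tr}\bigl(\delta\,[(\mu^2+\mu)ab+(\mu^\gs+\mu)b^\gs]\bigr)=0$ for all $b$ (the two adjoint conditions coincide after using $\gs^2=2$), so the requirement collapses to the single identity $\mathrm{Tr}\bigl(\delta\,(\mu^{\gs+2}+\mu)a^{\gs+2}\bigr)=1$ with $\delta$ pinned down by $\delta^{\gs-1}=\beta/\alpha^\gs$. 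Had this consistency failed, your whole reduction would have been wrong; since it holds, the remaining obligation is a genuine, nontrivial trace identity in $\mu$ (after normalising $a=1$) that must actually be verified---this is where the specific exponent $\gs+2$ and the relation $\gs^2=2$ do their work. Until that identity is established (or the non-collinearity is obtained another way, e.g.\ from the $2$-transitive action of the Suzuki group $Sz(q)$ on the $q^2+1$ points), the proof of part (i) is incomplete.
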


\begin{rem}
\label{rem3.2.9}
\begin{enumerate}[\rm(1)]
\item For $q$ even$,$ the only ovoids known are the elliptic quadrics and the
Tits ovoids.

\item For $q=4$, an ovoid of
$\PG(3,4)$ is an elliptic quadric; see Barlotti \cite{barA55} or \cite{hirJ85}.

\item For $q=8$, an ovoid is an elliptic quadric or a Tits ovoid; see Segre
\cite{segB59b} and Fellegara \cite{felG62}.

\item For $q=16,32$, all ovoids were determined by O'Keefe, Penttila and Royle; see 
\cite{okeC90b}, \cite{okeC92c}, \cite{okeC94b}, \cite{hirJ15}. 

\item For $q=64$, an ovoid of $\PG(3,64)$ is an elliptic quadric; see
Penttila \cite{penT22}.
\end{enumerate}
\end{rem}

\begin{rem}
For the influence and many applications of the paper of Tits \cite{titJ62a}, see the recent paper 
\cite{thaJ23} by Thas and van Maldeghem.  

\end{rem}

\noindent
{\bf Open problem 2.} Determine all ovoids in $\PG(3,q)$ for $q$ even.

\subsection{Caps in $\PG(n,q),\ n\geq 3$}

\begin{defn}
$m_2(n,q)$ is the maximum size of a $k$-cap in $\PG(n,q)$.
\end{defn}

\begin{theorem}\label{t:6.5} {\rm (Hill \cite{hilR78a})}
\begin{enumerate}[\rm(i)]
\item  
$m_2(n, q) \leq qm_2(n - 1, q) - (q + 1), \mbox{for} \ n \geq 4, q > 2.$
\item 
$m_2(n, q) \leq q^{n - 4}m_2(4, q) - q^{n - 4} - 2\,{\frac{q^{n - 4} - 1}{q - 1}} + 1,\ \mbox{for}\ n \geq 5, q > 2.$
\end{enumerate}
\end{theorem}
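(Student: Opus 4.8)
I would first dispose of part~(ii), which is a formal consequence of part~(i). Applying (i) in turn to $\PG(n,q),\PG(n-1,q),\ldots,\PG(5,q)$ --- that is, $n-4$ times --- gives
\[
m_2(n,q)\le q^{n-4}m_2(4,q)-(q+1)\sum_{j=0}^{n-5}q^{j}=q^{n-4}m_2(4,q)-(q+1)\,\frac{q^{n-4}-1}{q-1},
\]
and since $(q+1)\dfrac{q^{n-4}-1}{q-1}=(q^{n-4}-1)\dfrac{q+1}{q-1}=q^{n-4}+2\,\dfrac{q^{n-4}-1}{q-1}-1$, this is exactly the stated inequality. So the content is all in part~(i).

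For part~(i) the plan is a pencil count. Let $\cK$ be a $k$-cap of $\PG(n,q)$ with $k=m_2(n,q)$ (hence complete), and put $m=m_2(n-1,q)$. I would fix an $(n-2)$-dimensional subspace $\Pi$; the $q+1$ hyperplanes $H_0,\ldots,H_q$ through $\Pi$ partition $\PG(n,q)\setminus\Pi$, so intersecting with $\cK$ yields the identity
\[
k=\sum_{i=0}^{q}|\cK\cap H_i|-q\,|\cK\cap\Pi|.
\]
Since each $H_i$ is a $\PG(n-1,q)$ and $\cK\cap H_i$ is a cap in it, $|\cK\cap H_i|\le m$, so $m_2(n,q)=k\le(q+1)m-q\,|\cK\cap\Pi|$. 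Hence it would be enough to exhibit an $(n-2)$-space $\Pi$ with $q\,|\cK\cap\Pi|\ge m+q+1$, that is, with $|\cK\cap\Pi|$ of size about $m/q$ and a little to spare.

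The main obstacle is exactly producing such a $\Pi$ with the sharp constant. I would pick a hyperplane $H_0$ with $|\cK\cap H_0|$ maximal (so $\le m$), work inside $H_0\cong\PG(n-1,q)$ --- this is where $n\ge4$ is used, so that $n-1\ge3$, plane sections of $\cK$ are arcs of at most $q+2$ points, and the value $m_2(3,q)=q^2+1$ of Theorem~\ref{thm3.2.1} is available --- and then descend to an $(n-2)$-space meeting $\cK\cap H_0$ in at least its average number of points, of order $|\cK\cap H_0|/q$. Plain averaging loses one or two units (it yields only bounds of the shape $m_2(n,q)\le qm-(q-1)$ or $m_2(n,q)\le(q+1)m-q(q+1)$), and closing that gap --- as Hill does --- requires a careful look at the extremal configurations of hyperplane- and $(n-2)$-space sections, notably the case where the richest hyperplane section is itself a maximal cap of $\PG(n-1,q)$ all of whose hyperplane sections are small (as for an elliptic quadric when $n=4$, whose plane sections are conics of only $q+1$ points). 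That bookkeeping is the real work; granting it, the displayed identity finishes part~(i), and part~(ii) follows by the iteration above.
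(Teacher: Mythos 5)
Your reduction of (ii) to (i) is correct and complete: iterating (i) $n-4$ times gives $m_2(n,q)\le q^{n-4}m_2(4,q)-(q+1)(q^{n-4}-1)/(q-1)$, and your algebraic identity converting this into the stated form checks out. Note, however, that the survey you are being compared against gives no proof at all of this theorem --- it cites Hill and records (in the remark immediately following) that the result was obtained ``using the theory of cap-codes'', i.e.\ via the correspondence between $k$-caps and projective $[k,n+1]_q$ codes whose dual codes have minimum distance at least $4$. So Hill's actual route is coding-theoretic, not the purely geometric pencil count you set up.

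For part (i) your argument has a genuine gap, and it is more than the ``bookkeeping'' you defer: the sufficient condition you reduce to is unattainable. You need an $(n-2)$-space $\Pi$ with $q\,|\cK\cap\Pi|\ge m_2(n-1,q)+q+1$. Already in the base case $n=4$, $q$ odd, this requires a plane meeting $\cK$ in at least $(q^2+1+q+1)/q$, hence at least $q+2$, points; but a plane section of a cap is a plane arc, which for $q$ odd has at most $q+1$ points by Theorem 3.2(ii). The same tightness recurs at every level (for $n=5$ you would need a solid section of size forced up to $q^2+1$ while simultaneously $m_2(4,q)$ attains its own upper bound). So no choice of $\Pi$ can make your displayed inequality $k\le(q+1)m-q\,|\cK\cap\Pi|$ close on its own: one must instead trade off a deficit in $\sum_i|\cK\cap H_i|$ below $(q+1)m$ against the size of $|\cK\cap\Pi|$, and quantifying that trade-off is precisely the content of Hill's theorem, which your proposal does not supply. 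As it stands, part (i) is not proved.
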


\begin{rem}
These results were obtained using the theory of cap-codes.
\end{rem}

Exact values of $m_2(n, q)$ are known in just a few cases.

\begin{theorem}\label{t:7.1}
\begin{enumerate}[\rm(i)]
\item {\rm (Bose \cite{bosR47})} $m_2(n, 2) = 2^n;$ a $2^n$-cap of $\PG(n, 2)$ is the complement of a hyperplane.

\item  {\rm (Pellegrino \cite{pelG70})} $m_2(4, 3) = 20;$ there are nine projectively distinct $20$-caps in $\PG(4, 3)$.

\item  {\rm (Hill \cite{hilR73})} $m_2(5, 3) = 56;$ the $56$-cap in $\PG(5, 3)$ is projectively unique.

\item  {\rm (Edel and Bierbrauer \cite{edeY99})} $m_2(4, 4) = 41;$ there exist two projectively distinct $41$-caps in $\PG(4, 4)$.
\end{enumerate}
\end{theorem}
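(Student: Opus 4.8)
Part~(i) I would prove directly. For the bound, fix $P_0\in\cK$: the lines of $\PG(n,2)$ through $P_0$ partition the remaining $2^{n+1}-2$ points into $2^n-1$ two-point lines, and since $\cK$ is a cap each such line carries at most one further point of $\cK$, so $|\cK|\le 2^n$. For the extremal case, let $|\cK|=2^n$ and, for a point $P\notin\cK$, write $s_P,t_P,e_P$ for the numbers of secant, tangent and external lines through $P$; then $t_P+s_P+e_P=2^n-1$ and $t_P+2s_P=2^n$. Summing $s_P$ over the $2^n-1$ points off $\cK$ gives $\sum_P s_P=\binom{2^n}{2}$, because each secant (three points, exactly two in $\cK$) meets the complement in a single point; hence the average of $s_P$ equals $2^{n-1}$, while $s_P\le 2^{n-1}$ as $t_P\ge 0$, so $s_P=2^{n-1}$ and $t_P=0$ for every $P\notin\cK$. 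Thus the complement of $\cK$ contains no point lying on a tangent, which forces it to be closed under the operation ``third point of a line''; identifying $\PG(n,2)$ with the nonzero vectors of $V(n+1,2)$, on which this operation is $v,w\mapsto v+w$, the complement together with $0$ is a subgroup of order $2^n$, i.e. a hyperplane. Conversely, off a hyperplane $\ker\ell$ any two points $v,w$ satisfy $\ell(v+w)=0$, so their line re-enters the hyperplane and the complement is a cap.

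Parts~(ii)--(iv) are deep, and I would anchor them on Theorem~\ref{t:6.5} together with $m_2(3,q)=q^2+1$ for $q=3,4$, which is immediate from Theorem~\ref{thm3.2.1} because the elliptic quadric attains the bound. First, (iii) reduces to (ii): Theorem~\ref{t:6.5}(i) gives $m_2(5,3)\le 3\,m_2(4,3)-4=3\cdot 20-4=56$, Hill's ternary $56$-cap (from the projective two-weight $[56,6]_3$ code) attains it, so $m_2(5,3)=56$, and projective uniqueness I would obtain by examining the hyperplane sections of a hypothetical $56$-cap, each of which must be one of the $20$-caps classified in (ii), and showing their mutual incidences admit a single pattern. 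The heart of the matter is therefore (ii), where Theorem~\ref{t:6.5}(i) only yields the weak bound $m_2(4,3)\le 26$. To reach $20$ I would study the hyperplane sections of a $k$-cap $\cK$ in $\PG(4,3)$: each is a cap of $\PG(3,3)$, hence has at most $10$ points, with equality only for an elliptic quadric (a $10$-cap is an ovoid, so Theorem~\ref{thm3.2.4} applies, and all elliptic quadrics are projectively equivalent). With $a_i$ the number of hyperplanes meeting $\cK$ in $i$ points, the standard equations $\sum a_i=121$, $\sum i\,a_i=40k$, $\sum\binom{i}{2}a_i=13\binom{k}{2}$ (plus the pencil count over the $13$ hyperplanes through a line, and the constraint $i\le 10$) restrict $k$, and a detailed analysis of how $10$-sections and near-maximal sections can be glued --- using that two distinct elliptic quadrics of $\PG(3,3)$ share only few points --- forces $k\le 20$; carried to its end, the same bookkeeping enumerates the extremal caps, giving the nine projective classes.

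For (iv) the outline is the same but the combinatorics does not close by hand. I would again pass to hyperplane sections, now caps of $\PG(3,4)$ of size at most $17$ with elliptic quadrics at the top (for $q=4$ every ovoid of $\PG(3,4)$ is an elliptic quadric; see Remark~\ref{rem3.2.9}), set up the intersection equations over the $341$ hyperplanes, $85$ through each point and $21$ through each line, and use the known classification of the largest caps of $\PG(3,4)$ as seeds for a point-by-point backtracking search in $\PG(4,4)$ with isomorph rejection. This is essentially the Edel--Bierbrauer computation: it shows $41$-caps exist, fall into exactly two projective classes, and admit no extension, whence $m_2(4,4)=41$.

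The step I expect to be the real obstacle is precisely this passage, in dimension~$4$, from the clean recursive estimates of Theorem~\ref{t:6.5} to the true values: the gaps ($26$ versus $20$ for $q=3$, $63$ versus $41$ for $q=4$) are wide, and closing them seems to require either an intricate, largely ad hoc study of how large hyperplane sections of a four-dimensional cap can coexist (Pellegrino, Hill) or a carefully pruned exhaustive search (Edel--Bierbrauer); no uniform argument is known.
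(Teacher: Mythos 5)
The paper offers no proof of this theorem: it is a survey item, with each part attributed to its original source (Bose, Pellegrino, Hill, Edel--Bierbrauer), so there is no ``paper's proof'' to compare against. Your argument for part (i) is complete and correct, and is a genuine addition: the counting bound through a fixed cap point, the averaging argument showing every external point lies on exactly $2^{n-1}$ secants and no tangents, the resulting closure of the complement under the third-point operation, and the identification of the complement plus $0$ with an index-$2$ subgroup of $V(n+1,2)$ are all sound, as is the converse. The numerical data you quote for the section counts in $\PG(4,3)$ and $\PG(4,4)$ ($121$, $40$, $13$; $341$, $85$, $21$) and the deduction $m_2(5,3)\le 3\,m_2(4,3)-4=56$ from Theorem~\ref{t:6.5}(i) are likewise correct.

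You should be clear, however, that parts (ii)--(iv) are not proved by your proposal. The reduction of (iii) to (ii) handles only the upper bound; the existence of Hill's $56$-cap and its projective uniqueness, Pellegrino's determination that $m_2(4,3)=20$ with exactly nine classes, and the Edel--Bierbrauer value $m_2(4,4)=41$ with exactly two classes all live precisely in the steps you defer (``a detailed analysis \ldots forces $k\le 20$'', ``a carefully pruned exhaustive search''). You correctly identify where the difficulty sits and why no uniform argument closes the gap between the recursive bounds of Theorem~\ref{t:6.5} and the true values, but as written the proposal establishes (i) only and reproduces the statement of (ii)--(iv) with a plausible roadmap rather than a proof. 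Since the paper itself merely cites these results, this is an honest and appropriately calibrated treatment, not an error --- just do not present the sketches as more than they are.
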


\begin{rem}\label{r:7.2}
No other values of $m_2(n, q), n > 3$, are known.
\end{rem}

 Several bounds were obtained for the number $k$ for which there exist complete $k$-caps in $\PG(3, q)$
which are not ovoids; these bounds are then used to determine bounds for $m_2(n, q)$, with $n > 3$. Here 
are a few good bounds, without restrictions on $q$ except for a few small cases.

\begin{theorem}\label{t:7.3} {\rm (Meshulam \cite{mesR95})}
For $n \geq 4, q = p^h$ and $p$ an odd prime,
\begin{equation}
m_2(n, q) \leq {\frac{nh + 1}{(nh)^2}}q^n + m_2(n - 1, q).
\end{equation}
\end{theorem}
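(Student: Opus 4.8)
The plan is to use a counting argument in the style of Meshulam's Fourier-analytic method, adapted to the projective setting, which amounts to analysing a cap in $\PG(n,q)$ through its intersections with the hyperplanes through a fixed point. First I would fix a point $P$ of a $k$-cap $\cK$ in $\PG(n,q)$ and consider the quotient geometry at $P$, which is a $\PG(n-1,q)$; the lines through $P$ correspond to points of this quotient, and since $\cK$ is a cap, each such line carries at most one further point of $\cK$. Thus $\cK\setminus\{P\}$ projects injectively into $\PG(n-1,q)$, and the image is constrained by the cap condition on $\cK$. The key is to translate ``no three collinear'' into an additive-combinatorial statement: working in the affine chart $\mathrm{AG}(n,q)=\Fq^n$ obtained by deleting a hyperplane not through $P$, a cap becomes a set containing no three points $a,b,c$ with $a+c=2b$, i.e.\ (for $p$ odd) a set free of nontrivial three-term arithmetic progressions.

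The core step is then a density increment / character-sum estimate. One writes the number of three-term progressions in the affine part of $\cK$ as a sum over additive characters $\chi$ of $\Fq^n$ of $\widehat{\mathbf 1_{\cK}}(\chi)^2\,\widehat{\mathbf 1_{\cK}}(-2\chi)$; since $\cK$ is progression-free, the main term (from $\chi=0$) must be cancelled by the nontrivial Fourier coefficients, forcing some $|\widehat{\mathbf 1_{\cK}}(\chi)|$ to be large. A large Fourier coefficient at a character $\chi\neq 0$ means $\cK$ has abnormally large intersection with one of the cosets of the hyperplane $\{\,x:\chi(x)=0\,\}\subset\Fq^n$; after an affine change of coordinates this is a genuine hyperplane section of the cap, which by definition is again a cap, now living in $\PG(n-1,q)$ and hence of size at most $m_2(n-1,q)$. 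Iterating this bound over the $q$ parallel hyperplane slices — there are $q$ of them, and the total is $k$ minus the points at infinity — and feeding in the ``large slice'' of size roughly $k/q + (\text{gain})$ produces the recursive inequality. The numerology: the gain extracted from the character sum is of order $k^2/q^n$ up to a factor polynomial in $\log_p q^n = nh$, and tracking constants carefully yields precisely the coefficient $(nh+1)/(nh)^2$ in front of $q^n$, with the leftover hyperplane contributing the additive $m_2(n-1,q)$ term.

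The main obstacle is bookkeeping the constant in the character-sum step so that it comes out as exactly $(nh+1)/(nh)^2$ rather than some looser $O_{nh}(1)$; this requires the sharp form of the ``large spectrum forces large fibre'' lemma over $\Fq^n=\Fp^{nh}$, rather than a crude Cauchy–Schwarz, and careful treatment of the dimension parameter $nh$ as the relevant ``$n$'' in the Roth-type bound $r_3(\Fp^N)\lesssim p^N/N$. A secondary technical point is that the translation between the projective cap $\cK\subset\PG(n,q)$ and the affine progression-free set is only clean after deleting a hyperplane, so one should choose that hyperplane to be tangent to $\cK$ (or disjoint from it) to avoid losing points, and one must handle separately the points of $\cK$ lying on the hyperplane at infinity — but these number at most $m_2(n-1,q)$, which is exactly why that term appears additively and not with a factor of $q$. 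Everything else (the passage $q=p^h$, the oddness of $p$ to make $2$ invertible so that ``collinear'' genuinely means ``AP'') is routine once the character-sum constant is pinned down.
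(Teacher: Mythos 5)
Your high-level reduction is the right one, and it is essentially all the paper intends by citing Meshulam: remove any hyperplane $H$ of $\PG(n,q)$, note that $\cK\cap H$ is a cap of $H\cong\PG(n-1,q)$ and so has at most $m_2(n-1,q)$ points, and observe that $\cK\setminus H$, viewed in $\mathrm{AG}(n,q)=(\Fq^{\,n},+)\cong\mathbb{Z}_p^{nh}$, contains no nontrivial three-term arithmetic progression (a progression $a+c=2b$ with $a\neq c$ gives three distinct collinear points since $p$ is odd), so Meshulam's theorem bounds it by $\frac{nh+1}{(nh)^2}p^{nh}=\frac{nh+1}{(nh)^2}q^n$. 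Your final paragraph correctly identifies this as the source of the additive $m_2(n-1,q)$ term; note that no care in choosing $H$ is needed, since its intersection with $\cK$ is bounded separately rather than ``lost''. The initial projection from a point $P$ plays no role and can be deleted.

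The middle of your argument, however, would not work as written. A nontrivial additive character $\chi$ of $(\Fq^{\,n},+)\cong\mathbb{Z}_p^{nh}$ has kernel of index $p$, i.e.\ an $\bF_p$-hyperplane of $\mathbb{Z}_p^{nh}$; for $h>1$ this is not an $\Fq$-subspace, so the coset on which the density increment occurs is \emph{not} a hyperplane section of the cap and cannot be bounded by $m_2(n-1,q)$. The Roth--Meshulam recursion must be run entirely inside the category of progression-free subsets of $\mathbb{Z}_p^N$, descending $N=nh$ times through index-$p$ subgroups -- which is precisely why the coefficient involves $nh$ rather than $n$. Your proposed alternative, iterating over the $q$ parallel $\Fq$-hyperplane slices each bounded by $m_2(n-1,q)$, would at best yield an inequality of the shape $k\leq q\,m_2(n-1,q)-(\mbox{gain})$, a Hill-type bound of a different form from the one claimed. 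Finally, the constant $\frac{nh+1}{(nh)^2}$ is exactly the content of Meshulam's theorem for $\mathbb{Z}_p^{nh}$ and is left unestablished in your sketch; the clean proof simply quotes that theorem for the affine part rather than re-deriving the character-sum iteration.
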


\begin{theorem}\label{t:7.4} {\rm(\cite[Chapter 18]{hirJ85})}
In $\PG(3, q)$, $q$ odd and $q \geq 67,$ if $\cK$ is a complete $k$-cap which is not an elliptic 
quadric$,$ then
\[
k < q^2 - {\txt\frac{1}{4}}q^{\frac{3}{2}} + 2q.
\]
More precisely,
\[
k \leq {q^2 - {\txt\frac{1}{4}}q^{\frac{3}{2}} + R(q)},
\]
where
\[
R(q) = \frac{(31q + 14\sqrt{q} -53)}{16}.
\]
\end{theorem}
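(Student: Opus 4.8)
\textbf{Proof plan for Theorem \ref{t:7.4}.}

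The plan is to study a complete $k$-cap $\cK$ in $\PG(3,q)$, $q$ odd, that is not an elliptic quadric, via its intersection numbers with planes. For a plane $\pi$, the trace $\cK\cap\pi$ is a cap of $\PG(2,q)$, hence a plane arc, of size at most $q+1$; call a plane with $|\cK\cap\pi|=q+1$ a \emph{conic plane} (since for $q$ odd such a maximal plane arc is a conic), and otherwise count tangent planes (meeting $\cK$ in one point) and the distribution of secant and external planes. The first step is to set up the standard counting identities: counting incident (point, plane) pairs, (pair of points of $\cK$, plane) triples, and (triple of points, plane) quadruples gives three linear relations among the $t_i$, the number of planes meeting $\cK$ in exactly $i$ points. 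From these one extracts, for a fixed point $P\in\cK$, information about how the $q^2+q+1$ planes through $P$ distribute over the remaining $k-1$ points; in particular the tangent lines and tangent planes at $P$ are controlled.

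The heart of the argument, following the classical approach of Segre as developed in \cite[Chapter 18]{hirJ85}, is to exploit the fact that $\cK$ is \emph{complete}: every point $R\notin\cK$ lies on at least one secant of $\cK$, equivalently the secants cover all of $\PG(3,q)$. Counting secants (there are $\binom{k}{2}$ of them) against the $q^2+q+1$ points off $\cK$ on each, and comparing with the total number of points, forces $k$ to be large — roughly $k\sim q^2$ — and more precisely yields a bound on the ``deficiency'' $\delta$ measuring how far $\cK$ is from an ovoid. Then one looks in a plane section: through a generic point one finds a plane $\pi$ whose trace $\cK\cap\pi$ is a large plane arc, of size $k'$ with $k'$ close to $q+1$; if $k' = q+1$ this conic plane produces, by a further incidence count on the $q$ tangent lines to the conic in $\pi$, either a contradiction with completeness or forces $\cK$ to contain enough conics that Theorem \ref{thm3.2.5} (Brown's theorem) makes it an elliptic quadric. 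The quantitative work is to show that if $\cK$ is \emph{not} an elliptic quadric then the plane arcs cannot all be too large; applying the arc-extension bound of Theorem \ref{thm2.4.4}(ii) (for $q$ odd, a $k'$-arc with $k' > q - \qa\sqq + \tws$ extends to an oval, i.e.\ a conic) one deduces that ``many'' plane sections are in fact conic planes, and then Brown's theorem again applies — so non-conic plane sections must be frequent, i.e.\ $k' \le q - \qa\sqq + \tws$ for a positive proportion of planes, and feeding this into the counting identities is what drives $k$ \emph{down} from $q^2+1$ by a term of order $\qa q^{3/2}$.

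Concretely, after the counting relations one arrives at an inequality of the shape
\[
k \le q^2 - {\txt\frac14}q^{3/2} + cq + O(\sqq)
\]
for an explicit constant, and the refinement to $R(q) = (31q + 14\sqq - 53)/16$ comes from carrying the lower-order terms exactly rather than absorbing them into $O(q)$: one keeps track of the exact number of tangent planes and the exact size of the largest plane section, using that the relevant arc in $\PG(2,q)$ has size at most $q+1$ and at least some explicit value. The restriction $q \ge 67$ enters precisely so that the error terms in the arc-extension theorem and in the secant-covering count are dominated by the main $-\qa q^{3/2}$ term, i.e.\ so that $q - \qa\sqq + \tws < q+1$ comfortably and the quadratic-in-$\sqq$ estimates have the right sign.

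The main obstacle, as usual in these Segre-type arguments, will be the passage from ``the cap is complete'' to a usable \emph{pointwise} statement about plane sections: one must show not merely that a large plane section exists but that the sizes of plane sections are tightly concentrated near $q+1$ unless $\cK$ is an elliptic quadric, and to get the sharp constant $\tfrac14$ in front of $q^{3/2}$ one cannot afford to lose anything in this concentration estimate. This is where Brown's theorem (Theorem \ref{thm3.2.5}) is indispensable: it is what rules out the alternative that $\cK$ is large, has many conic sections, yet is not an elliptic quadric. The bookkeeping to extract the precise $R(q)$ — rather than an $O(q)$ error — is then a finite but delicate optimisation over the possible intersection-number distributions, and that is the step I would expect to consume most of the effort.
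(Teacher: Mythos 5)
The paper itself contains no proof of Theorem \ref{t:7.4}; it is quoted from \cite[Chapter 18]{hirJ85}, so your sketch has to be measured against the argument there. You have correctly located the source of the main term: the coefficient $\frac14$ in $-\frac14 q^{3/2}$ does come from the planar bound of Theorem \ref{thm2.4.4}(ii), i.e.\ from the fact that a $k'$-arc of $\PG(2,q)$, $q$ odd, with $k' > q - \frac14\sqrt{q} + \frac{25}{16}$ extends to a conic, applied across a pencil of roughly $q$ planes. That much of the plan is right.

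However, there are genuine gaps. First, you make Brown's theorem (Theorem \ref{thm3.2.5}) the ``indispensable'' engine, but that theorem is for $q$ \emph{even} and applies only to ovoids, i.e.\ $(q^2+1)$-caps; it is unusable in this $q$-odd statement about caps of size $k\le q^2$ (and it postdates the 1985 result). For $q$ odd the only input of that flavour is Barlotti--Panella (Theorem \ref{thm3.2.4}), whose sole role is to turn ``not an elliptic quadric'' into ``$k\le q^2$''. Second, the step you yourself flag as the main obstacle is exactly the step your plan does not contain: a plane section of a complete cap need not be a complete plane arc, so the second-largest-complete-arc bound cannot be applied to plane sections directly. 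The argument in \cite[Chapter 18]{hirJ85} gets around this by fixing a point $P\in\cK$ and a tangent line $\ell$ at $P$ and analysing the pencil of $q+1$ planes through $\ell$: each section is an arc with $\ell$ tangent at $P$; if such a section exceeds $q-\frac14\sqrt{q}+\frac{25}{16}$ it extends to a conic touching $\ell$ at $P$, and completeness of $\cK$ (every point of that conic off $\cK$ must lie on a secant of $\cK$) is then used to show that only a bounded number of planes in the pencil can carry such large sections; summing $\sum_{\pi\supset\ell}(|\cK\cap\pi|-1)=k-1$ over the pencil then yields $k\le q^2-\frac14 q^{3/2}+R(q)$, with $R(q)$ coming from the exact bookkeeping. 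Nothing replacing this mechanism appears in your proposal. Finally, the secant-covering count you describe gives only $\binom{k}{2}(q-1)\ge q^3+q^2+q+1-k$, i.e.\ $k\gtrsim q\sqrt{2}$, not $k\sim q^2$; and in any case the theorem requires an \emph{upper} bound, so the correct logical frame is to assume $k$ exceeds the stated bound and derive a contradiction, not to show $k$ is large.
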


\begin{defn}\label{d:7.5}
Let $m_2^\prime(2, q)$ be the size of the second largest complete arc of
$\PG(2, q)$ and let $m_2^\prime(3, q)$ be the size of the second
largest cap of $\PG(3, q)$.
\end{defn}

\indent Nagy and Sz\H{o}nyi \cite{nagG97} 
follow more or less the line of the proof of Theorem \ref{t:7.4}, and derive a bound
for $m_2^\prime(3, q)$ in terms of $m_2^\prime(2, q)$. Their bound involves a
more careful enumeration of certain plane sections of a large cap; so it
yields an improvement on the bounds in Theorem \ref{t:7.4}.

\begin{theorem}\label{t:7.6} {\rm (Nagy and Sz\H{o}nyi \cite{nagG97})}
If$,$ for $q$ odd$,$ $m_2^\prime(2, q) \geq (5q + 19)/{6},$ then
\[
m_2^\prime(3, q) < qm_2^\prime(2, q) +  \tqa(q + {\txt\frac{10}{3}} - m_2^\prime(2, q))^2  - q - 1.
\]
\end{theorem}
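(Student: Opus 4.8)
The plan is to mimic the strategy behind Theorem~\ref{t:7.4}, but to replace the crude counting of plane sections of a large cap by a sharper enumeration, using the hypothesis $m_2'(2,q)\ge(5q+19)/6$ to guarantee that ``most'' of the relevant plane sections are still large enough to be complete arcs of near-maximal size. First I would let $\cK$ be a complete $k$-cap of $\PG(3,q)$ of size $k=m_2'(3,q)$ that is not an ovoid, and consider the planes of $\PG(3,q)$. Each plane $\pi$ meets $\cK$ in a cap of $\pi\cong\PG(2,q)$, i.e.\ in an arc; write $t_i$ for the number of planes meeting $\cK$ in exactly $i$ points. The standard counting identities give $\sum t_i=(q^2+q+1)(q+1)/(q+1)=q^3+q^2+q+1$ wait --- more precisely $\sum_i t_i=\theta_3$, $\sum_i i\,t_i=k\,\theta_2$, and $\sum_i \binom{i}{2}t_i=\binom{k}{2}\theta_1$, where $\theta_j=(q^{j+1}-1)/(q-1)$ counts planes through a point, a line, etc. These are the same three equations used to prove Theorem~\ref{t:7.4}.

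Next I would exploit that a plane section of size $i$ is an arc, so either it is incomplete and extends, or $i\le m_2'(2,q)$ unless the section is an oval or hyperoval. The key refinement over Theorem~\ref{t:7.4} is to pin down, via Theorem~\ref{thm2.4.4}, that any plane section of size exceeding roughly $q-\tfrac14\sqq+\tfrac{25}{16}$ (for $q$ odd) must extend to an oval, and an oval of a plane meeting a cap not equal to an ovoid can contain at most a bounded number of points of $\cK$; combined with a tangent-plane count at a fixed point $P\in\cK$, one shows the sections through $P$ cannot all be large. The arithmetic then forces an inequality of the shape $k\le q\,m_2'(2,q)+(\text{correction})$, where the correction term is exactly the quadratic $\tqa(q+\tfrac{10}{3}-m_2'(2,q))^2$ coming from estimating the contribution of those plane sections whose size lies strictly between $m_2'(2,q)$ and the Segre extension threshold; the hypothesis $m_2'(2,q)\ge(5q+19)/6$ is precisely what makes that window narrow enough for the quadratic estimate to close, and what guarantees the sign of the terms one discards.

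Concretely, I would fix a point $P$ of $\cK$, restrict the three counting identities to the $\theta_2=q^2+q+1$ planes through $P$ (so now $\sum_i t_i^{(P)}=q^2+q+1$, $\sum_i(i-1)t_i^{(P)}=(k-1)(q+1)$, $\sum_i\binom{i-1}{2}t_i^{(P)}=\binom{k-1}{2}$), and split the planes through $P$ according to whether their section size exceeds $m_2'(2,q)$ or not. A plane through $P$ whose section has size $>m_2'(2,q)$ either has a section that is an incomplete arc missing few points, or --- since $\cK$ is a cap not an ovoid --- is tightly constrained; a weighted average argument (multiplying the first identity by a suitable quadratic in $i$ chosen to vanish near $i=m_2'(2,q)$ and applying $\sum(\text{nonneg weight})t_i^{(P)}\ge0$) converts the identities into the stated bound after collecting terms. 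The main obstacle will be the careful bookkeeping of the ``middle range'' plane sections: controlling how many points of $\cK$ can lie on an arc-completion of a plane section, and verifying that the resulting quadratic in $m_2'(2,q)$ has exactly the coefficient $\tqa$ and the shift $\tfrac{10}{3}$ claimed. This is where one must be most careful not to lose constants, and where the precise form of the Segre-type extension bound in Theorem~\ref{thm2.4.4}(ii) and the hypothesis $m_2'(2,q)\ge(5q+19)/6$ are used in an essential, non-wasteful way.
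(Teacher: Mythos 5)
The paper itself offers no proof of this statement: it is quoted from Nagy and Sz\H{o}nyi \cite{nagG97}, with only the remark that they ``follow more or less the line of the proof of Theorem~\ref{t:7.4}'' but with ``a more careful enumeration of certain plane sections of a large cap.'' Your outline is consistent with that one-sentence description -- the three standard incidence identities for the plane sections of a cap, their restriction to the pencil of planes through a fixed point $P\in\cK$, and the dichotomy between plane sections that are complete arcs of size at most $m_2^\prime(2,q)$ and those that extend towards ovals, are indeed the ingredients of arguments of this type. The restricted identities you write down are correct.

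However, what you have is a plan, not a proof, and the gap sits exactly where the theorem lives. Everything that distinguishes this bound from the cruder Theorem~\ref{t:7.4} -- the coefficient $\frac{3}{4}$, the shift $\frac{10}{3}$, and the precise role of the hypothesis $m_2^\prime(2,q)\geq(5q+19)/6$ -- is deferred to ``careful bookkeeping'' that is never performed. You do not specify the quadratic weight function, do not show that its application to the three identities actually yields the claimed inequality, and do not exhibit where the hypothesis is needed to keep the discarded terms of the correct sign. Moreover, the one concrete geometric claim you lean on, that ``an oval of a plane meeting a cap not equal to an ovoid can contain at most a bounded number of points of $\cK$,'' is false as stated: a plane section of a large cap can perfectly well be a full oval, and the actual argument must instead bound the \emph{number} of planes through $P$ carrying such large sections via the count of tangent lines at $P$. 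As it stands the proposal identifies the right framework (the same one the paper attributes to Nagy and Sz\H{o}nyi) but does not establish the inequality; to be acceptable it would need the explicit weighted-sum computation producing the stated quadratic, or a direct appeal to \cite{nagG97}.
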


\begin{theorem}\label{t:7} {\rm (\cite{thaJ18a})}
In $\PG(3, q),$ $q$ even and $q \geq 8,$ if $\cK$ is a complete $k$-cap which is not an ovoid$,$ then
\[
k < q^2 - (\sqrt{5} - 1)q + 5.
\]
\end{theorem}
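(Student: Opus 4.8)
The plan is to recast the statement as a lower bound on the \emph{deficiency} $\delta:=q^{2}+1-k$ and argue by contradiction. Since $\cK$ is not an ovoid, Theorem~\ref{thm3.2.1} gives $k\le q^{2}$, so $\delta\ge 1$; and the asserted inequality $k<q^{2}-(\sqrt5-1)q+5$ is exactly $\delta>(\sqrt5-1)q-4$. So I would assume $1\le\delta\le(\sqrt5-1)q-4$ and seek a contradiction. The basic objects are the plane sections: every plane $\pi$ meets $\cK$ in an arc of $\PG(2,q)$, hence (as $q$ is even) in at most $q+2$ points; put $d_{\pi}=q+2-|\cK\cap\pi|\ge 0$. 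By Theorem~\ref{thm2.4.4}(i), whenever $d_{\pi}<\sqrt q+1$ the section $\cK\cap\pi$ extends to a \emph{unique} hyperoval $H_{\pi}$ of $\pi$; I call the $d_{\pi}$ points of $H_{\pi}\setminus\cK$ the \emph{holes} of $\pi$. Since a hyperoval of $\PG(2,q)$ with $q$ even has no tangent lines, every line of $\pi$ through a hole of $\pi$ meets $\cK$ in at most one point.

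Next I would record the usual double-counting identities for the $d_{\pi}$. Counting incidences of $\cK$ against all planes, against the pencil of planes on a line, and against the planes on a point of $\cK$ gives $\sum_{\pi}d_{\pi}=2q^{3}+q^{2}+2q+1+\delta(q^{2}+q+1)$; for a fixed secant line $\ell$ of $\cK$, $\sum_{\pi\supseteq\ell}d_{\pi}=q+1+\delta$; for a fixed point $P\in\cK$, $\sum_{\pi\ni P}d_{\pi}=(q+1)(q+1+\delta)$, i.e.\ there are exactly $q+1+\delta$ tangent lines at $P$; and a fourth count gives $\sum_{\pi}d_{\pi}^{2}=q^{4}+3q^{3}+(2+3\delta)q^{2}+(3+4\delta+\delta^{2})q+1+2\delta+\delta^{2}$. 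The completeness hypothesis enters once: if $R$ is a hole of a plane $\pi$ with $d_{\pi}<\sqrt q+1$, then $R\notin\cK$, so $\cK\cup\{R\}$ is not a cap, so some secant line of $\cK$ runs through $R$; by the previous paragraph this secant is not contained in $\pi$, hence meets $\pi$ in the single point $R$.

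The core of the argument is then a single double count, of pairs $(\pi,\ell)$ in which $\pi$ is a plane of deficiency $d_{\pi}<\sqrt q+1$ and $\ell$ is a secant line of $\cK$ meeting $\pi$ in a hole of $\pi$. Counting by planes, completeness produces at least $\sum_{d_{\pi}<\sqrt q+1}d_{\pi}$ such pairs, which by the first identity above is $\sum_{\pi}d_{\pi}$ minus the deficiency carried by the planes with $d_{\pi}\ge\sqrt q+1$. Counting by secant lines and then organising the count over points and pencils, one invokes the incidence identities above together with a Cauchy--Schwarz (convexity) bound on the deficiency distribution to obtain the matching upper bound; equating the two yields a quadratic inequality in $\delta$ which, after completing the square, reads $(\delta+q)^{2}\ge 5q^{2}-O(q)$. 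Taking square roots is where the constant $\sqrt5$ appears ($\sqrt{5q^{2}}=\sqrt5\,q$), and a careful accounting of the lower-order terms, using $q\ge 8$, upgrades the estimate to $\delta>(\sqrt5-1)q-4$, contradicting the standing assumption.

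The step I expect to be the real obstacle is controlling the planes whose section is too small for Theorem~\ref{thm2.4.4}(i), those with $d_{\pi}\ge\sqrt q+1$ (and, symmetrically, the occasional hyperoval-plane, with $d_{\pi}=0$), since for these the ``hole'' bookkeeping of the second paragraph is simply unavailable. One must show that such exceptional planes are few and carry only a lower-order share of the total deficiency, so they do not spoil the main estimate; the natural instrument is the exact value of $\sum_{\pi}d_{\pi}^{2}$ recorded above, which via an extremal/convexity argument bounds both the number and the total deficiency of the high-deficiency planes, while the hyperoval-planes are eliminated by a short direct argument using that $\cK$ is a cap in $\PG(3,q)$ with $q\ge 8$. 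Keeping all these error terms small enough that the strict inequality still holds for \emph{every} $q\ge 8$, not merely asymptotically, is exactly what consumes the ``$+5$'' and the hypothesis $q\ge 8$.
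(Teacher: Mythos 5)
The survey you are working from does not actually prove this theorem: it is quoted, with only a citation, from Thas's paper \cite{thaJ18a}, so there is no in-paper argument to measure your proposal against. Judged on its own terms, your framework and bookkeeping are sound: the reformulation as $\delta>(\sqrt5-1)q-4$ with $\delta=q^2+1-k\ge 1$, all four incidence identities for the plane deficiencies $d_\pi$ (I checked each, including the expression for $\sum_\pi d_\pi^2$), the use of Theorem \ref{thm2.4.4}(i) to extend sections with $d_\pi<\sqrt q+1$ to a unique hyperoval, and the observation that completeness forces through every hole $R$ a secant meeting the plane only in $R$ (since every line of the plane through a hole meets $\cK$ in at most one point) are all correct, and they are exactly the ingredients one expects in a Segre-style bound on the second largest complete cap.

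The genuine gap is that the step which actually produces the theorem --- the derivation of $(\delta+q)^2\ge 5q^2-O(q)$ --- is asserted rather than proved, and the specific double count you propose cannot by itself deliver it. Writing $h_R$ for the number of good planes having $R\notin\cK$ as a hole and $\sigma_R$ for the number of secants through $R$, your count of pairs $(\pi,\ell)$ equals $\sum_R\sigma_Rh_R$ whether you organise it by planes or by secants; the only inequality you extract from completeness is $\sigma_R\ge 1$, giving $\sum_R\sigma_Rh_R\ge\sum_Rh_R=\sum_{\mathrm{good}\ \pi}d_\pi$, which is linear in $\delta$ and carries no quadratic information. To reach $\delta^2+2q\delta\gtrsim 4q^2$ (equivalently, that each point of $\cK$ lies on more than $\sqrt5\,q-3$ tangent lines) you need a genuinely independent second estimate --- for instance an upper bound on $h_R$ in terms of $\sigma_R$ coming from the fact that a plane in which $R$ is a hole contains no secant through $R$, hence corresponds to an external line to a $\sigma_R$-set in the quotient plane at $R$, combined with control of $\sum_R\sigma_R^2$ --- and you give no indication of what that input is or why a Cauchy--Schwarz bound on the $d_\pi$-distribution would supply it. Since the constant $\sqrt5$ is the entire content of the statement, this is not a matter of ``keeping the error terms small'': it is the proof. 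The treatment of the planes with $d_\pi\ge\sqrt q+1$, which you flag yourself, is a second unresolved point, but it is subordinate to the first.
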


\begin{rem}\label{r: 7.8}
Combining the previous theorem with the main theorem of Storme and Sz\H{o}nyi
\cite{stoL93}, there is an immediate improvement of the previous result. This
important remark is due to Sz\H{o}nyi.
\end{rem}

\begin{theorem}\label{t:7.9} {\rm(\cite{thaJ18a})}
In $\PG(3, q),\ q$ even and $q \geq 2048,$ if $\cK$ is a complete $k$-cap which
is not an ovoid$,$ then
\[
k < q^2 - 2q +3\sqrt{q} + 2.
\]
\end{theorem}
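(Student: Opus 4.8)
The plan is to bootstrap from Theorem \ref{t:7} by combining it with the structural result of Storme and Sz\H{o}nyi \cite{stoL93} on large complete arcs in $\PG(2,q)$, exactly as anticipated in Remark \ref{r: 7.8}. The starting point is that a complete $k$-cap $\cK$ in $\PG(3,q)$ which is not an ovoid must, for each plane $\pi$ of $\PG(3,q)$, meet $\pi$ in an arc (possibly incomplete) of $\pi$; and a standard counting argument (the one underlying Theorem \ref{t:7.4} and Theorem \ref{t:7}) relates $k$ to the sizes of these plane sections. Concretely, if one fixes a point $P\in\cK$ and counts pairs $(Q,\pi)$ with $Q\in\cK\setminus\{P\}$ and $\pi$ a plane through $P$ and $Q$, one obtains an identity that, together with the fact that through each secant line there pass $q+1$ planes, forces some plane section to be large when $k$ is close to $q^2$.

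The key step is then to invoke \cite{stoL93}: if a plane section $\cK\cap\pi$ has size exceeding the Storme--Sz\H{o}nyi threshold (which for $q$ even is of order $q-\sqrt q+\text{const}$, the natural companion to Theorem \ref{thm2.4.4}(i)), then $\cK\cap\pi$ extends inside $\pi$ to a hyperoval $H_\pi$ of $\pi$, and moreover this hyperoval is essentially unique. First I would show that for $k$ in the range under consideration, a positive proportion — indeed, asymptotically all — of the planes through a fixed secant meet $\cK$ in such a large arc, so that the ``tangent plane'' structure at points of $\cK$ is controlled by these hyperovals. Next I would run the ovoid-reconstruction argument: the set of hyperovals $H_\pi$ fits together coherently (their nuclei and tangent lines match along common lines), and a large complete cap whose generic plane sections are hyperovals must itself be close to an ovoid, which bounds $k$ from above. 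The improvement over Theorem \ref{t:7} comes precisely because hyperoval plane sections have size $q+2$ rather than merely $q+1$, so the deficiency $q^2+1-k$ is controlled more tightly, yielding the $q^2-2q+3\sqrt q+2$ bound; the hypothesis $q\ge 2048$ is what is needed to push $m_2'(2,q)$ (hence the relevant plane sections) above the Storme--Sz\H{o}nyi extendability threshold while keeping all error terms absorbed.

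The main obstacle I expect is the bookkeeping in the mixed-plane-section count: not every plane through a secant need meet $\cK$ in a large arc, and one must quantify how many ``bad'' planes (with small or medium-sized sections) can occur and show their total contribution to the point count is $O(q^{3/2})$, which is what makes the $3\sqrt q$ term appear rather than something weaker. This is the same difficulty that Nagy and Sz\H{o}nyi \cite{nagG97} handle by a more careful enumeration of plane sections in the odd case (Theorem \ref{t:7.6}), and the even-characteristic version here requires an analogous — but technically distinct, because of the nucleus of a hyperoval — refinement of that enumeration. Once the contribution of bad planes is bounded, the final inequality follows by a direct, if somewhat delicate, estimate, and the threshold $q\ge 2048$ is chosen as the smallest power-of-two-friendly bound for which all the error terms close up.
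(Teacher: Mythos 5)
You have correctly located the intended argument: this is a survey, and the paper gives no proof of Theorem \ref{t:7.9} beyond Remark \ref{r: 7.8}, namely that it follows by combining Theorem \ref{t:7} with the main theorem of Storme and Sz\H{o}nyi \cite{stoL93}. As a proof, however, your proposal has a genuine gap at exactly the point where that combination must be made precise. You never state which result of \cite{stoL93} you are invoking, and --- more seriously --- Theorem \ref{t:7} plays no role in the argument you actually sketch: the chain ``large cap $\Rightarrow$ large plane sections $\Rightarrow$ hyperoval completions $\Rightarrow$ reconstruction $\Rightarrow$ bound'' is self-contained, and if it worked it would make Theorem \ref{t:7} redundant. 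The whole point of the combination (and the reason it is credited as a separate observation of Sz\H{o}nyi) is that the Storme--Sz\H{o}nyi machinery excludes complete non-ovoid caps only in an intermediate size window whose lower end is $q^2-2q+3\sqrt{q}+2$ but whose upper end falls short of $q^2+1$; by itself it cannot exclude a complete non-ovoid cap of size very close to $q^2+1$. Theorem \ref{t:7} is precisely what closes that top range, forcing $k<q^2-(\sqrt{5}-1)q+5$ and hence into the window covered by \cite{stoL93}. A correct write-up must quote the Storme--Sz\H{o}nyi theorem with its size hypotheses and check that its interval overlaps the bound of Theorem \ref{t:7}; your proposal does neither.

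A secondary problem is the ``ovoid-reconstruction'' step. An ovoid of $\PG(3,q)$, $q$ even, meets every plane in $1$ or $q+1$ points, so the hyperoval completions $H_\pi$ of the plane sections cannot literally glue into an ovoid; the standard mechanism in this line of argument is a completeness count: each point of $H_\pi\setminus\cK$ extends $\cK\cap\pi$ inside $\pi$, so by completeness of $\cK$ it must lie on a secant of $\cK$ not contained in $\pi$, and counting these incidences against the planes with large section is what produces the contradiction. Your attribution of the improvement to hyperovals having $q+2$ rather than $q+1$ points is also not where the gain comes from; it comes from the planar extendability threshold $q-\sqrt{q}+1$ of Theorem \ref{thm2.4.4}(i). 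In short, the honest version of the paper's proof is two lines of citation; the version you wrote instead attempts to reprove \cite{stoL93} and does not succeed.
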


Relying on Theorems \ref{t:6.5} and \ref{t:7.4},  the following result is obtained.

\begin{theorem}\label{t:7.10} 
{\rm(\cite{hirJ83a,hirJ16})}
In $\PG(n, q),\ n \geq 4,$ $q \geq 197$ and odd$,$
\[
m_2(n, q) < q^{n - 1} - \qa q^{n - \frac{3}{2}} + 2q^{n - 2}.
\]
For $n \geq 4$, $q \geq 67$ and odd$,$
\begin{eqnarray*}
m_2(n, q) & < & q^{n - 1} - q^{n - \frac{3}{2}} + {\txt\frac{1}{16}}(31q^{n - 2}  + 22q^{n - \frac{5}{2}} - 112q^{n - 3} - 14q^{n - \frac{7}{2}} 
+ 69q^{n - 4})\\ 
 && - 2(q^{n - 5} + q^{n - 6} + \cdots + q + 1),\nonumber
\end{eqnarray*}
where there is no term   $-2(q^{n - 5} + q^{n - 6} + \cdots + q + 1)$ for $n = 4$.
\end{theorem}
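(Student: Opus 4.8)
The plan is to bootstrap from the three-dimensional bound in Theorem~\ref{t:7.4} up to arbitrary dimension $n\geq 4$ by iterating the recursive inequalities of Theorem~\ref{t:6.5}. First I would take a maximal $k$-cap $\cK$ in $\PG(n,q)$. The key dichotomy is whether a fixed hyperplane section of $\cK$ is itself ``large'' or not, but in fact the recursions already encode this: part~(i) of Theorem~\ref{t:6.5} gives $m_2(n,q)\leq q\,m_2(n-1,q)-(q+1)$, and unwinding this down to dimension~$3$ yields
\[
m_2(n,q)\leq q^{n-3}m_2(3,q)-(q+1)(q^{n-4}+q^{n-5}+\cdots+q+1).
\]
Then I would substitute the upper bound for $m_2(3,q)$ coming from the ovoid bound $q^2+1$ versus the non-ovoid bound of Theorem~\ref{t:7.4}: a complete cap in $\PG(3,q)$ is either an elliptic quadric of size $q^2+1$ or has size at most $q^2-\qa q^{3/2}+R(q)$ with $R(q)=(31q+14\sqq-53)/16$. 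Since $q^2+1$ is the larger of the two only in a range where the arithmetic is easy to control, the operative estimate is $m_2(3,q)\le q^2-\qa q^{3/2}+R(q)$ for $q\ge 67$ (and one checks $q^2+1$ does not exceed the claimed bound after multiplying through by $q^{n-3}$).

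The two displayed inequalities in the statement then correspond to two levels of precision. For the first, cruder bound valid for $q\geq 197$, I would simply use $m_2(3,q)< q^2-\qa q^{3/2}+2q$ from Theorem~\ref{t:7.4}, multiply by $q^{n-3}$, and absorb the subtracted geometric-series term (which is negative) and any rounding slack into the inequality; the threshold $197$ is presumably exactly what is needed so that the error terms from $R(q)$ versus $2q$ and from the $-(q+1)(\cdots)$ tail stay on the correct side. For the second, sharper bound valid for $q\geq 67$, I would instead carry the exact expression $q^2-q^{3/2}+R(q)$ — wait, more precisely $q^2 - \qa q^{3/2} + R(q)$ — scale it by $q^{n-3}$, expand $q^{n-3}R(q)$ termwise to produce $\tfrac1{16}(31q^{n-2}+14q^{n-5/2}-53q^{n-3})$, and then combine with the $-2(q^{n-4}+\cdots+q+1)$ coming from $(q+1)(q^{n-4}+\cdots+1)=q^{n-3}+2q^{n-4}+\cdots+2q+2-1$; keeping track of which powers of $q$ land where is the bulk of the computation. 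The special case $n=4$ is where the geometric tail is empty, which is why the statement notes the absence of the $-2(q^{n-5}+\cdots+1)$ term there.

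The main obstacle I anticipate is purely bookkeeping: reconciling the coefficients in the second displayed formula. The combination $\tfrac1{16}(31q^{n-2}+22q^{n-5/2}-112q^{n-3}-14q^{n-7/2}+69q^{n-4})$ does not come from a single application of Theorem~\ref{t:7.4}; it looks like it requires feeding the $\PG(3,q)$ bound into the $\PG(4,q)$ situation via the more refined parts of Theorem~\ref{t:6.5}(ii) (the one with the $q^{n-4}$ scaling and the $\tfrac{q^{n-4}-1}{q-1}$ correction), and then one extra recursion step, so that two rounds of multiplying $R(q)$-type expressions by powers of $q$ and collecting like terms are involved. Getting the $22$, $-112$, $-14$, $69$ to come out exactly — and verifying that the claimed thresholds $q\ge 67$ and $q\ge 197$ are precisely the points at which all the discarded positive slack is dominated — is the delicate part; everything else is a clean induction on $n$. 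I would therefore organise the proof as: (1) state the base case from Theorem~\ref{t:7.4}; (2) set up the induction using Theorem~\ref{t:6.5}; (3) do the coefficient arithmetic once carefully for the step $n-1\to n$; (4) check the thresholds.
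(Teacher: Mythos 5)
The paper itself gives no proof of this theorem (it is quoted from \cite{hirJ83a,hirJ16}), so your proposal has to be judged on whether it would actually work, and it would not: the step where you substitute the bound of Theorem \ref{t:7.4} for $m_2(3,q)$ is invalid. Theorem \ref{t:7.4} bounds only complete caps of $\PG(3,q)$ that are \emph{not} elliptic quadrics; the true value is $m_2(3,q)=q^2+1$, attained by the elliptic quadric, and for $q\ge 67$ one has $q^2+1>q^2-\qa q^{3/2}+2q$ (since $\qa\sqrt q>2$ for $q>64$). Your parenthetical check fails for the same reason: $q^{n-3}(q^2+1)=q^{n-1}+q^{n-3}$, which exceeds $q^{n-1}-\qa q^{n-\frac32}+2q^{n-2}$ for all $q>64$. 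Consequently, iterating Theorem \ref{t:6.5}(i) down to dimension $3$ with the correct value $m_2(3,q)=q^2+1$ only yields
\[
m_2(n,q)\ \le\ q^{n-3}(q^2+1)-(q+1)(q^{n-4}+\cdots+1)\ =\ q^{n-1}-2(q^{n-4}+\cdots+q)-1,
\]
i.e.\ $q^{n-1}$ minus a term of order $q^{n-4}$, which is far weaker than the stated $q^{n-1}$ minus a term of order $q^{n-\frac32}$. No amount of bookkeeping with the coefficients of $R(q)$ can recover the theorem from this starting point.

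The missing idea is a genuinely geometric argument in dimension $4$, which is the actual content of \cite{hirJ83a} (``Caps in elliptic quadrics''): one must show that a large complete cap of $\PG(4,q)$ cannot have its large hyperplane sections contained in elliptic quadrics of those hyperplanes. Only after that is established does Theorem \ref{t:7.4} become applicable to the completions of the hyperplane sections, and a Hill-type count with the improved section bound $q^2-\qa q^{3/2}+R(q)$ then yields the $n=4$ case. This is also why Theorem \ref{t:6.5}(ii) is deliberately anchored at $m_2(4,q)$ rather than $m_2(3,q)$: the correct induction has base case $n=4$, not $n=3$, and for $n\ge 5$ the tail $-2(q^{n-5}+\cdots+q+1)$ you identified does indeed come from the $-2\frac{q^{n-4}-1}{q-1}$ term of \ref{t:6.5}(ii). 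So your overall architecture (base case plus Hill recursion) is right in outline, but you have placed the base case one dimension too low and, in doing so, skipped the one step of the proof that is not arithmetic.
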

 Relying on Nagy and Sz\H{o}nyi \cite{nagG97}, the following improvement of Theorem \ref{t:7.10} is obtained.

\begin{theorem} \label{t:7.11} 
{\rm (Storme,  Thas and Vereecke \cite{stoL02})} 
If$,$ for $q$ odd$,$
\[
m_2^\prime(2, q) \geq \frac{5q + 25}{6} \ \mbox{and} \ m_2(4, q) > \frac{41q^3 + 202q^2 - 47q}{48},
\]
then
\[
m_2(4, q) < (q + 1)(qm_2^\prime(2, q) + \tqa(q + {\txt\frac{10}{3}} - m_2^\prime(2, q))^2 - q - 1 - m_2^\prime(2, q)) + m_2^\prime(2, q).
\]
\end{theorem}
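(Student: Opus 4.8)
The natural strategy is the classical inclusion--exclusion over the pencil of $q+1$ hyperplanes through a plane of $\PG(4,q)$, estimating the codimension-two section by $m_2^\prime(2,q)$ and each codimension-one section by $m_2^\prime(3,q)$, and then feeding in the bound of Theorem~\ref{t:7.6}. Let $\cK$ be a complete $k$-cap of $\PG(4,q)$ with $q$ odd and $k=m_2(4,q)$; one may assume $k>\frac1{48}(41q^3+202q^2-47q)$, since for the relevant range of $q$ this threshold lies below the asserted right-hand side. If $\pi$ is any plane of $\PG(4,q)$ and $\Pi_0,\dots,\Pi_q$ are the $q+1$ hyperplanes through $\pi$, then these cover $\PG(4,q)$ and meet pairwise exactly in $\pi$, so with $t:=|\cK\cap\pi|$ one has the identity
\[
k=\sum_{i=0}^{q}|\cK\cap\Pi_i|-q\,t .
\]
Everything reduces to two points: \emph{(i)} finding a plane $\pi$ with $t>m_2^\prime(2,q)$, and \emph{(ii)} showing that for such a $\pi$ every $\Pi_i$ is \emph{ordinary}, i.e.\ $|\cK\cap\Pi_i|\le m_2^\prime(3,q)$.

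For \emph{(i)} I would exploit the size hypothesis by a counting argument over the plane sections of $\cK$ (and, if that is not enough, first over hyperplane sections, then over the conic plane-sections of an elliptic quadric containing a large hyperplane section, using Theorem~\ref{thm3.2.4}); the threshold on $m_2(4,q)$ is calibrated precisely so that one emerges with a plane $\pi$ carrying more than $m_2^\prime(2,q)$ points of $\cK$. Since $q$ is odd, a complete arc of $\pi$ with more than $m_2^\prime(2,q)$ points is an oval, hence a conic by Segre's theorem, so $\cK\cap\pi$ lies on a conic $\cC$ of $\pi$, and (as $t\ge 5$) $\cC$ is the unique conic through any five of those points.

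For \emph{(ii)} -- the crux -- suppose some $\Pi_i$ through $\pi$ is not ordinary. Then $\cK\cap\Pi_i$ extends inside $\Pi_i\cong\PG(3,q)$ to a complete cap with more than $m_2^\prime(3,q)$ points, necessarily an ovoid, i.e.\ an elliptic quadric $\cO_i$ (Theorems~\ref{thm3.2.1},~\ref{thm3.2.4}); and $\cO_i\cap\pi$ is a conic through the $t\ge5$ points of $\cK\cap\pi$, so $\cO_i\cap\pi=\cC$ and $\cC\subseteq\cO_i$. Since an elliptic quadric contains no line, a line through two of its points meets it in exactly those two; hence for $R\in\cC\setminus\cK$, no secant of $\cK$ through $R$ can lie in $\Pi_i$ (its two $\cK$-points would be collinear with $R$ on $\cO_i$), and in fact such a secant can have no point in $\pi$ at all (a point of it on $\cC$, together with $R$, would make it a chord of $\cC$, hence contained in $\pi$, giving three collinear points of $\cK$ on $\cC$). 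Thus every secant of $\cK$ meeting $\pi$ in a point of $\cC\setminus\cK$ lies in the unique hyperplane of the pencil it spans with $\pi$, and every such hyperplane is ordinary; so it suffices to show that \emph{every} member of the pencil carries such a secant. Here the completeness of $\cK$ and the maximality in the choice of $\pi$ must be pushed hard; the awkward case is $t=q+1$, when $\cK$ contains the whole conic $\cC$ and a separate argument is needed (for instance, if moreover some $\Pi_i$ meets $\cK$ in a full elliptic quadric containing $\cC$, the identity forces $k\ge(q+1)(q^2+1)-q(q+1)=q^3+1$, contradicting Theorem~\ref{t:6.5}), so one should select $\pi$ with $t<q+1$ whenever possible.

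Granting \emph{(ii)}, the identity with $t>m_2^\prime(2,q)$ gives $k<(q+1)m_2^\prime(3,q)-q\,m_2^\prime(2,q)$. Since $m_2^\prime(2,q)\ge\frac{5q+25}{6}\ge\frac{5q+19}{6}$, Theorem~\ref{t:7.6} yields $m_2^\prime(3,q)<B$ with $B=q\,m_2^\prime(2,q)+\tqa\bigl(q+\tfrac{10}{3}-m_2^\prime(2,q)\bigr)^2-q-1$, and
\[
(q+1)m_2^\prime(3,q)-q\,m_2^\prime(2,q)<(q+1)B-q\,m_2^\prime(2,q)=(q+1)\bigl(B-m_2^\prime(2,q)\bigr)+m_2^\prime(2,q),
\]
which is exactly the stated bound. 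The main obstacle is step \emph{(ii)} (and, behind it, extracting the rich plane in \emph{(i)}): the estimate is razor-thin -- one non-ordinary member of the pencil contributes $q^2+1$ in place of $m_2^\prime(3,q)\approx q^2-\tfrac14q^{3/2}$ and already destroys the inequality -- so the exclusion of ovoidal hyperplanes through $\pi$ must be carried out with no slack.
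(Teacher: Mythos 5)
The survey does not reproduce a proof of this theorem (it is quoted from Storme, Thas and Vereecke \cite{stoL02}), so I can only judge your proposal on its own terms. Your overall architecture is the natural and almost certainly the intended one: the incidence identity $k=\sum_{i=0}^{q}|\cK\cap\Pi_i|-qt$ for the pencil of $q+1$ solids through a plane $\pi$, the bound $|\cK\cap\Pi_i|\le m_2^\prime(3,q)$ for ``ordinary'' members, and the substitution of the Nagy--Sz\H{o}nyi bound of Theorem~\ref{t:7.6}; your closing algebra $(q+1)B-q\,m_2^\prime(2,q)=(q+1)(B-m_2^\prime(2,q))+m_2^\prime(2,q)$ does reproduce the stated inequality exactly, and the hypothesis $m_2^\prime(2,q)\ge(5q+25)/6$ indeed licenses the use of Theorem~\ref{t:7.6}.

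However, the two steps you yourself flag are where essentially all of the mathematical content of \cite{stoL02} lives, and neither is actually carried out, so this is a skeleton rather than a proof. For step \emph{(i)} you offer only the intention to do ``a counting argument''; the specific threshold $\frac{1}{48}(41q^3+202q^2-47q)$ has to be shown to force a plane with more than $m_2^\prime(2,q)$ points of $\cK$, and nothing in the proposal does this. For step \emph{(ii)}, your reduction is: every secant of $\cK$ through a point $R\in\cC\setminus\cK$ lies in an ordinary member of the pencil, so it suffices that every member contains such a secant. But completeness of $\cK$ only guarantees \emph{one} secant through each of the at most $q+1-t$ points of $\cC\setminus\cK$, while there are $q+1$ hyperplanes to cover; no counting is given to bridge this, and a single uncovered ovoidal hyperplane (contributing up to $q^2+1$ in place of $m_2^\prime(3,q)$) destroys the estimate, as you note. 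The case $t=q+1$, where $\cC\setminus\cK=\emptyset$ and the mechanism has nothing to work with, is only treated in the sub-case where some section is a \emph{full} elliptic quadric rather than a large proper subset of one. Until these two steps are supplied, the argument does not establish the theorem.
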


\indent Bounds for $m_2(n, q)$, $n > 4$ and $q$ odd, can now be calculated using Hill's
Theorem \ref{t:6.5}.

\begin{rem}\label{r:7.12}
In  \cite{stoL02},
small improvements of Theorem \ref{t:6.5} are obtained.
\end{rem}

Relying on Theorems \ref{t:6.5} and \ref{t:7.9}, the following results are obtained.

\begin{theorem}\label{t:7.13} 
{\rm(\cite{thaJ18b})} 
\begin{enumerate}[\rm(i)]
\item  $m_2(4, 8) \leq 479$.
\item  $m_2(4, q) < q^3 - q^2 + 2\sqrt{5}q - 8$, $q$ even, $q > 8$.
\item  $m_2(4, q) < q^3 - 2q^2 + 3q\sqrt{q} + 8q - 9\sqrt{q} - 2$, $q$ even, $q \geq 2048$.
\end{enumerate}
\end{theorem}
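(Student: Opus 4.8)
The plan is to derive all three parts directly from Hill's inequalities (Theorem \ref{t:6.5}) applied in the case $n=4$, feeding in the bounds on $m_2(3,q)$ coming from Theorems \ref{t:7} and \ref{t:7.9}. The first point in Hill's theorem gives $m_2(4,q)\le q\,m_2(3,q)-(q+1)$ for $q>2$, so everything reduces to a good upper estimate for $m_2(3,q)$, the size of the largest cap in $\PG(3,q)$ that is not forced to be an ovoid. Since an ovoid has size $q^2+1$ and Theorem \ref{thm3.2.1}(i) says no cap exceeds $q^2+1$, we have $m_2(3,q)=q^2+1$ unless the largest cap is in fact a non-ovoid complete cap, in which case Theorems \ref{t:7} and \ref{t:7.9} apply. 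The subtlety is that $m_2(3,q)$ is by definition the size of the \emph{largest} cap, which is $q^2+1$ (attained by an elliptic quadric); so the substitution one actually wants is $m_2(4,q)\le q(q^2+1)-(q+1)=q^3+q^2-q-1$, and then one must argue separately that any $4$-cap meeting this crude bound cannot have too many of its plane sections be ovoids — this is where the non-ovoid bounds enter.

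First I would set up the plane-section counting argument for a putative large $k$-cap $\cK$ in $\PG(4,q)$: fix a point $P\in\cK$ and project from $P$, or alternatively count incidences between the points of $\cK$ and the hyperplanes (solids) of $\PG(4,q)$ through $P$; each solid section is a cap of $\PG(3,q)$, hence has at most $q^2+1$ points, and meets $\cK$ in at least two points (namely $P$ and, generically, one more). Summing $|\cK\cap\Pi_3|$ over the $q^2+q+1$ solids through a fixed secant line recovers exactly Hill's bound. To sharpen it for part (ii), I would show that if $\cK$ had size as large as $q^3-q^2+2\sqrt5\,q-8$ then the average solid section through a point of $\cK$ would be forced to have size exceeding $q^2-(\sqrt5-1)q+5$, so by Theorem \ref{t:7} all these sections are ovoids; a standard argument (as in the proof that large caps in $\PG(4,q)$ extend, cf.\ the methods behind Theorem \ref{t:7.10}) then shows $\cK$ itself would be contained in a cap structure that is too large, a contradiction. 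Parts (i) and (iii) are the same argument with the numerical input changed: for (i) one plugs $q=8$ into Hill's bound together with the explicit non-ovoid bound of Theorem \ref{t:7} to get $m_2(4,8)\le 8\cdot(q^2-(\sqrt5-1)q+5)-9$ rounded down, which yields $479$; for (iii) one uses Theorem \ref{t:7.9}'s stronger bound $k<q^2-2q+3\sqrt q+2$, valid for $q\ge 2048$, in place of Theorem \ref{t:7}, giving $m_2(4,q)<q(q^2-2q+3\sqrt q+2)-(q+1)=q^3-2q^2+3q\sqrt q+2q-q-1=q^3-2q^2+3q\sqrt q+q-1$, and then absorbing the $9\sqrt q$ and constant corrections that come from the more careful bookkeeping of the low-order terms.

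The main obstacle is the step that upgrades the crude Hill-type bound into the stated bound: it is not enough to substitute $m_2(3,q)=q^2+1$ naively, because that only gives $q^3+q^2-q-1$, which is weaker than (ii) and (iii). One genuinely needs the dichotomy "either a solid section is an ovoid, or it is small (Theorem \ref{t:7} / \ref{t:7.9})," combined with an averaging argument showing that not \emph{all} solid sections through a point of a maximal cap can be ovoids — otherwise the cap would inherit too much structure. Making this averaging quantitatively tight enough to land on the $-q^2+2\sqrt5\,q-8$ and $-2q^2+3q\sqrt q+8q-9\sqrt q-2$ error terms, respectively, is the delicate part; it requires tracking how many secant lines through $P$ can lie in ovoidal solids versus non-ovoidal ones, and is presumably where the detailed calculation in \cite{thaJ18b} does its work. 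The small case $q=8$ in (i) is then just a careful arithmetic specialisation, taking the floor at the end.
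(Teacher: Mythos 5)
This survey states Theorem \ref{t:7.13} without proof, attributing it to \cite{thaJ18b} and indicating only the ingredients (Hill's Theorem \ref{t:6.5} together with the bounds of Theorems \ref{t:7} and \ref{t:7.9} on non-ovoid complete caps of $\PG(3,q)$). You have correctly identified those ingredients, but your sketch does not constitute a proof, and its one concrete quantitative step is not justified. The heart of the matter is exactly the step you defer: a hyperplane section of a $k$-cap $\cK$ of $\PG(4,q)$ is a cap of $\PG(3,q)$ that need \emph{not} be complete, so Theorems \ref{t:7} and \ref{t:7.9} do not apply to it directly; one must complete each section and then dispose of the branch in which the completion is an ovoid. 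You assert that ``a standard argument then shows $\cK$ would be contained in a cap structure that is too large,'' but this is precisely the content of the theorem, not a routine averaging step, and nothing in your write-up supplies it.

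The numerics confirm that your shortcut cannot be the actual argument, because it ``proves'' statements \emph{stronger} than the theorem. For (i), your formula $8\bigl(q^{2}-(\sqrt5-1)q+5\bigr)-9$ at $q=8$ equals about $463.9$, not $479$. For (ii), the substitution $q\,M-(q+1)$ with $M=q^{2}-(\sqrt5-1)q+5$ gives $q^{3}-(\sqrt5-1)q^{2}+4q-1$, whose $q^{2}$-coefficient $-(\sqrt5-1)\approx-1.236$ beats the stated $-1$. For (iii), $q\bigl(q^{2}-2q+3\sqrt q+2\bigr)-(q+1)=q^{3}-2q^{2}+3q\sqrt q+q-1$, which for large $q$ is smaller than the stated $q^{3}-2q^{2}+3q\sqrt q+8q-9\sqrt q-2$; the ``corrections'' you propose to absorb go the wrong way. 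Obtaining bounds sharper than the theorem from a one-line substitution is a clear signal that the substitution is unjustified: the genuine proof in \cite{thaJ18b} must lose ground exactly at the point where your argument is vague, namely in handling sections that complete to ovoids and sections that are incomplete. (Also a small slip: $q(q^{2}+1)-(q+1)=q^{3}-1$, not $q^{3}+q^{2}-q-1$.) As it stands, the proposal is a plausible plan with the decisive lemma missing.
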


\begin{theorem}\label{t:7.14} 
{\rm(\cite{thaJ18b})} 
For $q$ even$,\ q > 2,\ n \geq 5,$
\begin{enumerate}[\rm(i)]
\item  $m_2(n, 4) \leq {\frac{118}{3}}4^{n - 4} + \frac{5}{3};$

\item  $m_2(n, 8) \leq 478.8^{n - 4} - 2(8^{n - 5} + 8^{n - 6} + \cdots + 8 + 1) + 1;$

\item  $m_2(n, q) < q^{n - 1} - q^{n - 2} + 2\sqrt{5}q^{n - 3} - 9q^{n - 4} - 2(q^{n - 5} + 
q^{n - 6} + \cdots + 1) + 1,$ 
for $q > 8;$

\item  
$
m_2(n, q)  < q^{n - 1} - 2q^{n - 2} + 3q^{n - 3} \sqrt{q} + 8q^{n - 3} - 9q^{n - 4}\sqrt{q} - 7q^{n - 4}$

$\hspace*{2cm} - 2(q^{n - 5} + q^{n - 6} + \cdots + q + 1) + 1,$
 
for $q \geq 2048$.
\end{enumerate}
\end{theorem}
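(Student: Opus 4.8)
The plan is to obtain all four bounds by lifting the known dimension‑four estimates to dimension $n$ through Hill's recursive inequalities of Theorem~\ref{t:6.5}. Under the hypotheses here ($q$ even, $q>2$, $n\ge 5$) part (ii) of Theorem~\ref{t:6.5} applies and gives
\[
m_2(n,q)\ \le\ q^{\,n-4}\,m_2(4,q)\ -\ q^{\,n-4}\ -\ 2\,\frac{q^{\,n-4}-1}{q-1}\ +\ 1 ,
\]
and the only supplementary fact needed is the identity $\dfrac{q^{\,n-4}-1}{q-1}=q^{\,n-5}+q^{\,n-6}+\cdots+q+1$, which rewrites the error term as the geometric sum appearing in each statement; equivalently one may iterate part (i) of Theorem~\ref{t:6.5} one dimension at a time, which yields the same bound.

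For (i) I would substitute the exact value $m_2(4,4)=41$ from Theorem~\ref{t:7.1}(iv): the displayed inequality becomes $m_2(n,4)\le 41\cdot 4^{\,n-4}-4^{\,n-4}-\tfrac23(4^{\,n-4}-1)+1$, whose coefficient of $4^{\,n-4}$ is $40-\tfrac23=\tfrac{118}{3}$ and whose constant term is $\tfrac23+1=\tfrac53$, which is exactly (i). For (ii) I would use $m_2(4,8)\le 479$ from Theorem~\ref{t:7.13}(i); the same substitution gives $m_2(n,8)\le 479\cdot 8^{\,n-4}-8^{\,n-4}-2(8^{\,n-5}+\cdots+8+1)+1$, and $479-1=478$ yields (ii).

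For (iii) and (iv) the field is no longer fixed, so I would feed the polynomial upper bounds on $m_2(4,q)$ of Theorem~\ref{t:7.13}(ii) and (iii) into the displayed inequality. Multiplying $q^3-q^2+2\sqrt{5}\,q-8$ by $q^{\,n-4}$, subtracting the extra $q^{\,n-4}$ (so that $-8q^{\,n-4}-q^{\,n-4}=-9q^{\,n-4}$), and expanding the geometric‑series term reproduces $q^{\,n-1}-q^{\,n-2}+2\sqrt{5}\,q^{\,n-3}-9q^{\,n-4}-2(q^{\,n-5}+\cdots+1)+1$, which is (iii). For (iv) the same computation with $q^3-2q^2+3q\sqrt{q}+8q-9\sqrt{q}-2$ in place of $m_2(4,q)$ gives the leading part $q^{\,n-1}-2q^{\,n-2}+3q^{\,n-3}\sqrt{q}+8q^{\,n-3}-9q^{\,n-4}\sqrt{q}$, after which the remaining low‑order terms are to be collected into the stated $-7q^{\,n-4}-2(q^{\,n-5}+\cdots+q+1)+1$.

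The mathematical substance lies entirely in Theorems~\ref{t:6.5} and~\ref{t:7.13}; what remains is careful bookkeeping of a geometric progression and of the strict inequalities. The one place that genuinely requires attention is the exact coefficient of $q^{\,n-4}$ in part (iv): here one must decide whether to use Theorem~\ref{t:6.5}(ii) in closed form or to iterate Theorem~\ref{t:6.5}(i) step by step, re‑using at each stage the sharper lower‑dimensional estimate just produced, and one must also track the integrality of $m_2(4,q)$ against the (possibly irrational) bound of Theorem~\ref{t:7.13}(iii); it is precisely there that the last few multiples of $q^{\,n-4}$ are gained, so this is the step I would expect to be the main obstacle to reproducing the stated constant.
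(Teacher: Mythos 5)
Your overall route is the one the paper indicates: the survey gives no proof of this theorem, saying only that it is obtained by feeding the dimension-four estimates (Theorem \ref{t:7.1}(iv) and Theorem \ref{t:7.13}) into Hill's recursion, Theorem \ref{t:6.5}(ii), and your observation that $\frac{q^{n-4}-1}{q-1}=q^{n-5}+\cdots+q+1$ is indeed the only bookkeeping required. Your verifications of (i), (ii) and (iii) are correct: $41\cdot 4^{n-4}-4^{n-4}-\frac{2}{3}(4^{n-4}-1)+1=\frac{118}{3}4^{n-4}+\frac{5}{3}$; the passage $479\mapsto 478$ gives (ii); and $-8q^{n-4}-q^{n-4}=-9q^{n-4}$ gives (iii) exactly, with the strict inequality preserved on multiplying by $q^{n-4}>0$.

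Part (iv), however, is not established by your argument, and the obstacle you flag at the end is real, not mere bookkeeping. Substituting Theorem \ref{t:7.13}(iii) into Theorem \ref{t:6.5}(ii) produces the coefficient $-2q^{n-4}-q^{n-4}=-3q^{n-4}$, which falls short of the stated $-7q^{n-4}$ by $4q^{n-4}$. Neither of the remedies you propose can supply the difference: iterating Theorem \ref{t:6.5}(i) step by step gives $m_2(n,q)\le q^{n-4}m_2(4,q)-(q+1)\frac{q^{n-4}-1}{q-1}$, which is algebraically identical to Theorem \ref{t:6.5}(ii) because $q^{n-4}-1+2\,\frac{q^{n-4}-1}{q-1}=(q^{n-4}-1)\frac{q+1}{q-1}$, so nothing is gained by iterating; and exploiting the integrality of $m_2(4,q)$ against the irrational bound of Theorem \ref{t:7.13}(iii) improves the multiplier of $q^{n-4}$ by less than one unit, not four. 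Consequently, either one of the constants quoted in Theorem \ref{t:7.13}(iii) or in part (iv) here is a misprint, or the source \cite{thaJ18b} passes through a sharper intermediate bound on $m_2(4,q)$ than the one recorded in Theorem \ref{t:7.13}(iii). As written, your proposal proves (iv) only with $-3q^{n-4}$ in place of $-7q^{n-4}$, and you would need to consult the original derivation to recover the stated constant.
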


\begin{rem}\label{r:7.15}
For a  survey  on caps, see Hirschfeld and Storme  
\cite{hirJ01c}.
\end{rem}

\section{Generalised ovals}
\label{sec6}

\subsection{Introduction}
Arcs, ovals and hyperovals   can be generalised by replacing 
their points with $(n-1)$-dim\-ensional subspaces, $n\geq 1$,  to get
\emph{generalised $k$-arcs}, \emph{pseudo-ovals} and \emph{pseudo-hyperovals}.

These objects were defined in 1971 by Thas \cite{thaJ71a}. In 1973, Thas
\cite{thaJ73g}, the relation between pseudo-ovals and generalised
quadrangles was discovered. In 1974, Thas \cite{thaJ74f} showed that these
pseudo-ovals play a key role in the theory of translation generalised
quadrangles with the same number of points and lines.

\subsection{Generalised $k$-arcs}

\begin{defn}

\begin{enumerate}[\rm(1)]
\item A \emph{generalised $k$-arc} in $\PG(3n - 1,q)$ is a set $\cK$ of
$(n-1)$-dimensional subspaces, with $|\cK| =k\geq 3$, such that no three of
its elements lie in a hyperplane.

\item $\cK$ is \emph{complete} if it is not properly contained in a larger
generalised arc.
\end{enumerate}

\end{defn}

\begin{example}
\begin{enumerate}[\rm(1)]

\item For $n=1$, then the $k$-arcs of $\PG(2,q)$ arise.

\item For $n=2$, then $\cK$ is a set of  $k$ lines in $\PG(5,q)$ such that
every three generate the space.
\end{enumerate}

\end{example}

\begin{theorem} {\rm(\cite{thaJ71a})}
\begin{enumerate}[\rm(i)]
\item For every generalised $k$-arc in $\PG(3n - 1,q),$
\begin{enumerate}[\rm(a)]
\item $k\leq q^n + 2;$

\item $k\leq q^n + 1$ when $q$ is odd.

\end{enumerate}

\item In $\PG(3n - 1,q),$ there exist $(q^n + 1)$-arcs for every $q;$ for
$q$ even$,$ there exist $(q^n + 2)$-arcs.

\item If $O$ is a generalised $(q^n + 1)$-arc in $\PG(3n - 1,q),$ then each element
$\pi_i$ of $O$ is contained in exactly one $(2n-1)$-dimensional subspace
$\gt_i$ that is disjoint from all elements of $O\bsl\{\pi_i\};$ here$,$ $\gt_i$ is
the \emph{tangent space} of $O$ at $\pi_i$. 

\item For $q$ even$,$ all tangent spaces of a generalised 
$(q^n + 1)$-arc $O$ of $\PG(3n - 1,q)$ contain a common 
$(n-1)$-dimensional space $\pi,$ the \emph{nucleus} of $O$. Hence $O$ is
not complete and extends to a generalised 
$(q^n + 2)$-arc by adding its nucleus. 
\end{enumerate}

\end{theorem}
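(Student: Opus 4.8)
The plan is to reduce parts (i)--(iv) to the classical case $n=1$ via the standard field-reduction (Desarguesian spread) argument. First, recall that $\PG(3n-1,q)$ carries a Desarguesian spread $\mathcal{S}$ of $(n-1)$-dimensional subspaces, obtained by viewing $V(3n,q)$ as $V(3,q^n)$; the quotient geometry on $\mathcal{S}$ is isomorphic to $\PG(2,q^n)$, and $t$ spread elements span a subspace of projective dimension $rn-1$ in $\PG(3n-1,q)$ exactly when the corresponding $t$ points of $\PG(2,q^n)$ span a subspace of dimension $r-1$. So a set of spread elements is a generalised $k$-arc in the sense of the definition precisely when the corresponding point set is a $k$-arc of $\PG(2,q^n)$ — provided one first checks that every $(n-1)$-subspace appearing in a generalised arc may be taken to be a spread element, which is the content of the embedding argument below. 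Granting this, (i)(a), (i)(b) and the existence statements in (ii) follow immediately by transporting Theorem~3.2 and Theorem~3.5(iii) (conics give $(q^n+1)$-arcs; for $q$ even a $(q^n+1)$-arc extends to a $(q^n+2)$-arc) across the correspondence with $q$ replaced by $q^n$.

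The technical heart is therefore the claim that a generalised $k$-arc $\cK=\{\pi_1,\dots,\pi_k\}$ of $\PG(3n-1,q)$ is always ``regular'', i.e. its elements are mutually disjoint and any three of them generate a $\PG(3n-1,q)$ that is partitioned by a Desarguesian spread containing $\pi_1,\pi_2,\pi_3$; equivalently, $\cK$ is (a subset of) a spread. For this I would first use the hypothesis that no three elements lie in a hyperplane: any two distinct $\pi_i,\pi_j$ must be disjoint (else they lie in a subspace of dimension $\le 2n-2$, hence with any third element in a subspace of dimension $\le 3n-2$, a hyperplane), and any three pairwise-disjoint $(n-1)$-subspaces that together span $\PG(3n-1,q)$ can be simultaneously coordinatised so that they become $\{(x,0,0)\}$, $\{(0,y,0)\}$, $\{(x,x,0)\}$-type subspaces attached to the regulus/spread they determine. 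One then extends $\pi_4,\dots,\pi_k$ into the same Desarguesian spread by the same disjointness argument applied to each triple. The references \cite{thaJ71a} (see also the field-reduction account in \cite{hirJ85}) carry this out in detail; I would reproduce it as a short lemma.

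For (iii) and (iv): under the correspondence, the unique tangent space $\tau_i$ of the generalised $(q^n+1)$-arc at $\pi_i$ corresponds to the unique tangent \emph{line} of the $(q^n+1)$-arc (oval) of $\PG(2,q^n)$ at the point $\overline{\pi_i}$, and the pullback of a line of $\PG(2,q^n)$ is exactly a $(2n-1)$-subspace of $\PG(3n-1,q)$ that is a union of spread elements; so existence and uniqueness of $\tau_i$, and its disjointness from all other $\pi_j$, transfer directly from the classical oval. Likewise, for $q$ even the oval in $\PG(2,q^n)$ has a nucleus point $N$ lying on every tangent; its preimage is a single spread element $\pi$, which is therefore contained in every $\tau_i$, and $\cK\cup\{\pi\}$ is a generalised $(q^n+2)$-arc because $\overline{\pi_1},\dots,\overline{\pi_{q^n+1}},N$ is a hyperoval. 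The only point needing care is that the tangent space is required to be disjoint from the \emph{other} elements rather than merely from $\pi_i$; this follows because a tangent line of an oval meets the oval only at its point of tangency, so its preimage meets $\bigcup_j\pi_j$ only in $\pi_i$. The main obstacle, then, is not any single hard estimate but making the field-reduction dictionary precise enough that ``no three in a hyperplane'' is seen to force membership in a common Desarguesian spread; once that lemma is in place, every assertion is a translation of a statement about arcs, ovals and hyperovals of $\PG(2,q^n)$ already recorded in Section~3.
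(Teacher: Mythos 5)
There is a genuine gap, and it sits exactly at what you call the ``technical heart''. Your whole reduction rests on the claim that the condition ``no three elements in a hyperplane'' forces the $(n-1)$-spaces of a generalised arc to lie in a common Desarguesian spread of $\PG(3n-1,q)$. That claim is not a short lemma hiding in \cite{thaJ71a} or \cite{hirJ85}: it is equivalent to saying that every pseudo-oval and pseudo-hyperoval is regular (elementary), which is precisely Open Problem 3 of this survey and is unresolved. (For the analogous objects one dimension up, the pseudo-ovoids with $q$ odd, non-regular examples genuinely exist, so the corresponding ``lemma'' is actually false there; this should make you suspicious of the pattern of argument.) What is true is only the local statement: any \emph{three} pairwise disjoint $(n-1)$-spaces spanning $\PG(3n-1,q)$ can be put into some Desarguesian spread, since they correspond to a direct sum decomposition $V=U_1\oplus U_2\oplus U_3$; but nothing forces the fourth, fifth, \dots\ elements into that same spread, and your sentence ``one then extends $\pi_4,\dots,\pi_k$ into the same Desarguesian spread by the same disjointness argument'' is exactly the step that does not go through. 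Consequently your proofs of (i), (iii) and (iv) only apply to regular generalised arcs, not to arbitrary ones. The one part of your proposal that is sound is the existence half of (ii): the field-reduction construction from ovals and hyperovals of $\PG(2,q^n)$ is indeed how the (regular) examples are built, and that direction of the dictionary needs no regularity hypothesis.

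The actual proofs are direct counting arguments in $\PG(3n-1,q)$ that generalise the classical plane arguments rather than quoting them. For (i)(a): for fixed $\pi_1$ the spans $\langle\pi_1,\pi_j\rangle$, $j\neq 1$, are $(2n-1)$-spaces which pairwise meet exactly in $\pi_1$ (otherwise three elements would lie in a hyperplane); each covers $q^n(q^n-1)/(q-1)$ of the $q^n(q^{2n}-1)/(q-1)$ points off $\pi_1$, so there are at most $q^n+1$ of them and $k\leq q^n+2$. For (iii), the same count shows that when $k=q^n+1$ the secant spans through $\pi_i$ leave uncovered exactly the point set of one further $(2n-1)$-space meeting the others only in $\pi_i$; one then has to show these residual points do form a subspace, which is the substantive step. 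Parts (i)(b) and (iv) follow by parity arguments on tangent spaces in the style of Qvist. If you want to salvage your write-up, replace the spread lemma by these counting arguments and keep the field-reduction only for the existence claims.
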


\subsection{Pseudo-ovals and pseudo-hyperovals}

\begin{defn}
\begin{enumerate}[\rm(1)]
\item A generalised $(q^n + 1)$-arc of $\PG(3n - 1,q)$ is
a \emph{generalised oval} or \emph{pseudo-oval}  or
 $[n-1]$-\emph{oval}  of $\PG(3n - 1,q)$. For $n=1$, a 
pseudo-oval is just an oval  of $\PG(2,q)$.

\item With $q$ even, a generalised $(q^n + 2)$-arc of $\PG(3n - 1,q)$ is
a \emph{generalised hyperoval} or \emph{pseudo-hyperoval}  or
 $[n-1]$-\emph{hyperoval}  of $\PG(3n - 1,q)$. For $n=1,$ a 
pseudo-hyperoval is just a hyperoval  of $\PG(2,q)$.

\end{enumerate}

\end{defn}

\begin{theorem}{\rm (Payne and Thas \cite{payS84,payS09})}
\begin{enumerate}[\rm(i)]

\item In $\PG(3n - 1,q),$ each hyperplane not containing a tangent space of
the pseudo-oval $O$ contains either $0$ or $2$ elements of $O$. When $q$ is
even$,$ each hyperplane contains either $0$ or $2$ elements of a 
pseudo-hyperoval.

\item For $q$ odd$,$ each point of $\PG(3n - 1,q)$ not contained in an element
of the pseudo-oval $O$ belongs to either $0$ or $2$ tangent spaces of $O$.

\end{enumerate}

\end{theorem}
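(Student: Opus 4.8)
The plan is to reduce both parts to the classical case $n=1$ of Theorem 3.3 by a standard ``algebraic extension'' (field reduction) argument, which is the usual route for pseudo-ovals. Recall that $\PG(3n-1,q)$ with its spread of $(n-1)$-spaces determined by the pseudo-oval can be viewed inside $\PG(2,q^n)$ after identifying $V(3n,q)$ with $V(3,q^n)$; under this identification a generalised oval $O$ of $\PG(3n-1,q)$ becomes an oval (or at least a $(q^n+1)$-arc, hence an oval) of $\PG(2,q^n)$, its tangent spaces become the tangent lines of that oval, and hyperplanes of $\PG(3n-1,q)$ that contain an element of the $\Fq$-spread correspond to lines of $\PG(2,q^n)$. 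However, not every hyperplane of $\PG(3n-1,q)$ arises this way, so the statement is genuinely about the combinatorics in the big space and the field-reduction picture handles only the ``nice'' hyperplanes; the remaining hyperplanes require a direct counting argument.

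For part (i), first I would fix a hyperplane $H$ of $\PG(3n-1,q)$ that contains no tangent space $\gt_i$ of $O$, and count incidences between $H$ and the elements of $O$. Each $\pi_i\in O$ is an $(n-1)$-space, so $\pi_i\cap H$ is either all of $\pi_i$ or an $(n-2)$-space; write $a$ for the number of $\pi_i$ contained in $H$ and note the others meet $H$ in a hyperplane of $\pi_i$. The key input is that $O$ together with its tangent spaces behaves like a dual object: through a fixed $\pi_i$, the $q^n$ secant ``$(2n-1)$-spaces'' $\langle\pi_i,\pi_j\rangle$ together with the tangent space $\gt_i$ partition the $(2n-1)$-spaces through $\pi_i$ inside any hyperplane on $\pi_i$ in the expected way. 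A double count of the pairs $(\pi_i,\pi_j)$ with $\langle\pi_i,\pi_j\rangle\subseteq H$, combined with the fact that two elements of $O$ span a $(2n-1)$-space and three span the whole of $\PG(3n-1,q)$ (so no three lie in a hyperplane, giving an upper bound on incidences), forces $a\in\{0,2\}$; the case $a=1$ is excluded because a single $\pi_i\subseteq H$ would put $\gt_i$ or a secant space in a forced position contradicting $H\not\supseteq\gt_i$. The hyperoval case, $q$ even, is then immediate since a pseudo-hyperoval has no tangent spaces at all (the tangent spaces all pass through the nucleus and can be absorbed), so every hyperplane is of the above type and the dichotomy $0$ or $2$ holds universally.

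For part (ii), with $q$ odd, I would dualise: a point $P$ of $\PG(3n-1,q)$ not on any element of $O$ lies in some number $t$ of tangent spaces $\gt_i$. Using Theorem 5.30(iii) (each $\pi_i$ lies in a unique tangent space $\gt_i$, of dimension $2n-1$) and part (i) just proved, I would set up the incidence count between $P$, the secant spaces $\langle\pi_i,\pi_j\rangle$, and the tangent spaces through $P$: the $(2n)$-spaces through $P$ partition into those meeting $O$ in $0$ and those meeting it in $2$ elements (by a hyperplane version of part (i) applied in the quotient $\PG(3n-1,q)/P\cong\PG(3n-2,q)$), and counting the $q^n+1$ elements of $O$ through this partition yields a linear relation in $t$. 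Parity of $q$ enters exactly as in Segre's classical lemma: the relevant count of tangent spaces through an external point is $\equiv q^n+1 \pmod 2$, which is even, forcing $t$ to be even, i.e. $t\in\{0,2,\dots\}$, and a crude upper bound (three elements of $O$, hence three tangent spaces, cannot be ``seen'' from one point without spanning too much) caps $t$ at $2$.

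The main obstacle I anticipate is part (i) for the hyperplanes that do \emph{not} come from lines of $\PG(2,q^n)$ under field reduction — one cannot simply quote the classical oval result there. Handling these requires the careful incidence/counting argument in $\PG(3n-1,q)$ itself, keeping precise track of the dimensions of the intersections $\pi_i\cap H$ and of the secant and tangent subspaces, and verifying that the ``no three elements of $O$ in a hyperplane'' condition is exactly strong enough to kill the value $a=1$. Once that combinatorial core is in place, the $q$-even hyperoval statement and the dual point-version in (ii) follow by routine adaptation of Segre's parity argument.
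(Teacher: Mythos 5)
Your opening premise is not available: identifying $V(3n,q)$ with $V(3,q^n)$ so that $O$ ``becomes an oval of $\PG(2,q^n)$'' is exactly the assertion that $O$ is regular, which is Open Problem 3 of this survey; a general pseudo-oval carries no such field-reduction structure. Fortunately you do not really use it. For part (i) your skeleton is the right one, but the step you leave implicit is the whole content: one must first show that the $q^n$ secant spaces $\langle\pi_i,\pi_j\rangle$ together with $\gt_i$ induce an $(n-1)$-spread of the quotient $\PG(3n-1,q)/\pi_i\cong\PG(2n-1,q)$ (pairwise disjointness there comes from ``no three in a hyperplane'' and the definition of $\gt_i$; the cardinality count $(q^n+1)(q^n-1)/(q-1)=(q^{2n}-1)/(q-1)$ shows they cover everything), and then that a hyperplane of $\PG(2n-1,q)$ contains \emph{exactly one} element of an $(n-1)$-spread (a one-line point count). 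That is what kills $a=1$: a hyperplane $H\supseteq\pi_i$ with $H\not\supseteq\gt_i$ must contain exactly one secant through $\pi_i$, hence a second element of $O$. The pseudo-hyperoval case follows the same way with the $q^n+1$ secants alone forming the spread. So (i) is correct in outline, merely under-argued.

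Part (ii) contains a genuine gap. Your parity argument is fine: for each $i$ the point $P$ lies in exactly one of the $q^n+1$ spread members through $\pi_i$, the map sending $i$ to the index of the secant containing $P$ is a fixed-point-free involution on the indices with $P\notin\gt_i$, so the number $t$ of tangent spaces on $P$ is congruent to $q^n+1$, hence even for $q$ odd. But your proposed cap $t\le 2$ --- that three tangent spaces through $P$ would ``span too much'' --- is not a valid argument: three $(2n-1)$-spaces of $\PG(3n-1,q)$ can perfectly well share a point, and indeed for $q$ even \emph{all} $q^n+1$ tangent spaces share an $(n-1)$-space (the nucleus), so no dimension count can rule this out; parity is the only thing separating the odd and even cases. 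The missing step is Qvist's double count: fix $\gt_{i_0}$, note every point $R$ of $\gt_{i_0}\setminus\pi_{i_0}$ lies on no element of $O$ and hence on an even number $t_R\ge 2$ of tangent spaces, while $\sum_R t_R = |\gt_{i_0}\setminus\pi_{i_0}| + \sum_{j\ne i_0}|\gt_{i_0}\cap\gt_j|$ with $\dim(\gt_{i_0}\cap\gt_j)\ge (2n-1)+(2n-1)-(3n-1)=n-1$ and $\gt_{i_0}\cap\gt_j$ disjoint from $\pi_{i_0}$; comparing the two bounds forces $t_R=2$ for every such $R$, and any point on at least one tangent space and on no element of $O$ is such an $R$. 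Without this (or an equivalent) count, the claim $t\in\{0,2\}$ is not established.
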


\subsection{Regular pseudo-ovals and regular pseudo-hyperovals}

In the extension $\PG(3n - 1,q^n)$ of the space $\PG(3n - 1,q)$, take $n$
planes $\xi_i,\ i=1,2,\ldots,n$, that are conjugate for the extension $\Fqn$ of
$\Fq$ and which span $\PG(3n - 1,q^n)$. Thus they form an orbit of
the Galois group corresponding to the extension and span $\PG(3n - 1,q^n)$.

In $\xi_1$, consider an oval 
$$
O_1 = \{P_0^{(1)},P_1^{(1)},\ldots,P_{q^n}^{(1)}\}
$$ 
or a hyperoval 
$$
O_{1}' = \{P_0^{(1)},P_1^{(1)},\ldots,P_{q^n +1}^{(1)}\}.
$$ 
Next, for $i = 0, 1, \ldots, q^n$ or $i = 0, 1, \ldots, q^n + 1$,
let $P_i^{(1)},P_i^{(2)},\ldots,P_i^{(n)}$ be conjugate in $\bF_{q^n}$ over
$\Fq$. These points define an $(n-1)$-dimensional subspace $\pi_i$ over $\Fq$. Consequently, 
$O = \{\pi_0, \pi_1,\ldots,\pi_{q^n} \}$ is a pseudo-oval and $O' = \{\pi_0, \pi_1,\ldots,\pi_{q^n +1} \}$ 
 is a  pseudo-hyperoval of $\PG(3n - 1,q^n)$.


These are the \emph{regular} or \emph{elementary} pseudo-ovals and the 
\emph{regular} or \emph{elementary} pseudo-hyperovals of
$\PG(3n - 1,q)$. If $O_1$ is a conic in $\PG(2,q^n)$, then the corresponding
 pseudo-oval is a \emph{classical pseudo-oval} or \emph{pseudo-conic}.
 
Alternatively, let $V$ be the vector space over $\Fqn$ underlying the projective
plane $\PG(2,q^n)$. If $V$ is considered as an \F-vector space, each point
of  $\PG(2,q^n)$ becomes an $(n-1)$-dimensional subspace of $\PG(3n - 1,q)$.
If $O_1$ is an oval or hyperoval of $\PG(2,q^n)$, then here it becomes a 
regular pseudo-oval or regular pseudo-hyperoval of $\PG(3n - 1,q)$.

\begin{rem}
Every known  pseudo-oval and pseudo-hyperoval is regular. By Segre's theorem,
for $q$ odd every regular  pseudo-oval is a pseudo-conic.  
\end{rem}

\noindent
{\bf Open problem 3.}
Is every pseudo-oval regular? Is every pseudo-hyperoval regular?

\begin{theorem}{\rm (Payne and Thas \cite{payS84,payS09})}
For $q$ odd, the tangent spaces of a pseudo-oval $O$ in $\PG(3n - 1,q)$ 
form  a pseudo-oval $O^*$ in the dual space of $\PG(3n - 1,q)$.
\end{theorem}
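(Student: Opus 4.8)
The plan is to reduce the statement for general $n$ to the known case $n=1$ by passing to a dual configuration and verifying that the "no three in a hyperplane" condition is preserved. First I would set $O=\{\pi_0,\pi_1,\ldots,\pi_{q^n}\}$ and recall from the earlier theorem (Thas) that each $\pi_i$ is contained in a unique tangent space $\tau_i$, a $(2n-1)$-dimensional subspace disjoint from every $\pi_j$ with $j\neq i$. Write $N=3n-1$. In the dual space $\PG(N,q)^*$, a $(2n-1)$-subspace of $\PG(N,q)$ corresponds to an $(N-1-(2n-1))=(n-1)$-subspace, so $O^*=\{\tau_0^*,\tau_1^*,\ldots,\tau_{q^n}^*\}$ is a set of $q^n+1$ subspaces of dimension $n-1$ in the dual space. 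To show $O^*$ is a pseudo-oval it remains to check that no three of the $\tau_i^*$ lie in a common hyperplane of $\PG(N,q)^*$; dually this says that no three of the tangent spaces $\tau_i,\tau_j,\tau_k$ pass through a common point of $\PG(N,q)$.

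So the crux is the claim: \emph{for $q$ odd, three distinct tangent spaces of $O$ have no common point}. Here I would invoke the earlier Payne--Thas theorem, part (ii): for $q$ odd, each point $P$ of $\PG(3n-1,q)$ not lying in an element of $O$ is contained in either $0$ or $2$ tangent spaces of $O$. Thus no point outside the elements of $O$ lies on three tangent spaces. The remaining possibility to rule out is a point $P$ lying on some element $\pi_\ell$ of $O$ and simultaneously on three tangent spaces $\tau_i,\tau_j,\tau_k$; but $\tau_i$ is disjoint from $\pi_\ell$ whenever $i\neq\ell$, so at most one of $i,j,k$ can equal $\ell$, and the other two tangent spaces would then share the point $P$ with $\tau_\ell$ — reducing again to at most $2$ tangent spaces through a point outside the remaining elements, a contradiction. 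Hence the count $0$ or $2$ is genuinely global, and three tangent spaces never meet.

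With the incidence condition verified, I would finish by a dimension/counting check: $O^*$ consists of $q^n+1$ subspaces of dimension $n-1$ in $\PG(3n-1,q)^*$, no three in a hyperplane, so by definition it is a generalised $(q^n+1)$-arc, i.e.\ a pseudo-oval of the dual space. One should also note that the dual of a hyperplane containing $\tau_i$ and $\tau_j$ is a point on $\pi_i^*$-type data, but this is not needed; what is needed is only that distinct $\tau_i$ are distinct (immediate, since $\tau_i\supseteq\pi_i$ and the $\pi_i$ are pairwise disjoint) and that the spanning/hyperplane condition holds, which is exactly the dual of the three-tangent-spaces statement above.

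The main obstacle I anticipate is precisely the case analysis for a point $P$ lying inside an element $\pi_\ell$: one must be careful that the Payne--Thas "$0$ or $2$" statement is about points \emph{outside} all elements of $O$, and argue separately (using the disjointness of $\tau_i$ from $\pi_j$, $j\neq i$) that no interior point can be forced onto three tangent spaces either. Everything else is bookkeeping with the standard duality $\dim + \dim^* = N-1$ in $\PG(N,q)$ and the fact that duality is an inclusion-reversing bijection on subspaces, so it carries "contained in a common hyperplane" to "containing a common point" and hence a generalised arc to a generalised arc.
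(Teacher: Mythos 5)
Your argument is correct: the paper states this theorem without proof (citing Payne and Thas), and the standard proof is exactly your reduction --- dualise so that ``no three $\tau_i^*$ in a hyperplane'' becomes ``no point of $\PG(3n-1,q)$ on three tangent spaces'', then conclude from the $0$-or-$2$ count for points not in an element of $O$ (part (ii) of the Payne--Thas theorem quoted earlier in the paper) together with the disjointness $\tau_i\cap\pi_\ell=\emptyset$ for $i\neq\ell$ to handle points lying in an element of $O$. The only cosmetic issue is that your case analysis for $P\in\pi_\ell$ is more roundabout than necessary: disjointness alone shows such a $P$ lies on exactly one tangent space, namely $\tau_\ell$, so no contradiction argument is needed there.
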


\begin{defn}
The pseudo-oval $O^*$ is the \emph{translation dual} of the pseudo-oval $O$.
\end{defn}

\noindent
{\bf Open problem 4.} For $q$ odd, is every pseudo-oval $O$ isomorphic to its
translation dual?

\section{Characterisations}
\label{sec7}

\subsection{Pseudo-ovals, pseudo-hyperovals and spreads}
Let $O = \{\pi_0, \pi_1,\ldots,\pi_{q^n}\}$ be a pseudo-oval in $\PG(3n - 1,q)$.
The tangent space of $O$ at $\pi_i$ is $\gt_i$. Choose $i \in \{0, 1, \ldots,
q^n\}$ and let $\Pi_{2n-1}$ be skew to $\pi_i$. Further, let 
$\gt_i\cap \Pi_{2n-1} = \eta_i$ and 
$\langle \pi_i,\pi_j \rangle \cap \Pi_{2n-1} =\eta_j$ for $j\neq i$; here,
$\langle \pi_i,\pi_j \rangle$ is the subspace generated by $\pi_i$ and $\pi_j$.
Then $\{\eta_0, \eta_1,\ldots,\eta_{q^n} \} = \gD_i$ is an $(n-1)$-spread of $\Pi_{2n-1}$; 
that is, the elements of  $\gD_i$ partition $\Pi_{2n-1}$.

Now, let $q$ be even and let $\pi$ be the nucleus of $O$. Let $\Pi_{2n-1} \subset \PG(3n - 1,q)$ 
be skew to $\pi$.
If $\xi_i = \Pi_{2n-1}\cap \langle\pi,\pi_i \rangle$, then 
$\{\xi_0, \xi_1,\ldots,\xi_{q^n} \} = \gD$ is an $(n-1)$-spread of $\Pi_{2n-1}$.

Next, let $q$ be odd. Choose $\gt_i$ for $i \in \{0, 1, \ldots, q^n\}$. 
If $\gt_i \cap \gt_j = \gd_j$ with $j\neq i$, then
\[
\{\gd_0,\gd_1,\ldots,\gd_{i-1},\pi_i,\gd_{i+1},\ldots,\gd_{q^n}\} = \gD_i^*
\] 
is an $(n-1)$-spread of $\gt_i$.


\begin{defn}
Let $V$ be the $2$-dimensional vector space defining the projective line
$\PG(1,q^n)$. Considering $V$ as an $\Fq$-vector space, each point of
$\PG(1,q^n)$ becomes an $(n-1)$-dimensional subspace of $\PG(2n - 1,q)$. The
$(n-1)$-spread of $\PG(2n - 1,q)$ consisting of these $q^n + 1$ subspaces is a
\emph{regular spread} of $\PG(2n - 1,q)$. 
\end{defn}

\begin{theorem}{\rm (Casse, Thas and Wild \cite{casL85})} 
Let $O$ be a pseudo-oval of $ \PG(3n - 1,q),$ with $q$ odd. Then at least one of the 
$(n-1)$-spreads
\[
\gD_0,\gD_1,\ldots,\gD_{q^n},\gD_0^*,\gD_1^*,\ldots,\gD_{q^n}^*
\]
is regular if and only if $O$ is regular$,$ that is$,$ if and only if $O$ is a pseudo-conic.
\end{theorem}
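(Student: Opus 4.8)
The plan is to prove the two implications separately, using the field-reduction dictionary between $\PG(3n-1,q)$ and $\PG(2,q^n)$ that underlies the definition of regular pseudo-ovals. For the easy direction, suppose $O$ is regular, hence (by Segre's theorem for $q$ odd, quoted in the preceding remark) a pseudo-conic arising from a conic $O_1$ in $\PG(2,q^n)$. First I would check that every one of the spreads $\gD_i$ and $\gD_i^*$ is then a regular $(n-1)$-spread. The key observation is that the tangent spaces $\gt_i$ of $O$ are exactly the field-reduction images of the tangent lines of the conic $O_1$, and the pencil of lines through a fixed point of $\PG(2,q^n)$ reduces to a regular spread of the corresponding $\PG(2n-1,q)$, by the definition of regular spread given just above the theorem. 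Concretely, $\gD_i$ is the reduction of the pencil of lines of $\PG(2,q^n)$ through the point corresponding to $\pi_i$ (the line $\langle\pi_i,\pi_j\rangle$ meets $\Pi_{2n-1}$ in the reduction of the secant, the tangent $\gt_i$ in the reduction of the tangent line), so $\gD_i$ is regular; similarly $\gD_i^*$ lives inside $\gt_i$, which is the reduction of a line of $\PG(2,q^n)$, and $\gD_i^*$ is the reduction of the pencil of lines meeting that tangent line, again a regular spread.

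For the substantive direction, assume that one of the listed spreads, say $\gD_i$ in $\Pi_{2n-1}$, is regular (the argument for a $\gD_i^*$ being dual). A regular $(n-1)$-spread of $\PG(2n-1,q)$ is, by the Segre/Bruck--Bose correspondence, the field-reduction spread coming from an identification of $\Pi_{2n-1}$ with $\PG(1,q^n)$; equivalently it determines, up to the stabilising collineation group, a structure of $\PG(2,q^n)$ on the quotient geometry. Here I would use the standard fact that a regular spread of $\PG(2n-1,q)$ can be extended to a regular spread of a $\PG(3n-1,q)$ containing it in a way compatible with an embedding of $\PG(2,q^n)$, and then argue that the elements $\pi_j$ of $O$, being the cones from $\pi_i$ over the spread elements $\eta_j$ of $\gD_i$, correspond to points of the resulting $\PG(2,q^n)$. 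The upshot is that $O$ is projectively equivalent to a regular pseudo-oval built from an oval $O_1$ of $\PG(2,q^n)$; since $q$ is odd, Segre forces $O_1$ to be a conic, so $O$ is a pseudo-conic. The case where it is some $\gD_i^*$ that is regular is handled by passing to the translation dual $O^*$ (Theorem on translation duals, $q$ odd): the spreads $\gD_i^*$ of $O$ are the spreads $\gD_i$ of $O^*$, so regularity of a $\gD_i^*$ gives, by the case already done, that $O^*$ is a pseudo-conic, and then one checks that the translation dual of a pseudo-conic is again a pseudo-conic, whence $O$ itself is regular.

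The main obstacle I expect is the converse direction's first step: turning ``one spread $\gD_i$ is regular'' into ``$O$ is the field reduction of an oval of $\PG(2,q^n)$.'' Knowing that a single spread in the configuration is regular only gives a $\PG(2,q^n)$-structure on one $(2n-1)$-subspace, and one must propagate this coherently through the whole of $\PG(3n-1,q)$ so that all the $\pi_j$ simultaneously become $\Fqn$-points and all the tangent spaces $\gt_j$ become $\Fqn$-lines — this is where the real work lies, and it presumably uses the interplay between the pseudo-oval axioms (no three $\pi_j$ in a hyperplane, the tangent-space incidences) and the rigidity of a regular spread. The other delicate point is the bookkeeping in the $\gD_i^*$ case, where one must confirm that being a pseudo-conic is preserved under the translation-dual operation; this should follow because a conic is projectively self-dual, so its field reduction has a self-dual tangent structure, but it needs to be stated carefully in the language of the $O\mapsto O^*$ construction.
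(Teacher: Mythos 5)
Your forward implication is fine: for a regular pseudo-oval the spans $\langle\pi_i,\pi_j\rangle$ and the tangent spaces $\gt_i$ are the field reductions of the secant and tangent lines of the oval $O_1$ of $\PG(2,q^n)$, so each $\gD_i$ (and dually each $\gD_i^*$) is the reduction of a pencil of lines of $\PG(2,q^n)$, i.e.\ a regular spread. The reduction of the $\gD_i^*$ case to the $\gD_i$ case via the translation dual, together with the self-duality of the pseudo-conic, is also a legitimate organisation of the proof. Note, however, that the survey you are being compared against gives no proof at all --- the theorem is quoted from Casse, Thas and Wild \cite{casL85} --- so the only question is whether your argument stands on its own.

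It does not, and the gap is exactly where you locate it. Knowing that $\gD_i$ is regular equips the quotient of $\PG(3n-1,q)$ by $\pi_i$ (equivalently, the complement $\Pi_{2n-1}$) with a $\PG(1,q^n)$-structure, but this constrains only the \emph{directions} $\langle\pi_i,\pi_j\rangle$ from $\pi_i$, not the position of $\pi_j$ inside the $(2n-1)$-space $\langle\pi_i,\pi_j\rangle$: each such space contains a great many $(n-1)$-subspaces skew to $\pi_i$, and nothing you have said forces the $\pi_j$ to be the $\Fqn$-points of a coherently extended $\PG(2,q^n)$-structure. Your phrase ``the elements $\pi_j$ of $O$, being the cones from $\pi_i$ over the spread elements $\eta_j$'' is not correct as stated --- the $\pi_j$ are not determined by $\pi_i$ and $\eta_j$ --- and the ``standard fact'' about extending a regular spread of $\Pi_{2n-1}$ to one of $\PG(3n-1,q)$ does not by itself single out the extension adapted to $O$. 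The actual proof in \cite{casL85} gets this propagation from the associated translation generalised quadrangle / Laguerre plane: the regularity of a single spread is used to show that the kernel of the TGQ $T(O)$ contains $\Fqn$, and a pseudo-oval whose kernel is all of $\Fqn$ is elementary; only then does Segre's theorem finish the job. Without some substitute for this step (and a verification, not just an assertion, that $\gD_i^*$ for $O$ coincides with $\gD_i$ for $O^*$ and that the translation dual of a pseudo-conic is a pseudo-conic), what you have is a correct statement of the strategy and of the easy half, not a proof.
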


\begin{theorem}
\label{rvdv}
{\rm (Rottey, Van de Voorde \cite{rotS15a,rotS15b})}
Let $O$ be a pseudo-oval in $ \PG(3n - 1,q),$ with $q= 2^h,\; h> 1, \; n $ prime. Then all the 
$(n-1)$-spreads 
$
\gD_0,\gD_1,\ldots,\gD_{q^n}
$
are regular if and only if $O$ is regular.
\end{theorem}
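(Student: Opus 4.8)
The plan is to recast regularity of $O$ as a statement about Desarguesian spreads of the ambient space, and then to recognise the regularity of each $\gD_i$ from the presence or absence of such a spread. By the ``alternative'' description of regular pseudo-ovals, $O$ is regular exactly when $\PG(3n-1,q)$ carries a Desarguesian $(n-1)$-spread $\mathcal{D}$ having every $\pi_i$ among its elements: in that case $\mathcal{D}$ identifies $\PG(3n-1,q)$ with $\PG(2,q^n)$, and since each line of $\PG(2,q^n)$ is a $(2n-1)$-space contained in some hyperplane of $\PG(3n-1,q)$, the defining property of a pseudo-oval (``no three $\pi_i$ in a hyperplane'') forces the images $P_i$ of the $\pi_i$ to be an oval of $\PG(2,q^n)$, of which $O$ is then the field reduction. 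So it suffices to prove: every $\gD_i$ is regular if and only if such a $\mathcal{D}$ exists.

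Suppose first that $O$ is regular, with $\mathcal{D}$ as above, and fix $i$. I would view $\gD_i$ inside the quotient $Q_i:=\PG(3n-1,q)/\pi_i\cong\PG(2n-1,q)$ (equivalently, in a $(2n-1)$-space skew to $\pi_i$), where it consists of the images of the subspaces $\langle\pi_i,\pi_j\rangle$ $(j\neq i)$ and of $\gt_i$. For $q$ even all tangent spaces contain the nucleus $\pi$, so $\gt_i=\langle\pi_i,\pi\rangle$; hence these $q^n+1$ subspaces are precisely the $(2n-1)$-spaces through $\pi_i$ that are unions of $\mathcal{D}$-elements, that is, the lines of $\PG(2,q^n)$ through $P_i$. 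Their images therefore constitute the regular (Desarguesian) spread that $\mathcal{D}$ induces on $Q_i$; comparing cardinalities, $\gD_i$ equals this spread and is regular. This direction uses only field reduction.

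Conversely, suppose every $\gD_i$ is regular; the goal is to build a Desarguesian $(n-1)$-spread of $\PG(3n-1,q)$ through all the $\pi_i$. First I would fix $\pi_0$ together with a $(2n-1)$-space $\Pi$ disjoint from it, so that $\langle\pi_0,\Pi\rangle=\PG(3n-1,q)$, and project $O\bsl\{\pi_0\}$ from $\pi_0$ into $\Pi$; by construction the images lie among the members of $\gD_0$. Since $\gD_0$ is regular it makes the underlying $V(2n,q)$ into a $2$-dimensional $\Fqn$-space $W$ whose $\Fqn$-lines are exactly those members, i.e. it coordinatises $O$ ``as seen from $\pi_0$''. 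The next step is to upgrade this to an $\Fqn$-structure on all of $V(3n,q)$: one has the $\Fqn$-action on the complement $W$ of $\pi_0$ and must extend it to $\pi_0$ so that every $\langle\pi_0,\pi_j\rangle$ is $\Fqn$-invariant. For this I would invoke the regularity of a second spread, say $\gD_1$, to obtain the corresponding structure ``as seen from $\pi_1$'', and then use the remaining $\gD_i$ and the pseudo-oval incidences $\langle\pi_i,\pi_j,\pi_k\rangle=\PG(3n-1,q)$ to force all these local structures to agree, yielding the global spread $\mathcal{D}$.

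The hard part will be this last reconciliation, and it is also where the hypotheses enter. The map realising the scalar action will a priori be only additive and semilinear over some field lying between $\Fq$ and $\Fqn$, and the point of taking $n$ prime is that there is then no proper intermediate field, so one is reduced to the clean dichotomy ``$\Fq$-linear only'' versus ``genuinely $\Fqn$-linear'', the first alternative being ruled out once enough compatibility among the $\gD_i$ has been assembled. The restriction $q=2^h$ with $h>1$ excludes the prime-field case, in which this semilinearity analysis, and the link between pseudo-ovals and hyperovals of $\PG(2,q^n)$, degenerate, and it also lets one handle the genuinely even-characteristic features (the nucleus, and the o-polynomial description of ovals of $\PG(2,q^n)$). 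Once $\mathcal{D}$ is obtained, the first paragraph shows that $O$ is regular; this is also why, unlike the odd-order case, the statement must draw on all the $\gD_i$ rather than a single one.
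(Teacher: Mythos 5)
The paper itself contains no proof of this theorem: it is a survey statement cited to Rottey and Van de Voorde, with a remark pointing to a shorter proof in \cite{thaJ19}. So your proposal must stand on its own. Its first half does: the identification ``$O$ is regular if and only if $\PG(3n-1,q)$ admits a Desarguesian $(n-1)$-spread $\mathcal{D}$ containing every $\pi_i$'' is the standard field-reduction characterisation of elementary pseudo-ovals, and your deduction that each $\gD_i$ is then the regular spread induced on the quotient by $\pi_i$ from the pencil of lines of $\PG(2,q^n)$ through the corresponding point $P_i$ (with $\gt_i=\langle\pi_i,\pi\rangle$ accounting for the tangent line) is correct. But that is the easy direction and uses none of the hypotheses $q=2^h$, $h>1$, $n$ prime.

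The converse is the entire content of the theorem, and you do not prove it. You correctly reduce it to assembling the local $\Fqn$-structures furnished by the regular spreads $\gD_0,\gD_1,\ldots,\gD_{q^n}$ into a single global Desarguesian spread through all the $\pi_i$, and then you explicitly set aside ``the hard part\ldots this last reconciliation.'' What you supply in its place is a conjecture about where the hypotheses enter (no intermediate fields of $\Fqn/\Fq$ when $n$ is prime; $h>1$ to avoid the prime field), not an argument: the semilinear map you invoke is never constructed, the compatibility of the $\Fqn$-actions obtained from different quotients $\PG(3n-1,q)/\pi_i$ is never established, and no step is identified at which $q=2$ or $n$ composite would actually obstruct the construction --- which matters, since for $q=2$ the question is open (Open problem 5.I of this survey) and your sketch gives no indication of why it should fail there. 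Note also that each $\gD_i$ only endows a complement of $\pi_i$ with a $2$-dimensional $\Fqn$-structure; nothing in your outline forces these $q^n+1$ partial scalar actions to be restrictions of one $\Fqn$-action on $V(3n,q)$, and bridging exactly that gap is what the cited proofs (via ovoidal Laguerre planes in Rottey--Van de Voorde, or the direct geometric argument of \cite{thaJ19}) are about. As written, this is a correct reformulation plus a plan, with the theorem's substance left unproved.
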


\noindent
{\bf Open problem 5.} From this theorem, the following questions arise.
\begin{enumerate}[I.]
\item What happens when $q=2$?

\item What happens when $n$ is not prime?

\item What happens when not all spreads $\gD_i$ are regular?

\item What happens when at least one of the spreads $\gD_i$ is regular?
\end{enumerate}

\begin{rem}
In \cite{thaJ19}, a shorter proof of Theorem \ref{rvdv} is given and a slightly stronger result
obtained. Metsch and Van de Voorde \cite{metK20} used the considerations in   \cite{thaJ19}
to prove that it is sufficient to assume that at least $q^n - q + 3$ spreads are regular.
\end{rem}

\begin{defn} 
In $ \PG(3n - 1,q),$ let $\pi_1,\pi_2,\pi_3$ be mutually skew $(n-1)$-dimensional subspaces. Also, let 
$\gt_i$ be a $(2n-1)$-dimensional subspace containing $\pi_i$ but skew to $\pi_j$ and $\pi_k$, and let $\gt_i\cap \gt_j 
= \eta_k$ with $\{i,j,k\} = \{1,2,3\}$. The subspace generated by $\eta_i$ and $\pi_i$ is $\gz_i$. If the 
$(2n-1)$-dimensional spaces $\gz_1, \gz_2, \gz_3$ have an $(n-1)$-dimensional subspace in common, then 
$\{\pi_1, \pi_2, \pi_3\}$ and $\{\gt_1, \gt_2, \gt_3\}$ are in \emph{perspective}.
 \end{defn}

\begin{theorem}{\rm(\cite{thaJ11})}
Consider a pseudo-oval  $O =
\{\pi_0,\pi_1,\ldots,\pi_{q^n}\}$
of $ \PG(3n - 1,q),\ q$ odd$,$ and let $\gt_i$ be the tangent space of $O$ at $\pi_i$ for each $i$. If$,$ for
any three distinct $i,j,k,$ the triples 
$\{\pi_i, \pi_j,\pi_k\}$ and $\{\gt_i, \gt_j,\gt_k\}$ are in perspective$,$ then $O$ is a pseudo-conic. The converse also holds.  
\end{theorem}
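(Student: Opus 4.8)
I would prove the converse first, as it is the shorter half. By definition a pseudo-conic $O$ is the field reduction of a non-singular conic $C$ of $\PG(2,q^n)$: viewing the vector space $V(3,q^n)$ as an \F-vector space, each point of $\PG(2,q^n)$ becomes an $(n-1)$-space of $\PG(3n-1,q)$ and each line a $(2n-1)$-space, and this correspondence is a lattice embedding, meets and joins of $\bF_{q^n}$-subspaces being computed identically over $\Fq$. Moreover the field reduction of the tangent line $T_i$ to $C$ at $P_i$ contains $\pi_i$ and is disjoint from every other $\pi_j$, so by uniqueness it equals the tangent space $\gt_i$. Hence $\eta_k=\gt_i\cap\gt_j$ is the field reduction of the point $T_i\cap T_j$ and $\gz_i=\langle\eta_i,\pi_i\rangle$ the field reduction of the line $\langle T_j\cap T_k,P_i\rangle$, so the triples $\{\pi_i,\pi_j,\pi_k\}$ and $\{\gt_i,\gt_j,\gt_k\}$ are in perspective exactly when the three lines $\langle T_j\cap T_k,P_i\rangle$, $\langle T_k\cap T_i,P_j\rangle$, $\langle T_i\cap T_j,P_k\rangle$ of $\PG(2,q^n)$ are concurrent. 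But with respect to $C$ the polar of the vertex $T_i\cap T_j$ of the tangent triangle is the chord $\langle P_i,P_j\rangle$, so the contact triangle $P_iP_jP_k$ is the polar triangle of the tangent triangle, and by the classical theorem of Chasles a triangle and its polar triangle with respect to a conic are in perspective --- which is the concurrency wanted. (A one-line computation with $P_t=(1,t,t^2)$, $T_t:\,t^2x_0-2tx_1+x_2=0$ and $T_i\cap T_j=(1,(i+j)/2,ij)$ does the same.)

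For the direct implication I would work in coordinates. Fix three elements $\pi_0,\pi_1,\pi_\infty$ of $O$ together with their tangent spaces; by transitivity of $\mathrm{PGL}(3n,q)$ on frames of this type, take $V(3n,q)=V_1\oplus V_2\oplus V_3$ with $\pi_0=[V_1]$, $\pi_\infty=[V_3]$, $\gt_0=[V_1\oplus V_2]$, $\gt_\infty=[V_2\oplus V_3]$ and $\pi_1=\{(x,x,x)\mid x\in V(n,q)\}$. Then each remaining element is $\pi_t=\{(x,xA_t,xB_t)\mid x\in V(n,q)\}$ for matrices $A_t,B_t\in\mathrm{M}_n(\Fq)$ with $A_0=B_0=0$, $A_1=B_1=I$, and the pseudo-oval axioms become non-singularity conditions on the various matrices formed from the $A_t$ and $B_t$, a matrix version of the usual $o$-polynomial conditions. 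The spread $\gD_0$ obtained by projecting $O$ from $\pi_0$ onto $V_2\oplus V_3$ then has spread set $\{A_t^{-1}B_t\}\cup\{0,\infty\}$, so by the theorem of Casse, Thas and Wild quoted above it suffices to show that $\gD_0$ is regular, i.e.\ that $\{A_t^{-1}B_t\}\cup\{0\}$ is a subfield of $\mathrm{M}_n(\Fq)$ isomorphic to $\bF_{q^n}$.

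The heart of the matter is to extract that algebraic fact from the perspectivity hypothesis. First one must write each tangent space $\gt_t$ in these coordinates: $\gt_t$ is the unique $(2n-1)$-space on $\pi_t$ skew to every $\pi_s$, $s\neq t$, and solving this system expresses $\gt_t$ explicitly through the matrix family (in the pseudo-conic case $\gt_t$ is the field reduction of $t^2x_0-2tx_1+x_2=0$). With the $\gt_t$ available, each $\gt_i\cap\gt_j$ and each $\langle\gt_j\cap\gt_k,\pi_i\rangle$ is explicit, and the perspectivity of a triple becomes the vanishing of a determinant in the $A_t,B_t$. Feeding the perspectivity conditions for the triples $\{0,\infty,t\}$, $\{0,1,t\}$, $\{1,\infty,t\}$ and then a generic triple $\{r,s,t\}$ into these identities, and combining them, one should force both the conic relation $B_t=A_t^2$ (which reduces the spread set of $\gD_0$ to $\{A_t\}\cup\{\infty\}$) and the multiplicative closure of $\{A_t\}$, whence $\{A_t\}$ is a subfield isomorphic to $\bF_{q^n}$ and $\gD_0$ is regular.

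The step I expect to be the main obstacle is this last one. A single perspectivity is a rank condition on the intersection of three $(2n-1)$-spaces, hence one polynomial identity among the $A_t,B_t$; but showing that the \emph{whole family} of these identities collapses precisely to the \emph{commutative field} structure --- and not merely to some weaker closure property of the matrix set, which would not give regularity --- is delicate, and it is here that the non-singularity constraints of the pseudo-oval axioms and the precise choice of triples must be used. (As an alternative to Casse--Thas--Wild one might instead reconstruct $C$ directly, gluing the perspectivity centres of the triples $\{i,j,k\}$ into the field reduction of a conic, modelled on the classical $T_i\cap T_j=(1,(i+j)/2,ij)$; but the bookkeeping looks no lighter.)
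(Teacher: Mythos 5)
The survey states this theorem without proof, citing \cite{thaJ11}, so there is no in-paper argument to set yours against; it has to be judged on its own. Your converse is correct and essentially complete: by uniqueness of tangent spaces, $\gt_i$ is the field reduction of the tangent line $T_i$ of the conic in $\PG(2,q^n)$, so $\eta_k$ and $\gz_i$ are the reductions of $T_i\cap T_j$ and of $\langle T_j\cap T_k,P_i\rangle$, and the required common $(n-1)$-space comes from the classical perspectivity of a triangle inscribed in a conic with its tangent triangle (polar triangles are in perspective); the computation $T_i\cap T_j=\p(1,(i+j)/2,ij)$ settles it for $q$ odd.

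The direct implication, however, is not proved; it is a programme whose decisive step is absent, as you yourself acknowledge. The coordinatisation $\pi_t=\{(x,xA_t,xB_t)\}$, the identification of the spread set of $\gD_0$ with $\{A_t^{-1}B_t\}$, and the reduction via the theorem of Casse, Thas and Wild to the regularity of a single spread are all legitimate preparation, but the whole content of the theorem lies in the passage from ``every triple $\{\pi_i,\pi_j,\pi_k\}$, $\{\gt_i,\gt_j,\gt_k\}$ is in perspective'' to ``$\{A_t^{-1}B_t\}\cup\{0\}$ is a field isomorphic to $\Fqn$''. You have not written the tangent spaces $\gt_t$ in these coordinates, nor the identity expressing a single perspectivity, nor the manipulation by which the family of such identities yields additive and multiplicative closure of the matrix set rather than some weaker relation; the sentence beginning ``one should force both the conic relation $B_t=A_t^2$ and the multiplicative closure'' is a restatement of what must be proved. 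This is exactly where the analogue of Segre's Lemma of Tangents has to enter (compare the remark following the theorem in the survey), and without it the forward half of the statement is not established. So: converse sound, forward direction has a genuine gap at its core.
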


\begin{rem}
By Segre \cite{segB54, segB55a}, for $n=1$ and $q$ odd, the triples $\{\pi_i,\pi_j,\pi_k\}$
and $\{\gt_i,\gt_j,\gt_k\}$ are always in perspective, and so $O$ is a conic. Hence, for $q$
odd, every oval is a conic. To prove that the two triples are in perspective, Segre uses his famous
Lemma of Tangents; see Lemma 8.11 of \cite{hirJ98}. What happens for $n>1$?
\end{rem}

\subsection{The case $n=2$}

For $n=2$, a pseudo-oval $O$ consists of $q^2 + 1$ lines of $\PG(5,q)$, every
three of which generate the space.

\begin{theorem}{\rm (Shult and Thas \cite{shuE94})}
If the pseudo-oval $O$ is contained in a non-singular hyperbolic quadric
$\cH(5,q),$ with $q$ odd$,$ then $O$ is a pseudo-conic.
\end{theorem}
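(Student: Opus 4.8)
The plan is to pass to the Klein correspondence, turn $O$ into an ovoid of $\PG(3,q)$, and then invoke the classification of such ovoids for $q$ odd. First I would identify $\cH(5,q)$ with the Klein quadric of $\PG(3,q)$: its points are the lines of $\PG(3,q)$, its lines are the pencils of lines of $\PG(3,q)$ (the flags $(P,\sigma)$ with $P\in\sigma$, the pencil being all lines through $P$ lying in $\sigma$), and its two systems of planes are the stars (all lines through a fixed point) and the ruled planes (all lines in a fixed plane). Under this dictionary the pseudo-oval $O=\{\pi_0,\dots,\pi_{q^2}\}$ becomes a family of $q^2+1$ pencils $\pi_i\leftrightarrow(P_i,\sigma_i)$. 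The axiom that the $\pi_i$ are pairwise skew already forces $P_i\ne P_j$ and $\sigma_i\ne\sigma_j$ for $i\ne j$ (if $P_i=P_j$ the two pencils share the line $\sigma_i\cap\sigma_j$, so their Klein lines meet); the stronger axiom, that any three $\pi_i,\pi_j,\pi_k$ span $\PG(5,q)$, is what I would exploit next.

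The key step is to deduce that $\{P_0,\dots,P_{q^2}\}$ is an ovoid of $\PG(3,q)$ and that each $\sigma_i$ is its tangent plane at $P_i$. For $i\ne j$ the solid $\langle\pi_i,\pi_j\rangle$ meets $\cH(5,q)$ in a grid (assume for the moment it is non-degenerate) containing $\pi_i,\pi_j$ in one of its reguli; reading the grid back through the correspondence produces two skew lines $r_{ij},r'_{ij}$ of $\PG(3,q)$ with $P_i,P_j\in r_{ij}$, $r'_{ij}\subseteq\sigma_i\cap\sigma_j$, and, crucially, $P_j\notin\sigma_i$ and $P_i\notin\sigma_j$. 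The spanning axiom says exactly that $\pi_k$ is skew to each solid $\langle\pi_i,\pi_j\rangle$, hence that no line of the pencil $\pi_k$ lies in the corresponding grid; unwinding this in $\PG(3,q)$ shows that no three of the $P_i$ are collinear. A set of $q^2+1$ points of $\PG(3,q)$ (note $q$ is odd, so $q\ne 2$) with no three collinear is an ovoid by Theorem \ref{thm3.2.1}, and the relations $P_j\notin\sigma_i$ for all $j\ne i$ say that $\sigma_i$ meets this ovoid only in $P_i$, i.e.\ that $\sigma_i$ is its tangent plane at $P_i$. This paragraph is the main obstacle: extracting ``no three $P_i$ collinear'' and ``$\sigma_i$ tangent'' cleanly from the grid geometry on $\cH(5,q)$, and separately ruling out (or handling) the pairs, if any, for which $\langle\pi_i,\pi_j\rangle$ is degenerate with respect to the quadratic form.

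Finally, since $q$ is odd the ovoid $\{P_i\}$ is an elliptic quadric $\cE$ by Barlotti--Panella (Theorem \ref{thm3.2.4}), and $\sigma_i=T_{P_i}\cE$, so $O$ is projectively equivalent to the family of $q^2+1$ tangent pencils of $\cE$, viewed on the Klein quadric. To recognise this as the pseudo-conic I would extend scalars to $\bF_{q^2}$: there $\cE$ becomes a hyperbolic quadric whose two reguli are interchanged by the Frobenius, their Klein images are two conics lying in conjugate (hence skew) planes $\xi_1,\xi_2$ of $\PG(5,q^2)$, and for $P\in\cE$ the tangent pencil $\pi_P$ is the line joining the two mutually conjugate points of these conics that correspond to the two generators of $\cE$ through $P$. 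This is precisely the regular (elementary) construction of a pseudo-oval from a conic of $\PG(2,q^2)$, so $O$ is a pseudo-conic. The verification that these two descriptions coincide is a routine, if slightly lengthy, coordinate computation.
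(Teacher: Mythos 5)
The survey does not reproduce a proof of this theorem (it only cites Shult and Thas), but your strategy --- pass through the Klein correspondence, show the centres $P_i$ form an ovoid with tangent planes $\sigma_i$, invoke Barlotti--Panella, and identify the tangent pencils of an elliptic quadric with the regular construction over $\bF_{q^2}$ --- is exactly the natural (and, as far as I can tell, the intended) route. Your first and last paragraphs are sound.

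The genuine gap is the one you flag yourself, and it is not cosmetic: nothing you write excludes a ``degenerate'' pair, i.e.\ a pair $i\neq j$ with, say, $P_j\in\sigma_i$ (skewness of $\pi_i,\pi_j$ only forbids $P_j\in\sigma_i$ and $P_i\in\sigma_j$ simultaneously). For such a pair $\langle\pi_i,\pi_j\rangle\cap\cH(5,q)$ is not a grid but the pair of planes $\mathrm{star}(P_j)\cup\mathrm{ruled}(\sigma_i)$, and the spanning axiom applied to a third index $k$ only yields $P_k\notin\sigma_i$ and $P_j\notin\sigma_k$ --- no contradiction. Note also that deducing ``$P_j\notin\sigma_i$ and $P_i\notin\sigma_j$'' \emph{from} the grid is circular, since the solid is a grid precisely when those conditions hold; so as written both ``no three $P_i$ collinear'' and ``$\sigma_i$ is tangent'' are established only under an unproved hypothesis. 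The gap can be closed as follows. First prove no three $P_i,P_j,P_k$ are collinear in \emph{all} cases: if some pair among them is non-degenerate, the third centre lies on the line $P_aP_b$ of that grid and the line joining it to $\sigma_c\cap(\sigma_a\cap\sigma_b)$ is a pencil line hitting the grid, a contradiction; if all three pairs are degenerate, the incidences $P_b\in\sigma_a$ define a tournament on $\{i,j,k\}$ with all in- and out-degrees at most $1$ (by the constraints above), hence a directed $3$-cycle, which forces the common line $P_iP_jP_k$ to lie in two of the pencils, contradicting skewness. This gives the ovoid unconditionally. Then kill degenerate pairs a posteriori: every plane meets an ovoid of $\PG(3,q)$, $q>2$, in $1$ or $q+1$ points, whereas a degenerate pair would force $\sigma_i$ to meet $\{P_0,\dots,P_{q^2}\}$ in exactly two points ($P_i$ and its unique degenerate partner). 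Hence all pairs are non-degenerate, $\sigma_i$ is the tangent plane at $P_i$, and the rest of your argument goes through.
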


Let $O$ be a pseudo-oval contained in a non-singular elliptic quadric
$\cE(5,q)$ of $\PG(5,q)$ with $q$ odd. It can be shown that $O$ is equivalent
to a set of $q^2 + 1$ points on the non-singular Hermitian variety
$\cU(3,q^2)$ of $\PG(3,q^2)$, with the property that no three of them are in a
common tangent plane of the variety; see, for example, Payne and Thas \cite{payS84,payS09}
 or Shult \cite{shuE05}.

Any pseudo-conic $O$ of $\PG(5,q),\ q$ odd, is the intersection of a
non-singular hyperbolic quadric $\cH(5,q)$ and a non-singular elliptic quadric
$\cE(5,q)$.

Bamberg, Monzillo and Siciliano \cite{bamJ21} showed that a pseudo-oval on
$\cE(5,q),\ q$ odd, is a subset of a five-class association scheme, defined on
certain line sets of $\cE(5,q)$. Pseudo-ovals and pseudo-conics are analysed
in terms of these association schemes.

\begin{rem}
For $q$ even, a pseudo-oval $O$ of $\PG(5,q)$ is never contained in a
non-singular quadric, since all tangent spaces of $O$ contain a common line;
see Shult and Thas \cite{shuE94} and Thas \cite{thaJ11}.
\end{rem}

\noindent
{\bf Open problem 6.} Is each pseudo-oval on $\cE(5,q),\ q$ odd, a pseudo-conic?

\section{Generalised ovoids}
\label{sec8}
\subsection{Introduction}

Ovoids can be generalised by replacing their points with $(n-1)$-dimensional
spaces, $n\geq 1$, to obtain \emph{generalised ovoids}. This generalisation
was first considered in 1971 by Thas \cite{thaJ71a}. However, this
generalisation was too restricted and, in 1974 \cite{thaJ74a}, it was shown
that these generalised ovoids are always of a very particular kind. The
appropriate definition of a generalised ovoid appeared in \emph{Finite
Generalized Quadrangles} by Payne and Thas \cite{payS84, payS09}.

\subsection{Pseudo-ovoids}

In $\gO =\PG(4n-1,q)$, let $O$ be a set of $(n-1)$-dimensional subspaces
$\pi_i,\ i= 0,1,\ldots,q^{2n}$, such that
\begin{enumerate}[(a)]
\item every three generate  a $\Pi_{3n-1}$;

\item for every $i \in \{0,1,\ldots,q^{2n} \}$, there is a
$(3n-1)$-dimensional subspace $\gt_i$ that contains $\pi_i$ and is disjoint
from $\pi_j$ for $j\neq i$.
\end{enumerate}
The space $\gt_i$ is the \emph{tangent space} of $O$ at $\pi_i$;
 it is uniquely defined by $O$ and $\pi_i$. 
   
\begin{defn}
The set $O$ is a \emph{generalised ovoid} or a \emph{pseudo-ovoid} or an \emph{egg} or an \emph{$[n-1]$-ovoid} of   
$\PG(4n - 1,q)$.
\end{defn}

\begin{example}
\begin{enumerate}[(1)]
\item When $n=1$, the ovoids  of $\PG(3,q)$ arise; the tangent spaces are planes.

\item When $n=2$, a pseudo-ovoid of $\PG(7,q)$ contains $q^4 + 1$ lines; the
tangent spaces are $5$-dimensional.

\end{enumerate}

\end{example}

\begin{theorem}{\rm (Payne and Thas \cite{payS84, payS09})}
\begin{enumerate}[\rm(i)]

\item Each hyperplane of $\PG(4n - 1,q)$ not containing a tangent space of the pseudo-ovoid $O$ contains exactly 
$q^n + 1$ elements of $O$.

\item Each point which is not contained in an element of $O$ is contained in exactly $q^n + 1$ tangent spaces. 

\end{enumerate}

\end{theorem}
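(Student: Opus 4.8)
The plan is to count flags in two ways, mimicking the classical ovoid argument ($n=1$) but working with the spread structure that the tangent spaces induce. Fix a hyperplane $H$ of $\PG(4n-1,q)$ that contains no tangent space $\gt_i$. For each $i$, the intersection $H\cap\pi_i$ is either all of $\pi_i$ (when $\pi_i\subset H$) or an $(n-2)$-dimensional subspace of $\pi_i$. I would first show that if $\pi_i\subset H$ then $H\cap\gt_i$ is a hyperplane of $\gt_i$ containing $\pi_i$; since $\gt_i$ is $(3n-1)$-dimensional and meets every other $\pi_j$ trivially, this forces a clean incidence pattern on the remaining elements. The key local fact, which I would extract from condition (a) together with the definition of $\gt_i$, is that for $\pi_i\not\subset H$ the $q^{2n}$ spaces $\langle \pi_i,\pi_j\rangle\cap H$ (for $j\ne i$), together with $\gt_i\cap H$, partition a complement of $H\cap\pi_i$ inside $H$ — in other words they give an $(n-1)$-spread of a $\Pi_{3n-1}$ sitting inside $H$, exactly as in the pseudo-oval construction earlier in the paper (the spreads $\gD_i$). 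This is the step I expect to be the main obstacle: verifying that these traces are pairwise disjoint and of the right dimension requires carefully using that no three $\pi_i$ lie in a $\Pi_{3n-2}$ and that $\gt_i$ is disjoint from all $\pi_j$, $j\ne i$.

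Once that spread structure is in hand, I would count incident pairs $(\pi_j, H')$ where $H'$ ranges over hyperplanes through a fixed $(n-2)$-subspace and not containing any tangent space, or more directly count, for a fixed $H$, the number $N$ of indices $j$ with $\pi_j\subset H$. Projecting from $H\cap\pi_i$ (for some $\pi_i\subset H$, if one exists) onto a complementary $\Pi_{3n-1}$ inside $H$ turns $O$ into a configuration in $\PG(3n-1,q)$ whose image is a generalised $k$-arc; the elements $\pi_j\subset H$ map to the elements of a generalised arc and the elements $\pi_j\not\subset H$ map to points lying on the tangent spreads. By the bound $k\le q^n+1$ for generalised arcs in $\PG(3n-1,q)$ (Theorem in Section 6.2) together with a divisibility/double-count forcing equality, I would pin down $N=q^n+1$. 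The case distinction whether $H$ contains any $\pi_i$ at all is handled by the same count: if $H$ contained none, the traces $H\cap\pi_i$ would give $q^{2n}+1$ distinct $(n-2)$-subspaces subject to incompatible intersection conditions, contradicting (a).

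For part (ii) I would dualise. The tangent spaces $\gt_i$ are $(3n-1)$-dimensional, hence hyperplanes' complements behave symmetrically; more precisely, a point $P$ not on any $\pi_i$ lies on $\gt_i$ iff the hyperplane structure at $P$ "sees" $\pi_i$ tangentially. I would set up the dual incidence: in the dual space $\PG(4n-1,q)^*$, the set $\{\gt_i\}$ together with condition (b) satisfies the hyperplane-version of (a) — namely any three $\gt_i$ meet in a subspace of dimension exactly $n-2$ less than expected — so the dual configuration is again an egg (this is essentially the content of the translation-dual construction sketched earlier for pseudo-ovals, now for pseudo-ovoids). Applying part (i) to the dual egg and a suitable dual hyperplane (which corresponds to the point $P$) then yields that $P$ lies on exactly $q^n+1$ of the $\gt_i$. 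The main work in this half is checking that $\{\gt_i\}$ genuinely satisfies the egg axioms in the dual, i.e. that the tangent spaces are in "general position" in the required sense; I would derive this from part (i) itself by a counting argument (each $\gt_i$ meets a generic hyperplane in a $\Pi_{3n-2}$, and the $q^{2n}$ traces together with the spread data force the dual axioms).

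I expect the genuinely delicate point throughout to be dimension bookkeeping: ensuring every "expected" intersection dimension is actually attained, which is where properties (a) and (b) and the disjointness of $\gt_i$ from the other $\pi_j$ must be combined. Everything else is a double count calibrated against the generalised-arc bound $q^n+1$.
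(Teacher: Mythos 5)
The survey states this theorem without proof (it is quoted from Payne and Thas), so your argument has to stand on its own, and as written it has genuine gaps. The correct structural ingredient is the partition obtained in the quotient modulo an element: for each $i$, the $q^{2n}$ spaces $\langle\pi_i,\pi_j\rangle/\pi_i$ (each of dimension $n-1$) together with $\gt_i/\pi_i$ (of dimension $2n-1$) partition the point set of $\PG(4n-1,q)/\pi_i\cong\PG(3n-1,q)$; this is a mixed-dimension partition, not an $(n-1)$-spread. Your ``key local fact'' is dimensionally incoherent: for $\pi_i\not\subset H$ the traces $\langle\pi_i,\pi_j\rangle\cap H$ have dimension $2n-2$ or $2n-1$, not $n-1$, and there are $q^{2n}+1$ of them, whereas an $(n-1)$-spread of a $\Pi_{3n-1}$ has $q^{2n}+q^n+1$ members. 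The projection step also fails: if $\pi_i\subset H$, a complement of $\pi_i$ inside $H$ is only $(3n-2)$-dimensional, and the elements $\pi_j\subset H$ project into the hyperplane $H/\pi_i$ of the quotient, so any three of their images lie in a common hyperplane and cannot form a generalised arc of $\PG(3n-1,q)$ (moreover the egg axioms do not force four elements to span $\PG(4n-1,q)$, so three images need not span the quotient even in general position). Hence the bound $k\le q^n+1$ for generalised arcs is inapplicable --- and unnecessary: intersecting the partition with the hyperplane $H/\pi_i$ and counting points gives $\frac{q^{3n-1}-1}{q-1}=\frac{q^{2n-1}-1}{q-1}+s\,\frac{q^{n}-1}{q-1}+(q^{2n}-s)\frac{q^{n-1}-1}{q-1}$, whence $s=q^{n}$ and $H$ contains exactly $q^n+1$ elements. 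Finally, the degenerate case in which $H$ contains no element of $O$ is dismissed with ``incompatible intersection conditions,'' which is not an argument; it genuinely has to be excluded.

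For (ii), deducing the statement by applying (i) to the dual configuration is close to circular: that the tangent spaces form an egg in the dual space (the translation dual, stated later in the survey) is itself a theorem whose standard proof relies on (ii) or an equivalent count, and you give no independent verification of the dual axioms. A direct route uses the same partition: a point $P$ outside $\wtl O$ lies either in $\gt_i$ or in exactly one $\langle\pi_i,\pi_j\rangle$, which yields that the average number of tangent spaces through such a point is $q^n+1$; a second count over pairs, using that $\gt_i\cap\gt_j$ has dimension $2n-1$ and avoids $\wtl O$, shows the variance is zero and hence the number is constant. None of this appears in your sketch, so part (ii) remains essentially unproved.
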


\begin{cor}
\begin{enumerate}[\rm(i)]

\item Let $\wtl{O}$ be the union of all elements of a pseudo-ovoid $O$ in the space $\PG(4n - 1,q)$ and let $\Pi$ be any hyperplane. Then
$|\wtl{O}\cap \Pi| \in \{\g_1,\g_2\},$ with
\[
\g_1  = \frac{(q^n - 1)(q^{2n-1} + 1)}{q-1},\quad \g_1 - \g_2 = q^{2n-1}.
\]

\item {\rm (Delsarte \cite{delP72})} 
Hence $\wtl{O}$ defines a  projective $2$-weight linear code and a strongly regular graph. 
\end{enumerate}
\end{cor}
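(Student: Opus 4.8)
The plan is to prove (i) by a direct incidence count, splitting the hyperplanes of $\PG(4n-1,q)$ according to whether or not they contain a tangent space of $O$, and then to obtain (ii) from the classical correspondence of Delsarte. I would begin with two elementary observations. First, the elements of $O$ are pairwise disjoint: for $i\neq j$ we have $\pi_i\cap\pi_j\subseteq\gt_i\cap\pi_j=\emptyset$ by property (b), so $|\wtl{O}|=(q^{2n}+1)\cdot\frac{q^n-1}{q-1}$. Second, since each $\pi_i$ has projective dimension $n-1$, a hyperplane $\Pi$ meets $\pi_i$ either in the whole of $\pi_i$ or, by the Grassmann dimension formula (which forces $\dim(\pi_i\cap\Pi)\ge (n-1)+(4n-2)-(4n-1)=n-2$), in a subspace of dimension exactly $n-2$; the two possibilities contribute $\frac{q^n-1}{q-1}$ and $\frac{q^{n-1}-1}{q-1}$ points respectively.

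Now I would treat the two cases. If $\Pi$ contains no tangent space of $O$, then the Payne--Thas theorem quoted above says exactly $q^n+1$ elements of $O$ lie in $\Pi$, so the other $q^{2n}-q^n$ elements each meet $\Pi$ in an $(n-2)$-space, and summing gives
\[
|\wtl{O}\cap\Pi|=(q^n+1)\frac{q^n-1}{q-1}+(q^{2n}-q^n)\frac{q^{n-1}-1}{q-1}=\frac{(q^n-1)(q^{2n-1}+1)}{q-1}=\g_1 .
\]
If instead $\Pi$ contains a tangent space $\gt_i$, I claim that $\pi_i$ is the only element of $O$ lying in $\Pi$: certainly $\pi_i\subseteq\gt_i\subseteq\Pi$, while for $j\neq i$ the space $\pi_j$ is disjoint from $\gt_i$, so $\langle\pi_j,\gt_i\rangle$ has dimension $(n-1)+(3n-1)+1=4n-1$ and hence equals all of $\PG(4n-1,q)$; if $\pi_j\subseteq\Pi$ this would force $\Pi=\PG(4n-1,q)$, a contradiction, so $\pi_j\not\subseteq\Pi$ and $\dim(\pi_j\cap\Pi)=n-2$. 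Therefore
\[
|\wtl{O}\cap\Pi|=\frac{q^n-1}{q-1}+q^{2n}\cdot\frac{q^{n-1}-1}{q-1}=\g_1-q^{2n-1}=\g_2 ,
\]
and in particular a hyperplane contains at most one tangent space. As every hyperplane of $\PG(4n-1,q)$ falls into exactly one of the two cases, statement (i) follows.

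For (ii), part (i) exhibits $\wtl{O}$ as a set of points of $\PG(4n-1,q)$ whose intersection with any hyperplane takes one of the two distinct values $\g_1$ and $\g_2$; by Delsarte \cite{delP72}, the linear code whose generator matrix has the points of $\wtl{O}$ as columns is then a projective two-weight code, with the two nonzero weights $q^{4n-1}-\g_1$ and $q^{4n-1}-\g_2$, and the associated Cayley graph on the additive group is strongly regular. The one genuinely non-routine step is Case B: pinning down exactly which elements of $O$ survive intersection with a hyperplane through a tangent space. Everything else is bookkeeping, and this step is settled by the disjointness of $\pi_j$ from $\gt_i$ together with the dimension count $(n-1)+(3n-1)+1=4n-1$.
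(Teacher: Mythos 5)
Your argument is correct and is the standard derivation the paper has in mind (the corollary is stated without proof, but it is exactly this two-case count: Payne--Thas for hyperplanes containing no tangent space, and the observation that a hyperplane through $\gt_i$ contains $\pi_i$ and no other element of $O$, which you justify properly via $\langle\pi_j,\gt_i\rangle=\PG(4n-1,q)$). One small slip in your aside for (ii): the nonzero weights of the projective code are $|\wtl{O}|-\g_1$ and $|\wtl{O}|-\g_2$ with $|\wtl{O}|=(q^{2n}+1)(q^n-1)/(q-1)$, not $q^{4n-1}-\g_1$ and $q^{4n-1}-\g_2$; this does not affect the corollary, which only needs the two-intersection property you established.
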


\subsection{Regular pseudo-ovoids}
In the extension $\PG(4n - 1,q^n)$ of $\PG(4n - 1,q)$, consider $n$ solids
$\xi_i,\ i= 1,2,\ldots,n,$ that are conjugate in the extension $\Fqn$ of $\Fq$
and which span $\PG(4n - 1,q)$. This means that they form an orbit of the
Galois group corresponding to this extension and span $\PG(4n - 1,q^n)$.

In the space $\xi_1$, take an ovoid $ O_1 = \{P_0^{(1)},
P_1^{(1)},\ldots,P_{q^{2n}}^{(1)} \}$. Next, let $P_i^{(1)},
P_i^{(2)},\ldots,P_i^{(n)},$ $i = 0,1,\ldots,q^{2n}$, be conjugate in $\Fqn$
over \F. The points $P_i^{(1)}, P_i^{(2)},\ldots,P_i^{(n)}$ now define an
$(n-1)-$dimensional subspace $\pi_i$ over \F\, for each $i = 0,1,\ldots,q^{2n}$.
It follows that the ovoid $O = \{\pi_0,\pi_1,\ldots,\pi_{q^{2n}} \}$ is a pseudo-ovoid of $\PG(4n
-1,q)$.

These are the \emph{regular} or \emph{elementary} pseudo-ovoids. If $ O_1$ is
an elliptic quadric over $\Fqn$, the corresponding pseudo-ovoid is \emph{classical}
or a \emph{pseudo-quadric}.
  
Alternatively, let $V$ be the $4$-dimensional vector space underlying the 
projective space $\PG(3,q^n)$. Considering $V$ as an \F-vector space, each
point of   $\PG(3,q^n)$ becomes an $(n-1)$-dimensional subspace of
 $\PG(4n -1,q)$. If $O_1$ is an ovoid of $\PG(3,q^n)$, then  $O_1$ becomes
 a regular pseudo-ovoid of $\PG(4n -1,q)$. 

\begin{rem}
For $q$ even, every known pseudo-ovoid is regular. For $q$ odd, there are
pseudo-ovoids that are not regular. By the theorem of Barlotti and Panella,
for $q$ odd, every regular pseudo-ovoid is a pseudo-quadric.
\end{rem}

\noindent
{\bf Open problem 7.} Is every pseudo-ovoid of $\PG(4n-1,q),\ q$ even, regular?

\subsection{Translation duals}

\begin{theorem}{\rm (Payne and Thas \cite{payS84, payS09})}
The tangent spaces of a pseudo-ovoid $O$ of $\PG(4n-1,q)$ form a pseudo-ovoid $O^*$ in the dual space of $\PG(4n-1,q)$. 
\end{theorem}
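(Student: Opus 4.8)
The plan is to show that the set $O^* = \{\gt_0,\gt_1,\ldots,\gt_{q^{2n}}\}$ of tangent spaces, viewed as points (i.e. hyperplanes) of the dual space $\gO^\vee = \PG(4n-1,q)$, satisfies the two defining properties of a pseudo-ovoid: every three of the $\gt_i$ meet in a common $\Pi_{n-1}$ (the dual of ``every three $\pi_i$ generate a $\Pi_{3n-1}$''), and for each $i$ there is a $\Pi_{3n-1}$ in the dual that contains the dual object of $\gt_i$ and avoids the dual objects of $\gt_j$, $j\ne i$. Passing to the dual, a $(3n-1)$-space $\gt_i$ of $\gO$ is a point $\gt_i^\vee$ of $\gO^\vee$, an $(n-1)$-space $\pi_i$ is a $(3n-1)$-space $\pi_i^\vee$ of $\gO^\vee$, and ``$\gt_i \supseteq \pi_i$'' becomes ``$\gt_i^\vee \in \pi_i^\vee$''. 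So I must verify that the $q^{2n}+1$ points $\gt_i^\vee$ form a pseudo-ovoid in $\gO^\vee$ with tangent spaces $\pi_i^\vee$.

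First I would establish the analogue of property (a): that any three distinct tangent spaces $\gt_i,\gt_j,\gt_k$ intersect in exactly an $(n-1)$-dimensional subspace of $\gO$. For the lower bound on the intersection, dualise ``$\langle\pi_i,\pi_j,\pi_k\rangle = \Pi_{3n-1}$'' (property (a) of $O$), which says $\pi_i^\vee\cap\pi_j^\vee\cap\pi_k^\vee$ is a point; this is the required statement for $O^*$. But to even state that $O^*$ is a pseudo-ovoid I need the dimensions to work out, so the real content is a counting/dimension argument inside $\gO$ itself, showing $\dim(\gt_i\cap\gt_j\cap\gt_k)=n-1$. Here I would use the results already proved about pseudo-ovoids: the key input is the theorem (Payne--Thas, quoted above) that each hyperplane not containing a tangent space of $O$ meets $O$ in exactly $q^n+1$ elements, and dually each point outside all $\pi_i$ lies in exactly $q^n+1$ tangent spaces. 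From $\gt_i\cap\gt_j$ one knows $\dim(\gt_i\cap\gt_j)=2n-1$ (since $\gt_j$ is disjoint from $\pi_i\subseteq\gt_i$, forcing $\langle\gt_i,\gt_j\rangle$ to be as large as the ambient space allows, i.e. $\dim\langle\gt_i,\gt_j\rangle = 4n-1$, hence $\dim(\gt_i\cap\gt_j) = (3n-1)+(3n-1)-(4n-1)=2n-1$); intersecting with a third tangent space and again using disjointness from the corresponding $\pi_k$ pins the dimension down to $n-1$ by the same modular-law computation.

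Next I would verify property (b) for $O^*$: for each $i$ the subspace $\pi_i$, regarded in $\gO^\vee$ as the $(3n-1)$-space $\pi_i^\vee$, contains $\gt_i^\vee$ and is disjoint from $\gt_j^\vee$ for $j\ne i$. Translating back, this amounts to the statements $\gt_i\supseteq\pi_i$ (immediate, it is part of the definition of the tangent space of $O$) and, for the disjointness, that $\gt_j\not\supseteq\pi_i$ for $j\ne i$, equivalently $\pi_i\not\subseteq\gt_j$. This is precisely the defining disjointness of $\gt_j$ from $\pi_i$, since $\gt_j$ is disjoint from $\pi_i$ (part (b) of the definition of $O$), and a space disjoint from $\pi_i$ certainly cannot contain it. Finally, one must check that the tangent space of $O^*$ at the point $\gt_i^\vee$ is exactly $\pi_i^\vee$ and not some other $(3n-1)$-space --- i.e. that it is \emph{uniquely} determined; this follows from the uniqueness clause in the definition of tangent spaces applied in $\gO^\vee$, together with the dimension count already done, since $\pi_i^\vee$ is a $(3n-1)$-space through $\gt_i^\vee$ meeting no other $\gt_j^\vee$, and such a space is unique.

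The main obstacle I anticipate is the bookkeeping of dimensions in the triple-intersection step: making sure that the genericity built into conditions (a) and (b) of $O$ really does force $\dim(\gt_i\cap\gt_j\cap\gt_k)=n-1$ rather than something smaller, so that the dual objects have the right size to constitute a pseudo-ovoid. I would handle this by repeated application of the Grassmann (modular) identity $\dim\langle A,B\rangle = \dim A + \dim B - \dim(A\cap B)$, each time using a disjointness hypothesis ($\gt_j\cap\pi_i=\emptyset$ and the fact that $\pi_i\subseteq\gt_i$) to force the join to fill the ambient space; the only care needed is to check that no ``accidental'' collapse occurs, which is exactly what the pseudo-ovoid axioms prevent. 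Everything else is a formal transfer through the duality $\gO\leftrightarrow\gO^\vee$.
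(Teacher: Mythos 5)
Your overall strategy is the right one: condition (b) for $O^*$ is indeed immediate (take $\pi_i^\vee$ as the tangent space at $\gt_i^\vee$, using $\pi_i\subseteq\gt_i$ and $\pi_i\cap\gt_j=\emptyset$ for $j\neq i$), and the entire content of the theorem is condition (a), namely $\dim(\gt_i\cap\gt_j\cap\gt_k)=n-1$ for distinct $i,j,k$. Your computation of $\dim(\gt_i\cap\gt_j)=2n-1$ is correct. But the argument you offer for the triple intersection --- ``the same modular-law computation'', using ``disjointness from the corresponding $\pi_k$'' --- does not work, and this is exactly the step you yourself flag as the main obstacle. The Grassmann identity gives only the lower bound $\dim(\gt_i\cap\gt_j\cap\gt_k)\geq(2n-1)+(3n-1)-(4n-1)=n-1$; for the matching upper bound you would need $\langle\gt_i\cap\gt_j,\gt_k\rangle$ to be the whole space, and no disjointness hypothesis in the definition delivers this. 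Indeed $\gt_i\cap\gt_j$ contains neither $\pi_i$ nor $\pi_j$ (it is in fact disjoint from \emph{every} element of $O$), so knowing that $\gt_k$ avoids $\pi_i$ and $\pi_j$ says nothing about how $\gt_k$ meets $\gt_i\cap\gt_j$. Already for $n=1$ the assertion that no three tangent planes of an ovoid of $\PG(3,q)$ share a line is a counting fact, not a linear-algebra fact.

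The missing ingredient is the counting theorem you cite as ``the key input'' but never actually deploy. Put $\eta=\gt_i\cap\gt_j$. Since $\gt_i$ is disjoint from every $\pi_k$ with $k\neq i$, and $\gt_j$ is disjoint from $\pi_i$, the space $\eta$ is disjoint from all elements of $O$; hence each of its $(q^{2n}-1)/(q-1)$ points lies on exactly $q^n+1$ tangent spaces, so on exactly $q^n-1$ tangent spaces $\gt_k$ with $k\neq i,j$. Double counting these incidences gives $\sum_{k\neq i,j}|\gt_k\cap\eta|=(q^{2n}-1)(q^n-1)/(q-1)$, and since each of the $q^{2n}-1$ summands is at least $(q^n-1)/(q-1)$ by the Grassmann lower bound, each equals $(q^n-1)/(q-1)$; thus every $\gt_k\cap\eta$ is exactly an $(n-1)$-space. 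With this inserted your proof closes. Two smaller slips in your duality dictionary should also be fixed: in $\PG(4n-1,q)$ the dual of the $(3n-1)$-space $\gt_i$ is an $(n-1)$-space, not a point, and $\pi_i^\vee\cap\pi_j^\vee\cap\pi_k^\vee$ is an $(n-1)$-space; these are cosmetic. (The survey states the theorem without proof, so there is no in-paper argument to compare against, but the standard proof in Payne--Thas is essentially the counting argument just described.)
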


\begin{defn}
The pseudo-ovoid $O^*$ is the \emph{translation dual} of the pseudo-ovoid $O$.
\end{defn}

\noindent
{\bf Open problem 8.} For $q$ even, is every pseudo-ovoid $O$ of $\PG(4n-1,q)$ isomorphic to its translation dual?

\begin{rem}
For $q$ odd, there are pseudo-ovoids which are not isomorphic to their translation dual.   
\end{rem}

\subsection{Characterisations}

\begin{theorem}{\rm (Payne and Thas \cite{payS84, payS09})}
The pseudo-ovoid $O$ of $\PG(4n-1,q)$ is regular if and only if one of the following holds$:$
\begin{enumerate}[\rm(i)]
 \item for any point $P$ not contained in an element of $O,$ the $q^n + 1$ tangent spaces containing $P$ have exactly 
$(q^n -1)/(q-1)$ points in common$;$

\item each $\Pi_{3n-1}$ that contains at least three elements of $O$ contains exactly $q^n + 1$ elements of $O$.
\end{enumerate}
\end{theorem}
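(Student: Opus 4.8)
The plan is to prove the implications: regular $\Rightarrow$ (ii); regular $\Rightarrow$ (i); (ii) $\Rightarrow$ regular; and (i) $\Rightarrow$ regular. Together these give the asserted equivalence. Recall that $O$ is \emph{regular} precisely when $\PG(4n-1,q)$ carries a Desarguesian $(n-1)$-spread $\cS$ with $O\subseteq\cS$; identifying $\cS$ with the point set of $\PG(3,q^n)$ by field reduction, $O$ becomes an ovoid $\cO$ of $\PG(3,q^n)$, each tangent space $\gt_i$ becomes the field reduction of the tangent plane of $\cO$ at the point corresponding to $\pi_i$, every $(3n-1)$-subspace spanned by members of $\cS$ becomes a plane of $\PG(3,q^n)$, and a point of $\PG(4n-1,q)$ in no element of $O$ has $\Fqn$-span a point $P^{+}\notin\cO$ of $\PG(3,q^n)$.

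First, regular $\Rightarrow$ (ii): a $\Pi_{3n-1}$ meeting $O$ in three elements is spanned by three members of $\cS$, hence is itself an $\cS$-subspace, because the $\Fq$-span of $\Fqn$-subspaces is $\Fqn$-closed; so it is a plane of $\PG(3,q^n)$, which meets $\cO$ in an oval of $q^n+1$ points. Next, regular $\Rightarrow$ (i): the tangent spaces of $O$ through a point $P$ as in (i) correspond exactly to the $q^n+1$ tangent planes of $\cO$ through $P^{+}$, and their common intersection is the field reduction of the intersection of those planes; so (i) reduces to the claim that the $q^n+1$ tangent planes of an ovoid of $\PG(3,q^n)$ through an external point $P^{+}$ meet only in $P^{+}$. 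Were they to share a line $m$, then $m$ would be disjoint from $\cO$ (a point of $\cO$ lies on just one tangent plane), and, since there are exactly $q^n+1$ planes through $m$, these $q^n+1$ tangent planes would be all of them; as distinct planes through $m$ meet only in $m$, the ovoid would be the disjoint union of their $q^n+1$ points of tangency, forcing $q^{2n}+1=q^n+1$, which is absurd.

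The substantial direction is (ii) $\Rightarrow$ regular. By (ii), three elements of $O$ lie in a unique $\Pi_{3n-1}$, which meets $O$ in exactly $q^n+1$ elements; these ``secant'' $\Pi_{3n-1}$'s make the $q^{2n}+1$ elements of $O$ into a $3$-design with blocks of size $q^n+1$, that is, an inversive plane of order $q^n$. To promote this to a Desarguesian spread, fix $\pi_\infty\in O$ and a complementary $\Pi_{3n-1}=\Pi'$ and project $O\bsl\{\pi_\infty\}$ from $\pi_\infty$ onto $\Pi'$: the images $\bar\pi_i$ are $q^{2n}$ mutually skew $(n-1)$-subspaces of $\Pi'$, each skew to the $(2n-1)$-subspace $\gt_\infty\cap\Pi'$, and together with a suitable $(n-1)$-spread of $\gt_\infty\cap\Pi'$ they form an $(n-1)$-spread $\cS_\infty$ of $\Pi'$. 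One then uses (ii) to force $\cS_\infty$ to be regular (Desarguesian), and hence, letting $\pi_\infty$ range over $O$, to force the spread of $\PG(4n-1,q)$ reassembled from the various $\cS_\infty$ to be regular as well. Once $\PG(4n-1,q)$ carries this $\PG(3,q^n)$-structure with $O\subseteq\cS$, the egg $O$ is a $(q^{2n}+1)$-cap, hence an ovoid, of $\PG(3,q^n)$, so $O$ is regular.

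Finally, (i) $\Rightarrow$ regular. The tangent spaces of $O$ form the translation dual $O^{*}$, an egg in the dual space, and dualising via the Payne--Thas incidence theorem turns condition (i) for $O$ into the statement that in each hyperplane of the dual space containing no tangent space of $O^{*}$ the $q^n+1$ elements of $O^{*}$ it contains span only a $\Pi_{3n-1}$; for an egg this is equivalent to condition (ii) for $O^{*}$. The previous paragraph then yields that $O^{*}$ is regular, say $O^{*}$ is the field reduction of an ovoid $\cO'$ of $\PG(3,q^n)$; since the tangent planes of any ovoid of $\PG(3,q^n)$ again form an ovoid and since $O^{**}=O$ for every egg, $O=O^{**}$ is the field reduction of the dual ovoid of $\cO'$, and so is regular. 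The main obstacle is the verification, in (ii) $\Rightarrow$ regular and in its dual, that the spread obtained by projection is \emph{Desarguesian} rather than an arbitrary spread: this is the one point where hypothesis (ii), equivalently (i), is needed in full, since only a Desarguesian spread supplies the $\PG(3,q^n)$-coordinatisation realising $O$ as an ovoid.
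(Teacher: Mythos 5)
The paper itself contains no proof of this theorem (it is a survey; the result is cited from Payne and Thas), so your attempt can only be judged on its own terms. The two forward implications are correct and essentially complete: field reduction carries secant planes of the ovoid $\cO$ of $\PG(3,q^n)$ to the secant $\Pi_{3n-1}$'s, and your argument that the $q^n+1$ tangent planes of $\cO$ through an external point meet only in that point is fine. Reducing (i) to (ii) via the translation dual is also a legitimate strategy, though you should verify the step your dualisation silently needs, namely that every $\Pi_{3n-1}$ of the dual space containing three elements of $O^{*}$ lies in at least one hyperplane containing no tangent space of $O^{*}$.

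The genuine gap is in (ii) $\Rightarrow$ regular, which is the entire content of the theorem, and you have not proved it. After projecting $O\bsl\{\pi_\infty\}$ from $\pi_\infty$ onto a complement $\Pi'$ you correctly obtain $q^{2n}$ mutually skew $(n-1)$-spaces covering $\Pi'\bsl(\gt_\infty\cap\Pi')$, but then you (a) complete them to a spread $\cS_\infty$ by an unspecified, hence arbitrary, $(n-1)$-spread of the $(2n-1)$-space $\gt_\infty\cap\Pi'$ --- an arbitrary completion will not be Desarguesian, so you must say which one you take (for instance the traces on $\gt_\infty$ of the other tangent spaces, or of the secant spaces through $\pi_\infty$); (b) assert that ``one then uses (ii) to force $\cS_\infty$ to be regular'' with no argument at all; and (c) assert that the various $\cS_\infty$ ``reassemble'' into a single Desarguesian $(n-1)$-spread of $\PG(4n-1,q)$ containing $O$, again with no construction and no argument. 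Steps (b) and (c) are exactly where all the work lies, and your own closing sentence concedes as much. Note also that the observation that the secant $\Pi_{3n-1}$'s form a $3$-$(q^{2n}+1,q^n+1,1)$ design cannot help by itself: an abstract inversive plane of order $q^n$ carries no $\PG(3,q^n)$-structure (and need not be egglike), so regularity must be extracted from the linear algebra of the spreads $\gD_i$, $\gD_i^{*}$ associated with $O$ --- precisely the part you have omitted.
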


\begin{theorem}{\rm (Brown and Lavrauw \cite{broM04})}
A pseudo-ovoid $O$ in $\PG(4n-1,q),\ q$ even$,$ contains a pseudo-conic if and only if it is a pseudo-quadric.
\end{theorem}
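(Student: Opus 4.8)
The plan is to prove both directions of the equivalence by reducing to the case $n=1$ via the ambient-extension description of regular pseudo-ovoids, combined with Brown's Theorem~\ref{thm3.2.5} (an ovoid of $\PG(3,q)$, $q$ even, containing a conic is an elliptic quadric). One direction is nearly immediate: if $O$ is a pseudo-quadric, then by definition it arises from an elliptic quadric $O_1$ of $\PG(3,q^n)$ (or equivalently from the $\bF_q$-linear representation of a non-singular quadric over $\Fqn$). Any conic section of $O_1$ in a plane of $\PG(3,q^n)$ then yields, under the field-reduction map $\PG(3,q^n)\to\PG(4n-1,q)$, a set of $q^n+1$ of the subspaces $\pi_i$ whose union is spanned by a $\Pi_{3n-1}$ coming from that plane; this set is precisely a pseudo-conic sitting inside $O$. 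So the content is the forward implication.

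For the forward direction, suppose $O=\{\pi_0,\dots,\pi_{q^{2n}}\}$ contains a pseudo-conic $C$. First I would pin down what ``contains'' forces geometrically: a pseudo-conic is itself a regular pseudo-oval, so $C$ spans a $\Pi_{3n-1}$, say $\Sigma$, and $C=\{\pi_i : i\in I\}$ for an index set $I$ of size $q^n+1$ with every three of its members spanning $\Sigma$. The key structural fact to extract is that $\Sigma$ is a hyperplane section of $O$ of the larger type: by the Payne--Thas theorem on hyperplane sections (Theorem preceding the corollary in \S8.2), a hyperplane either contains a tangent space or meets $O$ in exactly $q^n+1$ elements; since $\Sigma$ contains $q^n+1$ elements of $O$, and these span a $\Pi_{3n-1}$ rather than lying in a tangent space, $\Sigma$ realises the generic intersection number. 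Then I would use the regularity criterion of the penultimate theorem in the excerpt (Payne--Thas: $O$ is regular iff every $\Pi_{3n-1}$ meeting $O$ in at least three elements meets it in exactly $q^n+1$). So the goal reduces to showing that \emph{every} such $(3n-1)$-space meets $O$ in $q^n+1$ elements, knowing that \emph{one} of them does so in the highly structured way that makes $C$ a pseudo-conic.

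To push this through I would pass to the quadratic extension picture. Field reduction lets one regard $O$ as a cap-like configuration; the presence of a pseudo-conic $C$ means that the associated $\bF_{q^n}$-structure on $\Sigma$ contains a genuine conic. I would then argue that the tangent spaces $\tau_i$ for $i\in I$, together with $\Sigma$, force enough incidences that $O$ restricted to any $\Pi_{3n-1}$ through three of its elements behaves like an ovoid section — and here Brown's theorem is the lever: a pseudo-ovoid of $\PG(4n-1,q)$ with $q$ even whose ``plane sections'' include an honest conic must, section by section, be forced into the elliptic-quadric shape, and one bootstraps from one good section to all of them using the transitivity/regularity of the tangent-space configuration established in the earlier Payne--Thas results. \textbf{The main obstacle} I anticipate is precisely this globalisation step: upgrading the local information ``one $(3n-1)$-section is a pseudo-conic'' to ``all relevant sections have the generic intersection number,'' i.e.\ verifying the hypothesis of the Payne--Thas regularity criterion. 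For $n=1$ this is exactly Brown's theorem and nothing more is needed; for general $n$ one must show that the $\bF_q$-linear structure cannot ``twist'' away from the conic in some directions while agreeing with it in the direction of $\Sigma$, and controlling that rigidity — presumably by a counting argument over the pencil of $(3n-1)$-spaces through a fixed $(2n-1)$-subspace of $\Sigma$, or by invoking the classification of ovoid plane sections in the reduced space — is where the real work lies.
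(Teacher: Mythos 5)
This statement is quoted in the survey from Brown and Lavrauw \cite{broM04} without proof, so the only fair comparison is with the content of that paper, and against that standard your proposal has a genuine gap rather than a complete argument. The reverse direction (pseudo-quadric $\Rightarrow$ contains a pseudo-conic) is fine: field reduction of a plane section of the elliptic quadric over $\bF_{q^n}$ does produce a pseudo-conic inside $O$. But the forward direction \emph{is} the theorem, and the step you flag as ``the main obstacle'' --- upgrading the single structured section $\Sigma$ to the hypothesis of the Payne--Thas regularity criterion, i.e.\ showing that \emph{every} $\Pi_{3n-1}$ through three elements of $O$ contains exactly $q^n+1$ of them --- is precisely the hard content of Brown and Lavrauw's paper. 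Gesturing at ``a counting argument over the pencil of $(3n-1)$-spaces'' or ``the classification of ovoid plane sections'' does not supply it; the actual argument goes through the translation generalised quadrangle $T(O)$ of order $(q^n,q^{2n})$, the classical subquadrangle determined by the pseudo-conic, and the theory of (doubly) subtended ovoids, combined with Brown's Theorem \ref{thm3.2.5}. None of that machinery appears in your sketch.

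Two further points. First, your appeal to the hyperplane-section theorem is misapplied: $\Sigma = \langle C\rangle$ is a $\Pi_{3n-1}$, which is a hyperplane of $\PG(4n-1,q)$ only when $n=1$; for $n\ge 2$ it has codimension $n$, so the dichotomy ``contains a tangent space or meets $O$ in $q^n+1$ elements'' does not apply to $\Sigma$ directly (the fact that $\Sigma$ contains exactly $q^n+1$ elements of $O$ needs a separate, and easy, argument: a $(q^n+2)$-nd element of $O$ in $\Sigma$ would violate condition (a) in the definition of a pseudo-ovoid together with the pseudo-oval property of $C$). Second, you are right that regularity alone is not enough for $q$ even --- the underlying ovoid of $\PG(3,q^n)$ could a priori be a Tits ovoid --- and you correctly identify Brown's theorem as the tool that rules this out once a conic section is exhibited; but that final step only becomes available after the regularity/elementarity of $O$ has been established, which is exactly what is missing.
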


\noindent
{\bf Open problem 9.} Is a pseudo-ovoid of $\PG(4n-1,q),\ q$ even$,$ containing a regular pseudo-oval always regular?

\subsection{Good pseudo-ovoids}

\begin{defn}
\begin{enumerate}[\rm(i)]
\item The pseudo-ovoid $O$ in $\PG(4n-1,q)$ is \emph{good} at its element $\pi$ if any $\Pi_{3n-1}$ containing $\pi$ and 
at least two other elements of $O$ contains exactly $q^n + 1$ elements of $O$.

\item In this case, $\pi$ is a \emph{good} element of $O,$ and $O$ is also 
said to be \emph{good}. 
\end{enumerate}

\end{defn}
A regular pseudo-ovoid is good at each of its elements.
\vspace*{2mm}

\begin{rem}
Every known pseudo-ovoid or its translation dual is good.
\end{rem}

\noindent
{\bf Open problem 10.} Is every pseudo-ovoid or its translation dual good? 

\begin{theorem}{\rm (\cite{thaJ94})}
For $q$ even$,$ if the pseudo-ovoid $O$ is good at the element $\pi,$  then the translation dual  $O^*$ is good at 
the tangent space of  $O$ at $\pi$.
\end{theorem}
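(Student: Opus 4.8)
The plan is to exploit the general principle, valid for $q$ even, that the configuration of tangent spaces of a pseudo-ovoid mirrors the configuration of its elements, but with the roles of "contained in an element" and "contains a tangent space" interchanged. Concretely, I would set $\gO = \PG(4n-1,q)$, write $O = \{\pi_0,\pi_1,\ldots,\pi_{q^{2n}}\}$ with tangent spaces $\gt_0,\gt_1,\ldots,\gt_{q^{2n}}$, and pass to the dual space $\gO^*$. By the theorem of Payne and Thas quoted just above, $O^* = \{\gt_0^\perp,\gt_1^\perp,\ldots,\gt_{q^{2n}}^\perp\}$ is again a pseudo-ovoid of $\PG(4n-1,q)$, and moreover the construction is involutory: the tangent space of $O^*$ at $\gt_i^\perp$ is precisely $\pi_i^\perp$. (This duality dictionary is exactly what makes the notion of translation dual symmetric, and I would state it explicitly at the start.) So it suffices to translate the hypothesis "$O$ is good at $\pi=\pi_0$" into a statement about $O^*$ at the element $\gt_0^\perp$ and to check that this is exactly the assertion "$O^*$ is good at $\gt_0^\perp$."

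The core combinatorial step is therefore a counting/incidence translation. Goodness of $O$ at $\pi_0$ says: every $\Pi_{3n-1}$ through $\pi_0$ meeting at least two further elements of $O$ meets exactly $q^n+1$ elements of $O$. Dualising, a $\Pi_{3n-1}$ through $\pi_0$ becomes a $\Pi_{n-1}$ (an $(n-1)$-space) contained in $\pi_0^\perp = \gt_0^\perp$'s tangent space, and the condition "contains the element $\pi_j$" dualises to "is contained in the tangent space $\gt_j^\perp$ of $O^*$". The first part of the Payne--Thas structure theorem (each hyperplane not containing a tangent space of the pseudo-ovoid meets it in exactly $q^n+1$ elements; equivalently, each point not in an element lies on exactly $q^n+1$ tangent spaces) is what pins the count down on the dual side: I would use it to show that an $(n-1)$-subspace of $\gt_0^\perp$ lying in at least three tangent spaces of $O^*$ automatically lies in exactly $q^n+1$ of them, which is the $\Pi_{3n-1}$-version of goodness for $O^*$ stated in the regularity characterisation theorem above. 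The role of $q$ even here is precisely that, as in the remark on the involutory nature of the translation dual, the identification $O^{**}=O$ together with the clean "$0$ or $q^n+1$" dichotomy for even $q$ makes the dual of a $\Pi_{3n-1}$-section through a fixed element again a $\Pi_{3n-1}$-section through the corresponding element; for $q$ odd the tangent-space geometry behaves differently and the argument would break.

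The main obstacle I anticipate is bookkeeping the dimensions and perp-operations carefully enough to be sure that "$\Pi_{3n-1}$ containing $\pi_0$ and at least two other $\pi_j$" dualises \emph{exactly} to "$\Pi_{n-1}$ contained in $\gt_0^\perp$ and in at least two other $\gt_j^\perp$", with no off-by-one in the dimension count and no hidden genericity assumption. In particular one must verify that a $\Pi_{3n-1} \supseteq \pi_0$ which meets $O$ in $\{\pi_0,\pi_j,\pi_k,\ldots\}$ really does have $\perp$ equal to an $(n-1)$-space, i.e. that such sections have the expected dimension $3n-1$ exactly (not smaller), and symmetrically that an $(n-1)$-space contained in several $\gt_j^\perp$ is forced to sit inside $\gt_0^\perp$'s structure in the way required. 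Once this dictionary is nailed down, the proof is essentially a one-line consequence of the involutory translation-dual theorem plus the Payne--Thas hyperplane-section count; I would present it as: apply the duality, observe goodness-at-$\pi_0$ becomes the $\Pi_{3n-1}$-section regularity condition for $O^*$ localised at $\gt_0^\perp$, invoke the structure theorem to upgrade "at least three" to "exactly $q^n+1$", and conclude that $O^*$ is good at $\gt_0^\perp$, which is the tangent space of $O$ at $\pi_0$.
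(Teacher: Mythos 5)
This is a survey, so the paper itself gives no proof of this theorem (it only cites \cite{thaJ94}); your proposal therefore has to stand on its own, and as written it has a genuine gap. The dualisation step does not do what you claim. Write $O=\{\pi_0,\ldots,\pi_{q^{2n}}\}$ with tangent spaces $\gt_0,\ldots,\gt_{q^{2n}}$ and take $\pi=\pi_0$. Goodness of $O$ at $\pi_0$ concerns the spaces $\langle\pi_0,\pi_j,\pi_k\rangle$; under $\perp$ this becomes the statement that every $(n-1)$-space of the form $\pi_0^\perp\cap\pi_j^\perp\cap\pi_k^\perp$ lies in exactly $q^n+1$ \emph{tangent spaces} of $O^*$. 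But goodness of $O^*$ at the element $\gt_0^\perp$ is the statement that every $(3n-1)$-space $\langle\gt_0^\perp,\gt_j^\perp,\gt_k^\perp\rangle$ contains exactly $q^n+1$ \emph{elements} of $O^*$, i.e., undualised, that every $(n-1)$-space $\gt_0\cap\gt_j\cap\gt_k$ lies in exactly $q^n+1$ tangent spaces of $O$. These are two different conditions (a ``dual goodness'' versus goodness), and passing from one to the other is precisely the content of the theorem, not a formal consequence of the involution $O^{**}=O$.

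The decisive symptom is that every tool you invoke --- the translation-dual theorem, the ``$q^n+1$ elements per generic hyperplane'' and ``$q^n+1$ tangent spaces per generic point'' counts of Payne and Thas --- holds for all $q$, yet the remark immediately following the theorem states that the result is \emph{false} for $q$ odd. So an argument built only from those ingredients cannot be correct. The missing even-characteristic input is the nucleus: for $q$ even, the $q^n+1$ tangent spaces of a pseudo-oval all contain a common $(n-1)$-space. One applies this to the pseudo-ovals cut out on $O$ by the good $(3n-1)$-spaces $\langle\pi_0,\pi_j,\pi_k\rangle$ (each such section carries exactly $q^n+1$ elements by hypothesis), and uses the resulting common $(n-1)$-space to show that the corresponding $q^n+1$ tangent spaces of $O$ meet in an $(n-1)$-space lying in no further tangent space appropriately, i.e., that $\gt_0^\perp,\gt_j^\perp,\gt_k^\perp$ span a $(3n-1)$-space containing exactly $q^n+1$ elements of $O^*$. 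Without an argument of this kind --- one that genuinely breaks for $q$ odd --- the proof is not there. (There is also a notational slip: the $(n-1)$-spaces obtained by dualising should lie in $\pi_0^\perp$, the tangent space of $O^*$ at $\gt_0^\perp$, not ``in $\gt_0^\perp$'', which is itself only $(n-1)$-dimensional.)
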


\begin{rem}
For $q$ odd$,$ this theorem is not true.
\end{rem}

\noindent
{\bf Open problem 11.} For $q$ even$,$ is every good pseudo-ovoid $O$ of $\PG(4n-1,q)$ regular? 

\begin{theorem}{\rm (\cite{thaJ06})}
Let $O$ be a pseudo-ovoid of $\PG(4n-1,q),\ q$ even$,$ that is good at $\pi\in O$. If the $q^{2n} + q^n$ 
pseudo-ovals on $O$ containing $\pi$ are regular$,$ then $O$ is regular.
\end{theorem}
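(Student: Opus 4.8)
\medskip
\noindent\textbf{Proof plan.} The plan is to reduce everything to the construction of a single Desarguesian $(n-1)$-spread. Note first that $O$ is regular precisely when there is a Desarguesian $(n-1)$-spread $\mathcal{D}$ of $\gO=\PG(4n-1,q)$ every element of $O$ being a member of $\mathcal{D}$: given such a $\mathcal{D}$, identify $\gO$ with the \F-reduction of $\PG(3,q^n)$, so that the members of $\mathcal{D}$ are exactly the images of the points of $\PG(3,q^n)$; then $O$ becomes a set of $q^{2n}+1=(q^n)^2+1$ points of $\PG(3,q^n)$, and axiom~(a) for the egg says that no three of them are collinear, so $O$ is a $((q^n)^2+1)$-cap of $\PG(3,q^n)$. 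By the bound in Theorem~\ref{thm3.2.1} and the definition following it, such a cap is an ovoid, i.e.\ $O$ is a regular pseudo-ovoid; the converse is clear. So the whole task is to build $\mathcal{D}$.

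First I would extract the combinatorial skeleton forced by goodness at $\pi$. By axiom~(a), two distinct elements of $O\bsl\{\pi\}$ span with $\pi$ a unique $\Pi_{3n-1}$; by goodness this meets $O$ in exactly $q^n+1$ elements, that is, in one of the $q^{2n}+q^n$ pseudo-ovals through $\pi$. Counting incidences two ways (each such pseudo-oval has $q^n$ elements besides $\pi$, each element of $O\bsl\{\pi\}$ lies on $q^n+1$ of them) shows that $O\bsl\{\pi\}$, with the pseudo-ovals through $\pi$ as blocks, is a $2$-$(q^{2n},q^n,1)$ design, hence an affine plane $\mathcal{A}$ of order $q^n$; two pseudo-ovals through $\pi$ are parallel exactly when they meet only in $\pi$, and there are $q^n+1$ parallel classes. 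Now fix a $\Pi_{3n-1}=\Sigma$ skew to $\pi$ and project $O\bsl\{\pi\}$ from $\pi$ into $\Sigma$: each $\pi_j$ goes to an $(n-1)$-space $\ov\pi_j$, the $q^{2n}$ images are mutually skew, and each pseudo-oval $O_\ell$ through $\pi$ is carried into a $(2n-1)$-subspace $\Sigma_\ell$ of $\Sigma$ in which the $\ov\pi_j$ with $\pi_j\in O_\ell$, completed by the trace of the tangent space of $O_\ell$ at $\pi$, make up an $(n-1)$-spread $\mathcal{S}_\ell$ of $\Sigma_\ell$. Because $O_\ell$ is regular, $\mathcal{S}_\ell$ is a regular spread of $\Sigma_\ell\cong\PG(2n-1,q)$ in the sense of Section~\ref{sec7}.

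The heart of the proof — and the step I expect to be the main obstacle — is to glue all these regular sub-spreads into a single Desarguesian $(n-1)$-spread $\mathcal{R}$ of $\Sigma\cong\PG(3n-1,q)$ containing every $\ov\pi_j$, and then to pull $\mathcal{R}$ back through $\pi$ to the spread $\mathcal{D}$ of $\gO$ (so that $\pi$ and every $\pi_i$ become members of $\mathcal{D}$). The point is that knowing \emph{all} $q^{2n}+q^n$ of the spreads $\mathcal{S}_\ell$ are regular, not merely one, pins down a unique $\Fqn$-coordinatisation: the $\mathcal{S}_\ell$ attached to one parallel class of $\mathcal{A}$ form a coherent family of regular $(n-1)$-spreads, and matching coordinatisations across parallel classes — using the affine incidences of $\mathcal{A}$ and the fact that a regular spread is determined by sufficiently many of its members — identifies $\mathcal{A}$ with $\mathrm{AG}(2,q^n)$ and realises the $\mathcal{S}_\ell$ as the \F-reductions of the lines of $\PG(2,q^n)$ through one fixed point. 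The hypothesis that $q$ is even enters here in an essential way: each regular pseudo-oval $O_\ell$ then has a nucleus, an $(n-1)$-space contained in all its tangent spaces and belonging to $\mathcal{S}_\ell$, and these nuclei supply exactly the rigidity that forbids an inconsistent gluing — an ingredient that is missing for $q$ odd, which is why there the corresponding question (Open problem~9) is still open. Once $\mathcal{D}$ has been constructed, the reformulation of the first paragraph completes the proof.
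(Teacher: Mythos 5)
The survey you are reading states this theorem with a pointer to \cite{thaJ06} and contains no proof, so I can only measure your plan against the argument in that reference. Your opening moves are correct and are indeed the standard ones: regularity of $O$ is equivalent to $O$ being contained in a Desarguesian $(n-1)$-spread of $\gO$, whence it field-reduces to a $((q^n)^2+1)$-cap, i.e.\ an ovoid, of $\PG(3,q^n)$; goodness at $\pi$ makes the pseudo-ovals through $\pi$ the blocks of a $2$-$(q^{2n},q^n,1)$ design, hence an affine plane of order $q^n$; and projecting from $\pi$ onto a $\Pi_{3n-1}$ skew to $\pi$ turns each regular pseudo-oval $O_\ell$ into a regular $(n-1)$-spread $\mathcal{S}_\ell$ of a $(2n-1)$-subspace $\Sigma_\ell$. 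The problem is that the proof stops exactly where the theorem begins. The ``gluing'' step is asserted, not performed: each $\mathcal{S}_\ell$ endows the vector space underlying $\Sigma_\ell$ with an $\Fqn$-structure, and you must show that these $q^{2n}+q^n$ structures on overlapping subspaces are restrictions of a single $\Fqn$-structure. Your two supporting claims --- that ``a regular spread is determined by sufficiently many of its members'' and that the nuclei ``supply exactly the rigidity'' --- are not arguments: two regular spreads of two $(2n-1)$-spaces sharing members can induce incompatible $\Fqn$-structures, and ruling this out is precisely the delicate point that forces the hypotheses $h>1$, $n$ prime in Theorem \ref{rvdv} and occupies \cite{rotS15a,rotS15b,thaJ19,metK20}. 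Likewise, identifying the affine plane $\mathcal{A}$ with $\mathrm{AG}(2,q^n)$ is something to be proved, not assumed.

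The second, and more damaging, gap is the ``pull back''. Projection from $\pi$ forgets where $\pi_j$ sits inside $\langle\pi,\pi_j\rangle$: there are $q^{n^2}$ complements of $\pi$ in that $(2n-1)$-space, all with the same image $\ov\pi_j$ in $\Sigma$. Consequently, even a complete proof that the $\ov\pi_j$ extend to a Desarguesian spread of $\Sigma$ would say nothing about whether the actual subspaces $\pi_j$ are coherently positioned with respect to one $\Fqn$-structure on all of $\gO$; a pseudo-ovoid and a badly perturbed non-regular configuration can have identical projections from $\pi$. Recovering this lost information is where the proof in \cite{thaJ06} has to work --- through the tangent spaces, the translation dual (which for $q$ even is again good, by the theorem of \cite{thaJ94} quoted in Section 8.6), and the associated TGQ $T(O)$ --- and it is entirely absent from your outline. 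So your proposal is a correct reduction and set-up, but both pieces of the actual mathematical content --- the coherence of the $\Fqn$-structures across the $q^{2n}+q^n$ pseudo-ovals, and the lift from the quotient modulo $\pi$ back to $\PG(4n-1,q)$ --- are missing.
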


Now, the case that $q$ is odd is considered.

\begin{theorem}{\rm (\cite{thaJ94} )}
Let  the pseudo-ovoid $O$ of $\PG(4n-1,q),\ q$ odd$,$ be good at $\pi$. Then $\pi$ is contained
in exactly $q^{2n} + q^n$ pseudo-conics lying  on $O$.
\end{theorem}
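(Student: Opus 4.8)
The plan is to reduce the count to a combinatorial computation inside a single tangent space, using the ``good'' hypothesis to manufacture as many pseudo-conics as possible. Let $O=\{\pi_0,\pi_1,\ldots,\pi_{q^{2n}}\}$ be the pseudo-ovoid, good at $\pi=\pi_0$, and let $\tau_i$ be the tangent space of $O$ at $\pi_i$. First I would recall that through $\pi$ and any two further elements $\pi_j,\pi_k$ the span is a $\Pi_{3n-1}$, which by goodness meets $O$ in exactly $q^n+1$ elements; these $q^n+1$ elements form a pseudo-oval $O_{jk}$ of that $\Pi_{3n-1}$, and every pseudo-oval on $O$ through $\pi$ arises this way. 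Counting pairs, the number of such pseudo-ovals is $\binom{q^{2n}}{2}/\binom{q^n}{2}=q^{2n}+q^n$, which recovers the count in the preceding theorem. So the real content is to show that each of these $q^{2n}+q^n$ pseudo-ovals through $\pi$ is in fact a pseudo-conic.

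The key tool is the projection/quotient construction in the $q$ odd case: from a pseudo-ovoid good at $\pi$ one obtains, by passing to the quotient space $\PG(4n-1,q)/\pi\cong\PG(3n-1,q)$ (equivalently by intersecting with a $\Pi_{3n-1}$ skew to $\pi$), a pseudo-oval $O'$ of $\PG(3n-1,q)$ whose tangent spaces are the images of the $\tau_i$; this is exactly the type of ambient spread/tangent configuration studied in Section~\ref{sec7}. I would then invoke Segre's theorem in the form available here: for $q$ odd every oval of $\PG(2,q^n)$ is a conic, and — crucially — the quotient pseudo-oval $O'$ carries enough structure (the perspectivity of the triples $\{\pi_i,\pi_j,\pi_k\}$ with their tangents, coming from the fact that $O$ is an honest pseudo-ovoid in one higher dimension) to force, via the theorem of \cite{thaJ11}, that $O'$ is a pseudo-conic. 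Pulling this back, the pseudo-oval $O_{jk}$ spanned by $\pi=\pi_0,\pi_j,\pi_k$ inside its $\Pi_{3n-1}$ must likewise be a pseudo-conic, because it projects isomorphically onto a sub-configuration of the pseudo-conic $O'$ and the property of being a conic is inherited by the relevant substructures. Since this holds for every pair $\{j,k\}$, all $q^{2n}+q^n$ pseudo-ovals on $O$ through $\pi$ are pseudo-conics, which is the claim.

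I would organise the write-up in three steps: (1) the pair-count giving $q^{2n}+q^n$ pseudo-ovals through $\pi$ and the identification of each with a plane-section $\Pi_{3n-1}\cap O$; (2) the construction of the quotient pseudo-oval $O'$ in $\PG(3n-1,q)$ and verification, using the hyperplane-intersection numbers for pseudo-ovoids (Theorem of Payne and Thas in Section~\ref{sec8}) together with $q$ odd, that the tangent triples of $O'$ are in perspective; (3) the application of Segre's theorem / \cite{thaJ11} to conclude $O'$ is a pseudo-conic, and the descent back to each $O_{jk}$.

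The main obstacle, I expect, is step~(2): establishing the perspectivity of the triples $\{\pi_i,\pi_j,\pi_k\}$ and $\{\tau_i,\tau_j,\tau_k\}$ after quotienting by $\pi$. This requires a careful bookkeeping of how the tangent spaces $\tau_i$ of the pseudo-ovoid behave under projection from $\pi$ — in particular that $\tau_i$ projects to a tangent space of $O'$ and that the ``good'' hypothesis is exactly what guarantees the intersection pattern $\tau_i\cap\tau_j$ descends correctly — and it is here that one genuinely uses $q$ odd (so that tangent spaces are well-behaved and the Lemma of Tangents machinery underlying \cite{thaJ11} applies). The rest is routine counting and invocation of already-stated theorems.
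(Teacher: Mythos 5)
Your first step, the count, is fine: goodness at $\pi$ means every $\Pi_{3n-1}$ containing $\pi$ and at least two further elements of $O$ meets $O$ in a pseudo-oval, two distinct such spaces share no pair of elements of $O\setminus\{\pi\}$, and $\binom{q^{2n}}{2}/\binom{q^n}{2}=q^{2n}+q^n$. But that only counts pseudo-ovals through $\pi$; the entire content of the theorem is that each of them is a pseudo-\emph{conic}, and your steps (2)--(3) do not establish this. The central object you rely on, ``the quotient pseudo-oval $O'$ of $\PG(3n-1,q)$'', does not exist: projecting the $q^{2n}$ elements $\pi_j$, $j\neq 0$, from $\pi$ gives $q^{2n}$ subspaces of dimension $n-1$ in $\PG(4n-1,q)/\pi\cong\PG(3n-1,q)$, vastly more than the $q^n+1$ a pseudo-oval can contain; what goodness actually produces in this quotient, for $q$ odd, is the Veronesean structure described in the classification theorem quoted immediately after this one in the survey. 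Moreover $\pi$ is the centre of the projection, so the pseudo-oval $O_{jk}$, which contains $\pi$, cannot ``project isomorphically'' onto anything. Finally, the two results you invoke cannot be applied as stated: Segre's theorem classifies ovals of $\PG(2,q^n)$ and hence only applies to \emph{regular} pseudo-ovals, and whether $O_{jk}$ is regular is precisely what is at stake (for arbitrary pseudo-ovals this is Open Problem 3); and the criterion of \cite{thaJ11} requires the perspectivity of all triples $\{\pi_i,\pi_j,\pi_k\}$ and $\{\gt_i,\gt_j,\gt_k\}$, which is exactly the generalised Lemma of Tangents whose validity for $n>1$ the survey explicitly flags as open. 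So the hard step is assumed rather than proved.

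The actual argument of \cite{thaJ94} is of a different nature. One passes to the translation generalised quadrangle $T(O)$ of order $(q^n,q^{2n})$; goodness of $O$ at $\pi$ translates into a $3$-regularity (property (G)) condition there, which produces, for each of the $q^{2n}+q^n$ spaces $\Pi_{3n-1}$ above, a subquadrangle of order $q^n$ isomorphic to $T(O\cap\Pi_{3n-1})$. For $q$ odd these subquadrangles are shown to be classical, i.e.\ isomorphic to the quadrangle of $Q(4,q^n)$, using the characterisation machinery for generalised quadrangles of odd order, and $T(O')\cong Q(4,q^n)$ is equivalent to $O'$ being a pseudo-conic. If you want a purely projective argument along your lines, you would have to derive the perspectivity condition of \cite{thaJ11} for the sections $O\cap\Pi_{3n-1}$ directly from goodness; that is not routine and is not supplied by your sketch.
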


Veronese surfaces play a key role in the theory of pseudo-ovoids.

\begin{defn}
In $\PG(5,K)$, with $K$ any field, in a suitable reference system, a \emph{Veronese surface} $\cV^4_2$ consists of the points
\[
\p(x_0^2,x_1^2,x_2^2,x_1x_2,x_2x_0,x_0x_1)
\]
with $x_0,x_1,x_2\in K$ and not all zero.
\end{defn}

\begin{theorem}
\begin{enumerate}[\rm(i)]

\item When $K = \Fq$, the surface $\cV^4_2$ contains $q^{2} + q + 1$ points and conics.

\item Any two of the points are contained in just one of these conics. 
\end{enumerate}
\end{theorem}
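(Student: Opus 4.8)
The plan is to realise $\cV^4_2$ as the image of the \emph{Veronese map} $\nu\colon\PG(2,q)\to\PG(5,q)$, $\p(x_0,x_1,x_2)\mapsto\p(x_0^2,x_1^2,x_2^2,x_1x_2,x_2x_0,x_0x_1)$, and then to transport through $\nu$ the incidence between the points and lines of $\PG(2,q)$. First I would check that $\nu$ is well defined and injective: replacing $(x_0,x_1,x_2)$ by $\lambda(x_0,x_1,x_2)$ scales the image vector by $\lambda^2$, so $\nu$ is well defined on $\PG(2,q)$; and if $\nu(\p(x))=\nu(\p(y))$ with, say, $x_0\neq0$, then the leading image coordinate $x_0^2$ is non-zero, so after rescaling the representatives so that $x_0=1$ and $y_0=1$ the two image vectors coincide, and the last two coordinates give $x_1=y_1$ and $x_2=y_2$; the cases $x_1\neq0$ and $x_2\neq0$ are symmetric. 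Hence $\nu$ is a bijection onto $\cV^4_2$, so $|\cV^4_2|=|\PG(2,q)|=q^2+q+1$, which is the point count in (i).

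For the conics, the fact I would exploit is that a linear substitution $y_i=\sum_j a_{ij}x_j$ on $\PG(2,q)$ induces a linear substitution on the six quadratic monomials $x_jx_l$; thus each projectivity $\phi_A$ of $\PG(2,q)$ lifts to a projectivity $\phi_B$ of $\PG(5,q)$ with $\phi_B\circ\nu=\nu\circ\phi_A$, so $\phi_B$ stabilises $\cV^4_2$ and sends $\nu(\ell)$ to $\nu(\phi_A(\ell))$ for every line $\ell$. Since the projectivities act transitively on the lines of $\PG(2,q)$, it is enough to treat $\ell_0\colon x_2=0$, where $\nu(\ell_0)=\{\p(x_0^2,x_1^2,0,0,0,x_0x_1)\}$ lies in the plane $X_2=X_3=X_4=0$ and, in the coordinates $(X_0,X_1,X_5)$ of that plane, is the conic $X_0X_1=X_5^2$; this spans the plane (it contains $(1,0,0)$, $(0,1,0)$, $(1,1,1)$) and is non-singular with $q+1$ points. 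Since $\nu$ is injective, $\ell\mapsto\nu(\ell)$ is injective, so the $q^2+q+1$ lines of $\PG(2,q)$ produce $q^2+q+1$ conics lying on $\cV^4_2$, which completes (i). (It is also classical, though not needed below, that these are precisely the conics contained in $\cV^4_2$.)

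Part (ii) then follows formally. Given two distinct points $Q_1,Q_2$ of $\cV^4_2$, write $Q_j=\nu(R_j)$; injectivity of $\nu$ gives $R_1\neq R_2$, there is a unique line $\ell$ of $\PG(2,q)$ through $R_1$ and $R_2$, and $Q_1,Q_2\in\nu(\ell)$. Conversely, if a conic $\nu(\ell')$ of the family contains $Q_1$ and $Q_2$, then $R_1,R_2\in\nu^{-1}(\nu(\ell'))=\ell'$ (injectivity again), so $\ell'=\ell$; hence exactly one conic of the family passes through both points.

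I expect the only real work to lie in the middle step: showing that $\nu(\ell)$ is a genuine non-singular conic spanning a plane rather than a degenerate curve or a set of collinear points. The delicate case is $q$ even, where the doubled cross terms vanish from the three square coordinates, and one checks directly that the quadratic form $X_0X_1-X_5^2$ is irreducible — equivalently, that the three vectors $(1,0,0)$, $(0,1,0)$, $(1,1,1)$ are independent and that $\p(x_0,x_1)\mapsto\p(x_0^2,x_1^2,x_0x_1)$ is an injection of $\PG(1,q)$ onto a $(q+1)$-point set; for $q$ odd the non-degeneracy reduces to a non-vanishing $3\times3$ determinant.
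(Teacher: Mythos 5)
Your argument is correct and is exactly the classical one that the paper implicitly relies on (it states this theorem without proof, citing \cite[Chapter 25]{hirJ91b} and \cite[Chapter 4]{hirJ16}, where the proof proceeds just as you do: the Veronese map is a bijection from $\PG(2,q)$ onto $\cV^4_2$ carrying lines to non-singular conics, so the point--line incidence of $\PG(2,q)$ transports to the point--conic incidence on the surface). The one point you rightly flag --- that the $q^2+q+1$ conics $\nu(\ell)$ are \emph{all} the conics lying on $\cV^4_2$, which is needed if ``contains $q^2+q+1$ conics'' is read as an exact count --- is indeed classical but does require the extra observation that every plane meets $\cV^4_2$ in the image of a line or in at most a few points, so it is worth either including or citing explicitly.
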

The planes containing the conics are the \emph{conic planes} of $\cV^4_2$. For
more on Veronese surfaces, see Hirschfeld and Thas \cite[Chapter 25]{hirJ91b}, 
 \cite[Chapter 4]{hirJ16}.

The classification of good pseudo-ovoids in $\PG(4n-1,q),\ q$ odd, is now given.

\begin{theorem}{\rm (\cite{thaJ97},\cite{thaJ99} )}
Let $O = \{\pi,\pi_1,\pi_2,\ldots,\pi_{q^{2n}}\}$ be a pseudo-ovoid in the space
$\PG(4n-1,q),\ q$ odd$,$ that is good at $\pi$. Then there are  three separate cases.
\begin{enumerate}[\rm(a)]
\item There exists $\Pi_3$ in $\PG(4n-1,q^n)$ which has exactly one point in
common with the extension $\ov{\pi}$ of $\pi$ to $\Fqn$ and with the extensions
$\ov{\pi_i}$ of $\pi_i$ to $\Fqn$ for $i = 1, 2, \ldots,q^{2n}$. These $q^{2n} + 1$
points form an elliptic quadric in $\Pi_3$ and $O$ is a pseudo-quadric.

\item There exists $\Pi_4$ in $\PG(4n-1,q^n)$ that intersects the extension
$\ov{\pi}$ of $\pi$ to $\Fqn$ in a line $\ell$ and which has exactly one point $R_i$ in
common with each $\ov{\pi_i},\ i = 1, 2, \ldots,q^{2n}$. Further$,$ let $\cW =
\{R_i\mid i = 1, 2, \ldots,q^{2n}\}$ and let $\cM$ be the set of all common
points of $\ell$ and the conics that contain exactly $q^n$ points of $\cW$.
Then the set $\cM\cup\cW$ is the projection of a Veronese surface $\cV^4_2$
from a  point $P$ in a conic plane $\gth$ of $\cV^4_2$ onto a hyperplane
$\Pi_4$ of the $\PG(5,q^n)$ containing $\cV^4_2;$ the point $P$ is an exterior
point of the conic $\cV^4_2\cap\gth$.

Here, $O$ is a non-classical \emph{Kantor-Knuth pseudo-ovoid}.

\item There exists $\Pi_5$ in $\PG(4n-1,q^n)$ that intersects the extension
$\ov{\pi}$ of $\pi$ in a plane $\mu$ and which has exactly one point $R_i$ in
common with each $\ov{\pi_i},\ i = 1, 2, \ldots,q^{2n}$. Let $\cW = \{R_i\mid
i = 1, 2, \ldots,q^{2n}\}$ and let $\cC$ be the set of all common points of
$\mu$ and the conics that contain exactly $q^n$ points of $\cW$. In this case$,$
$\cW\cup \cC$ is a Veronese surface in $\Pi_5$.
\end{enumerate} 
\end{theorem}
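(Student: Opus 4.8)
The plan is to reduce the problem, via the standard field-reduction dictionary, to a statement about point sets in $\PG(3,q^n)$ that can be attacked with the geometry of the Veronese surface $\cV^4_2$. First I would pass to the extension $\gO^* = \PG(4n-1,q^n)$ of $\gO=\PG(4n-1,q)$, and work with the conjugate solids $\xi_1,\ldots,\xi_n$ forming a Galois orbit. The hypothesis that $O$ is good at $\pi$ means every $\Pi_{3n-1}$ through $\pi$ meeting $O$ in at least three elements meets it in exactly $q^n+1$; by the preceding theorem of Thas, $\pi$ is then contained in exactly $q^{2n}+q^n$ pseudo-conics lying on $O$. The key structural move is to look at the trace on one fixed solid, say $\xi=\xi_1$: the extension $\ov{\pi}$ of $\pi$ meets $\xi$ in a subspace $\sigma$ of dimension $0$, $1$, or $2$ (these are precisely the three cases (a), (b), (c)), and each $\ov{\pi_i}$ meets $\xi$ in a single point $R_i$. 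One should first prove this dimension trichotomy: since $\pi$ has $\Fq$-dimension $n-1$ and $\ov\pi$ is defined over $\Fqn$ and fixed by the Galois group, $\dim(\sigma)$ is forced into $\{0,1,2\}$ by a rank/conjugacy argument on how $\ov\pi$ can intersect a member of its conjugate family.

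Next I would analyse the point set $\cW=\{R_i\}$ in $\xi=\PG(3,q^n)$ together with the conics coming from the pseudo-conics through $\pi$: each pseudo-conic on $O$ through $\pi$, being regular by the good-at-$\pi$ theorem, traces on $\xi$ a subset of a conic, and the $q^{2n}+q^n$ such pseudo-conics give rise to $q^{2n}+q^n$ conics, each through exactly $q^n$ points of $\cW$ (the "missing" point on each being where the conic meets $\sigma$). The combinatorics here — two points of $\cW$ lying on a unique such conic, the number of conics through a point, the incidence with $\sigma$ — is exactly the combinatorics of (a projection of) a Veronese surface $\cV^4_2$ in $\PG(5,q^n)$, using the theorem quoted just above that $\cV^4_2$ carries $q^2+q+1$ points and conics with any two points on a unique conic. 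The heart of the argument is a \emph{recognition theorem}: a set of $q^{2n}$ points carrying a family of conics with the Veronese incidence pattern must be the projection of $\cV^4_2$ from a point of a conic plane (cases (b), from an exterior point; (c), from a point not on the surface — in fact one recovers all of $\cV^4_2$), or, when $\sigma$ is a point, the $q^{2n}+1$ points $\cW\cup\{\text{pt}\}$ form an honest elliptic quadric in a $\Pi_3$ and $O$ is a pseudo-quadric by the theorem of Barlotti and Panella.

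The case split is driven entirely by $\dim\sigma\in\{0,1,2\}$, so after the trichotomy is established each branch is a separate, essentially self-contained geometric identification. In branch (a) one shows the $q^{2n}+1$ trace points span only a $\Pi_3$ and form an oval set there, which for $q^n$ odd is an elliptic quadric by Barlotti–Panella (Theorem \ref{thm3.2.4}), whence regularity of $O$ forces it to be a pseudo-quadric. In branch (b) the trace spans a $\Pi_4$ meeting $\ov\pi$ in a line $\ell$; the set $\cM\cup\cW$ (with $\cM$ the intersections of $\ell$ with the $q^n$-point conics) is identified as the projection of $\cV^4_2$ from an exterior point of a conic $\cV^4_2\cap\gth$, which is the Kantor–Knuth configuration. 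Branch (c) is similar with a plane $\mu$ and yields a full Veronese surface. The main obstacle I anticipate is the recognition theorem in branches (b) and (c): proving that an abstract conic-carrying point set with the right parameters is forced to be a (projected) Veronese surface. This is where one needs the fine structure of $\cV^4_2$ — its conic planes, the secant/tangent geometry, and the classification of planar sections — together with a careful count matching $q^{2n}+q^n$ conics and their mutual intersections; the supporting references \cite[Chapter 25]{hirJ91b} and \cite[Chapter 4]{hirJ16} on Veronese surfaces are exactly the tools that make this step go through, and the argument must also use that the pseudo-conics involved are regular (hence conics after field reduction), which is precisely what "good at $\pi$" buys us.
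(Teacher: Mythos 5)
First, a point of reference: the paper you are working from offers no proof of this theorem at all --- it is a survey, and the result is quoted from Thas's papers \cite{thaJ97} and \cite{thaJ99}. So your proposal can only be judged on its own internal coherence, and there it has a genuine gap at the very first step. You begin by ``working with the conjugate solids $\xi_1,\ldots,\xi_n$ forming a Galois orbit'' and then define $\sigma=\ov{\pi}\cap\xi_1$ and $R_i=\ov{\pi_i}\cap\xi_1$. But such a family of conjugate solids, each meeting every extended element $\ov{\pi_i}$ in exactly one point, exists precisely when $O$ is regular (elementary); for a general good pseudo-ovoid no such solids are given, and producing a subspace of $\PG(4n-1,q^n)$ that meets every $\ov{\pi_i}$ in exactly one point is itself the main content of the theorem, not a starting point. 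Note also that in cases (b) and (c) the distinguished subspace is a $\Pi_4$ or $\Pi_5$, not a solid, so it cannot arise as your $\xi_1$; your ``dimension trichotomy for $\sigma$'' is being run inside the wrong ambient object. The actual argument has to go the other way round: use goodness at $\pi$ (and $q$ odd) to get the $q^{2n}+q^n$ pseudo-conics on $O$ through $\pi$; each of these, being classical, determines conics in the $\Fqn$-extension of its span, all passing through a point of $\ov{\pi}$; the subspace $\Pi_3$, $\Pi_4$ or $\Pi_5$ is then \emph{constructed} as (contained in) the span of these conics, and the trichotomy is on the dimension of $\ov{\pi}$'s intersection with that span. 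Assuming the solid from the outset quietly imports the regularity you are trying to detect.

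The second half of your sketch correctly identifies the Veronese recognition step as the heart of cases (b) and (c), but no argument is actually offered for it: you state the needed incidence pattern ($q^{2n}$ points, conics through $q^n$ of them, two points on a unique conic) and then defer entirely to the general literature on $\cV^4_2$. The characterisations in \cite[Chapter 25]{hirJ91b} and \cite[Chapter 4]{hirJ16} do not hand you this recognition theorem off the shelf in the projected form needed for case (b) (projection from an exterior point of a conic plane); establishing it is a substantial portion of \cite{thaJ97} and \cite{thaJ99}. There is also a small internal inconsistency in your description of case (c): you first describe it as a projection of $\cV^4_2$ ``from a point not on the surface'' and later, correctly, as a full Veronese surface sitting in $\Pi_5$ with no projection involved. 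Case (a) as you present it is essentially right (an ovoid of $\PG(3,q^n)$, hence an elliptic quadric by Barlotti--Panella), but again only once the existence of the $\Pi_3$ meeting every $\ov{\pi_i}$ in one point has been established, which your setup assumes rather than proves.
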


\begin{rem}
\begin{enumerate}[(1)]

\item For more on Kantor-Knuth pseudo-ovoids, see 
\cite{thaJ21} and \cite{thaJ06}.

\item A Kantor-Knuth pseudo-ovoid is isomorphic to its translation dual.
Conversely, when $q$ is odd and the good pseudo-ovoid $O$ is isomorphic to its
translation dual, then $O$ is classical or of Kantor-Knuth type; see Sections
4.12.4 and 5.1.4 of \cite{thaJ06}.

\item Each known example of Class (c) has $q= 3^h$.

\item Good pseudo-ovoids play a key role in the theory of translation
generalised quadrangles. They also give rise to new results for particular
point-sets in classical polar spaces, as well as to the construction of new
projective planes, new flocks of quadratic cones in $\PG(3,q^n)$ and new 
semifields. For a detailed study of the relation between these objects, see
Lunardon \cite{lunG97,lunG22}.

\end{enumerate}
\end{rem}

\noindent
{\bf Open problem 12.}
If the pseudo-ovoid $O$ of $\PG(4n-1,q),\ q$ odd,  is good, but neither classical nor of Kantor-Knuth type, is $q$ necessarily a power of $3$? 
It is  more difficult to classify all pseudo-ovoids in $\PG(4n-1,q),\ q$ odd, in Case (c). 

\begin{theorem}{\rm (Blokhuis,  Lavrauw and Ball \cite{bloA03})}
Suppose that the good pseudo-ovoid $O$ of $\PG(4n-1,q),\ q$ odd$,$ satisfies the inequality
\[
q\geq 4n^2 - 8n + 2.
\]
Then $O$ is classical or of Kantor-Knuth type.
\end{theorem}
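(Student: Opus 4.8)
\medskip\noindent{\bf Proof proposal.}
The plan is to use the preceding classification theorem of Thas to reduce the statement to the non\-existence of one exotic family, and then to kill that family by a Galois--norm trick followed by a Hasse--Weil estimate. By that theorem, a good pseudo-ovoid $O$ of $\PG(4n-1,q)$, $q$ odd, is either in Case (a), where $O$ is a pseudo-quadric and hence classical, or in Case (b), where $O$ is of Kantor--Knuth type, or in Case (c), where $\cW\cup\cC$ is a Veronese surface $\cV^4_2$ in $\PG(5,q^n)$. So it suffices to show that, for $q\geq 4n^2-8n+2$, Case (c) cannot occur, i.e.\ a good pseudo-ovoid over $\Fq$ that is neither classical nor of Kantor--Knuth type does not exist. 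I would pass through the standard dictionary between good pseudo-ovoids of $\PG(4n-1,q)$ and semifield flocks of the quadratic cone of $\PG(3,q^n)$ (Thas; Bader--Lunardon--Thas; see also Payne and Thas \cite{payS84} and Lunardon \cite{lunG97}): such an $O$ corresponds to a semifield flock whose two defining additive maps $f,g\colon\Fqn\to\Fqn$ are in fact $\Fq$-linear, i.e.\ $q$-linearised polynomials $f(u)=\sum_{i} a_i u^{q^i}$, $g(u)=\sum_{i} b_i u^{q^i}$ over $\Fqn$. Using additivity, the flock condition collapses to the one-variable condition that
\[
\phi(u)\ :=\ f(u)^2-4u\,g(u)
\]
is a non-square in $\Fqn$ for every $u\in\Fqn^{\ast}$; here the linear flock ($f,g$ both $\Fqn$-linear) is $O$ classical, and the Kantor--Knuth flock ($f=0$, $g(u)=bu^{q^e}$ with $b$ a non-square) is $O$ of Kantor--Knuth type. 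The task is thus to show that, for $q\geq 4n^2-8n+2$, any $\Fq$-linear pair $f,g$ with $\phi$ a non-square on $\Fqn^{\ast}$ is of one of these two kinds.

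Since $\deg\phi$ can be as large as $2q^{n-1}$, a Hasse--Weil argument on the hyperelliptic curve $y^2=\phi(u)$ over $\Fqn$ is useless, and the key is to descend to $\Fq$. Writing $\chi_{\Fqn}=\chi_q\circ N$, with $N=N_{\Fqn/\Fq}$ the norm and $\chi$ the quadratic character, the hypothesis becomes: $P(u):=N(\phi(u))$ is a non-square in $\Fq$ for every $u\in\Fqn^{\ast}$. Now $N(\phi(u))=\prod_{i=0}^{n-1}\phi^{(\sigma^{i})}(u^{q^{i}})$, a product of $n$ Galois conjugates (with $\sigma$ the Frobenius). Viewing $\Fqn$ as an $n$-dimensional $\Fq$-space, each factor $\phi^{(\sigma^{i})}(u^{q^{i}})$ is a polynomial map of $\Fq$-degree $\le 2$ in the $n$ coordinates of $u$, being built from the $\Fq$-linear maps $f^{(\sigma^{i})},g^{(\sigma^{i})}$, a Frobenius, and a single multiplication/squaring in $\Fqn$ (each an $\Fq$-polynomial of degree $\le 2$). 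Hence $P\in\Fq[u_1,\dots,u_n]$ has total degree $\le 2n$. Restricting $P$ to an arbitrary affine line $\{v+tw:t\in\Fq\}$ gives $h(t)=h_{v,w}(t)\in\Fq[t]$ of degree $\le 2n$ that is a non-square for all but at most one $t\in\Fq$, and in particular has at most one root in $\Fq$. The curve $y^2=h(t)$ over $\Fq$ then has only a bounded number of $\Fq$-rational points, while Hasse--Weil forces at least $q+1-2g\sqrt q$ of them with $g\le n-1$; for $q\ge 4n^2-8n+2$ this is a contradiction, unless $h$ is a non-square constant times a perfect square (the reducible and low-genus degenerations being handled directly). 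As this must hold on every line, $P$ itself is a non-square constant times the square of a polynomial $G$ of degree $\le n$: $P=cG^2$.

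The final step is to feed $P=cG^2$ back through $N(\phi(u))=\prod_{i=0}^{n-1}\phi^{(\sigma^{i})}(u^{q^{i}})$: comparing the multisets of irreducible factors over $\overline{\Fq}$ of the two sides, the $n$ Galois conjugates $\phi^{(\sigma^{i})}(u^{q^{i}})$ — each a quadric in $n$ variables — must pair up into squares in a way compatible with the cyclic Frobenius action, and this rigidity forces $\phi$ into an essentially monomial shape, which, together with the $\Fq$-linearity of $f$ and $g$, leaves only $f(u)=au,\ g(u)=bu$ (so $O$ is classical) or $f=0,\ g(u)=bu^{q^{e}}$ with $b$ a non-square (so $O$ is of Kantor--Knuth type). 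I expect this last bookkeeping to be the main obstacle, together with making the first reduction airtight — one must verify that the geometric $\Fq$-structure of $O$ corresponds precisely to the $\Fq$-linearity of $f,g$, and in the endgame conjugate factors can combine into squares in non-obvious ways (already $(u-a)(u-a)^{q}=(u-a)^{q+1}$ is a perfect square because $q+1$ is even), so the coincidences among the $n$ conjugates have to be analysed with care. A secondary technical point is that extracting the exact constant $4n^2-8n+2=4(n-1)^2-2$ requires the sharp form of the Hasse--Weil (or Stöhr--Voloch) bound for these specific curves, plus careful accounting of their points at infinity and of the single exceptional value of $t$.
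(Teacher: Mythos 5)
This theorem is quoted in the survey without proof, so the comparison is with the argument in the cited reference of Blokhuis, Lavrauw and Ball. Their proof is not analytic: after the same translation to a semifield flock, the pair $(f,g)$ is encoded as an $\Fq$-linear set of rank $n$ in $\PG(2,q^n)$ contained in the internal points of a conic $C$; since the $q^n+1$ tangent lines of $C$ contain no internal points, they must all be skew to this linear set, and a count of how many lines of $\PG(2,q^n)$ can be skew to a rank-$n$ linear set that spans the plane produces exactly the threshold $q\geq 4n^2-8n+2$, beyond which the linear set lies on a line and the flock is linear or of Kantor--Knuth type. Your reduction to the condition that $\phi(u)$ be a nonsquare for all $u\neq 0$, and the identity $\chi_{\Fqn}=\chi_q\circ N$, are correct; but from that point your route diverges completely from theirs, and it has genuine gaps.

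The most serious gap is that the entire classification is deferred to the final ``bookkeeping'' step: even granting the polynomial identity $N(\phi(u))=cG(u)^2$, you give no argument that it forces $(f,g)$ to be the linear or the Kantor--Knuth pair. That implication is $q$-independent, it is where all of the difficulty of classifying semifield flocks actually lives, and it is not a routine comparison of irreducible factors --- the $n$ conjugate quadrics $\phi^{\sigma^i}(u^{q^i})$ in $n$ variables can be reducible and can coincide up to squares in many ways, so recovering the shape of $f$ and $g$ from their product being $cG^2$ is not visibly easier than the original problem; you yourself flag it as ``the main obstacle,'' which is to say the proof is absent. Second, the quantitative step does not reach the stated constant: $h_{v,w}$ has degree up to $2n$, and Weil gives $\bigl|\sum_{t}\chi(h(t))\bigr|\leq(2n-1)\sqrt q$ against $\sum_{t}\chi(h(t))\leq-(q-2)$, a contradiction only for $q$ roughly beyond $(2n-1)^2=4n^2-4n+1$, which exceeds $4n^2-8n+2$ by about $4n$; for $n=2$ the theorem covers every odd $q$, while this argument would miss at least $q=3,5,7$, and there is no visible source of the extra saving (the curves involved are not special enough for St\"ohr--Voloch to close a gap of that size uniformly). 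Finally, the globalisation from ``$c\times$square on every affine line over $\Fq$'' to ``$c\times$square as a polynomial'' needs a Bertini-type argument valid for these values of $q$; that is the most repairable of the three points, but it too is only asserted.
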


\noindent
{\bf Open problem 13.}
Improve the inequality in this theorem.
\vspace*{2mm}

The known examples of pseudo-ovoids for $q$ odd are the following, 
\cite{thaJ06}.

\begin{enumerate}[(i)]

\item Kantor-Knuth pseudo-ovoids, including the classical ones.

\item Ganley and Roman pseudo-ovoids, for $q=3^h,\ h\geq 1$. They are
translation dual to each other; the Roman ones are due to Payne \cite{payS89}.

\item The Penttila-Williams-Bader-Lunardon-Pinneri pseudo-ovoid and its
translation dual, for $q = 3^5$.

\end{enumerate}

\section{Pseudo-ovoids in $\PG(2n + m -1,q),\ m\neq n$ and $n \geq 1$}
\label{sec9}

\begin{defn}
In $\gO =\PG(2n + m -1,q),\ m\neq n$ and $n \geq 1$, define a set $O = O(n,m,q)$ of subspaces as follows: $O$ is a set of 
$(n-1)$-dimensional subspaces $\pi_i,\ i = 0, 1, \ldots,q^{m}$, such that
\begin{enumerate}[(i)]
\item every three generate a $\Pi_{3n -1}$;

\item for every $i\in \{0, 1, \ldots,q^{m}\}$, there is an $(m + n-1)$-dimensional subspace $\gt_i$ that contains $\pi_i$ and
is disjoint from $\pi_j$ for $j\neq i$.
\end{enumerate}
\end{defn}
The space $\gt_i$ is the \emph{tangent space} of $O$ at $\pi_i$, and is uniquely defined by $O$ and $\pi_i$.

\begin{defn}
The set $O$ is a \emph{generalised ovoid} or a \emph{pseudo-ovoid} or an \emph{egg} or an \emph{$[n-1]$-ovoid} of
$\PG(2n +2m -1,q)$.
\end{defn}

\begin{rem}
For $m= 2n$, the pseudo-ovoids of Section 11 are obtained.
\end{rem}

\begin{theorem}{\rm (Payne and Thas \cite{payS84,payS09})}
\label{thm23}
\begin{enumerate}[\rm(i)]
\item $n < m \leq 2n$ for any  $O(n,m,q)$.

\item $n(c+1) = mc,$ with $c\geq 1$ odd.

\item $m = 2n$ for $q$ even. 

\end{enumerate}
\end{theorem}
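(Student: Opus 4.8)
The strategy is to read off the numerical constraints from the incidence geometry of $O$ together with the generalised quadrangle it produces. Write $\Omega=\PG(2n+m-1,q)$. First I would record the elementary incidences. Axiom (i) forces any two of the $\pi_i$ to be skew (otherwise $\langle\pi_i,\pi_j,\pi_k\rangle$ would have dimension $<3n-1$) and any three to span a $\Pi_{3n-1}$; since such a $\Pi_{3n-1}$ must lie in $\Omega$, we get $3n-1\le 2n+m-1$, that is $n\le m$, and as $m\ne n$ this already gives $n<m$. From axiom (ii) and dimension counting, $\tau_i$ and $\pi_j$ are complementary subspaces of $\Omega$ whenever $i\ne j$ (their dimensions add up to $\dim\Omega-1$ and they are disjoint), $\langle\pi_i,\pi_j\rangle\cap\tau_i=\pi_i$, and $\dim(\tau_i\cap\tau_j)=m-1$; these facts feed the later counting.

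For the bound $m\le 2n$ I would pass to the generalised quadrangle $T(O)$ of Tits and Thas: embed $\Omega$ as a hyperplane of $\PG(2n+m,q)$, and take as points the points outside $\Omega$, the $(m+n)$-dimensional subspaces of $\PG(2n+m,q)$ meeting $\Omega$ in some tangent space $\tau_i$, and a symbol $(\infty)$; as lines, the $n$-dimensional subspaces meeting $\Omega$ in some $\pi_i$, together with the $\pi_i$ themselves; incidence is containment. Axioms (i)--(ii) for $O$ are precisely what makes $T(O)$ a generalised quadrangle of order $(q^n,q^m)$. Since $q^n,q^m>1$, Higman's inequality $t\le s^2$ gives $q^m\le q^{2n}$, i.e.\ $m\le 2n$; combined with the previous paragraph this establishes (i).

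For the divisibility in (ii), put $s=q^n$, $t=q^m$, $d=m-n$, so $1\le d\le n$ by (i). The point graph of $T(O)$ is strongly regular, so integrality of its restricted eigenvalue multiplicities forces $s+t\mid st(s+1)(t+1)$. Now $s+t=q^n(q^d+1)$ and $\gcd(q^d+1,q)=1$, so this reduces to $q^d+1\mid (q^n+1)(q^m+1)$; reducing modulo $q^d+1$ (where $q^d\equiv-1$) gives $(q^n+1)(q^m+1)\equiv 1-q^{2n}$, hence $q^d+1\mid q^{2n}-1$. The multiplicative order of $q$ modulo $q^d+1$ is exactly $2d$, so this holds iff $2d\mid 2n$, i.e.\ $d\mid n$. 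Writing $c=n/d$ we get a positive integer $c$ with $n=c(m-n)$, that is $n(c+1)=mc$.

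The hard part will be the remaining assertions: that $c$ is odd, and that $m=2n$ when $q$ is even. The bare generalised quadrangle does not suffice here, since genuine generalised quadrangles of order $(q^2,q^3)$ exist, so $c=2$ is not excluded for arbitrary quadrangles. One must exploit that $T(O)$ is a \emph{translation} generalised quadrangle, equivalently the full symmetry of the egg: the tangent spaces form a second egg $O^*$ of the same type (the translation dual), and one analyses how $O$ meets its $\Pi_{3n-1}$-sections, inside each of which $O$ induces a generalised arc carrying its own tangent spaces. Counting these induced arcs and tracking their parameters --- for $q$ even using that an induced pseudo-oval of a $(3n-1)$-section acquires a nucleus, which forces the surplus $\Pi_{3n-1}$-sections to collapse --- yields the parity of $c$ and, for $q$ even, $c=1$; this is the step that carries the real content, and it is the argument of Payne and Thas \cite{payS84,payS09}.
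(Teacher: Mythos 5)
The paper states this theorem with a citation to Payne and Thas and gives no proof of its own, so your proposal can only be judged on its own terms. The parts you actually argue are correct and are the standard arguments: the dimension count $3n-1\le 2n+m-1$ gives $n\le m$ (hence $n<m$ since $m\ne n$ by hypothesis); the model $T(O)$ is a generalised quadrangle of order $(q^n,q^m)$ and Higman's inequality gives $m\le 2n$; and the divisibility $s+t\mid st(s+1)(t+1)$, reduced modulo $q^{m-n}+1$ exactly as you do, gives $(m-n)\mid n$ and hence $n(c+1)=mc$ with $c=n/(m-n)$ a \emph{positive integer}. All of that checks out.

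The genuine gap is that the two assertions carrying the real content of the theorem --- that $c$ is \emph{odd}, and that $m=2n$ when $q$ is even --- are not proved. You say so yourself, but acknowledging a gap does not close it: as you correctly observe, the integrality conditions for a bare GQ of order $(s,t)$ cannot rule out $c$ even (nothing at the level of Higman plus eigenvalue multiplicities distinguishes $(q^2,q^3)$ from admissible parameters), so one must use the egg structure itself, and your sketch of how to do this is not an argument. It is also not obviously the right mechanism: the phrase ``an induced pseudo-oval of a $(3n-1)$-section acquires a nucleus, which forces the surplus $\Pi_{3n-1}$-sections to collapse'' presupposes that a $\Pi_{3n-1}$ spanned by three elements of $O$ meets $O$ in a pseudo-oval, which is essentially the ``good'' property discussed in Section 8 of the paper and is precisely what is \emph{not} known in general. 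The standard route (in Payne--Thas, going back to Thas's 1971/1974 papers) instead fixes an element $\pi_0$ with tangent space $\tau_0$ and passes to the quotient $\Omega/\pi_0\cong\PG(m+n-1,q)$, where the spaces $\langle\pi_0,\pi_j\rangle/\pi_0$ ($j\ne0$) together with $\tau_0/\pi_0$ form a partition of $\PG(m+n-1,q)$ into $q^m$ subspaces of dimension $n-1$ and one of dimension $m-1$; the arithmetic of such mixed partitions, together with the parity analysis for $q$ even, is where the conditions (ii) and (iii) come from. Without some argument of this kind, your proposal establishes only part (i) and a weakened form of part (ii), and none of part (iii).
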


\begin{defn}
$O(n,n,q)$ is a {\em pseudo-oval} in $\PG(3n -1,q)$.
\end{defn}

\noindent
{\bf Open problem 14.}
Does there exist an egg $O(n,m,q)$ for $q$ odd and $m \neq 2n$?





\section{Generalised quadrangles and the sets $O(n,m,q)$}
\label{sec10}

In this section, the equivalence between translation
generalised quadrangles and the sets $O(n,m,q)$ is explained.  

\begin{defn}
A finite {\em generalised quadrangle} (GQ) is an incidence structure
$\cS = (\cP,\cB,\mathbf{I})$ in which $\cP$ and $\cB$ are disjoint 
non-empty sets of {\em points} and {\em lines} for which $\mathbf{I}$
is a symmetric point-line incidence relation satisfying the following 
axioms:
\begin{enumerate}[(i)]

\item each point is incident with $t+1$ lines, $t\geq 1$, and two distinct
points are incident with at most one line;

\item each line is incident with $s+1$ points, $s\geq 1$, and two distinct
lines are incident with at most one point;

\item if $P$ is a point and $\ell$ is a line not incident with $P$, then there 
is always a unique pair $(Q,m)\subseteq \cP \times \cB$ for which 
$P\,{\bf I}\,m\,{\bf I}\,Q\,{\bf I}\,\ell$.
\end{enumerate}

\end{defn}
The integers $s$ and $t$ are the parameters of $\cS$, which has {\em order}
$(s,t)$; if $s=t$, then $\cS$ has {\em order} $s$.

There is a point-line duality for generalised quadrangles. This means that, in
any definition or theorem, interchanging `point' and `line' as well as
the parameters $s$ and $t$ gives a valid result.

Given two, not necessarily distinct, points $P$ and $Q$ of the generalised
quadrangle $\cS$, write $P\sim Q$ and say that $P$ and $Q$ are {\em
collinear}, provided that there is some line $\ell$ for which $P\,{\bf
I}\,{\ell}\,{\bf I}\,Q$; also, $P\not\sim Q$ means that $P$ and $Q$ are
not collinear. Dually, for $\ell,m\in \cB$, write $\ell\sim m$ or
$\ell\not\sim m$ as $\ell$ and $m$ are concurrent or not.

For $P\in \cP$, put $P^{\perp} = \{Q\in \cP\mid Q\sim P \}$; note that 
$P\in \cP^{\perp}$.
 
For terminology, notation and results on GQs, see the monographs \cite{payS84,payS09} of 
Payne and Thas.

\begin{theorem}
\begin{enumerate}[\rm(i)]
\item Let $\cS = (\cP,\cB,\mathbf{I})$ be a GQ of order $(s,t)$. If $|\cP| =v$ and 
 $|\cB| =b,$ then $v= (s+1)(st+1)$ and  $b= (t+1)(st+1)$; see {\rm 1.2.1} 
 of Chapter 1 in {\rm \cite{payS84,payS09}}. Also, 
$s+t$ divides $ st(s+1)(t+1)$; see {\rm 1.2.2} of Chapter 1 in {\rm\cite{payS84,payS09}}.

\item If $s > 1$ and $t > 1,$ then $t \leq s^2$ and dually $s \leq t^2$; see 
{\rm 1.2.3} of Chapter 1 in {\rm\cite{payS84,payS09}}.
\end{enumerate}
\end{theorem}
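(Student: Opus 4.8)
The plan is to prove the two parts by the standard double-counting arguments that underlie the theory of finite generalised quadrangles. For part (i), the idea is to count in two ways the set of flags $(P,\ell)$ with $P\,\mathbf{I}\,\ell$. Fix a point $P$; by axiom (i) it lies on exactly $t+1$ lines, and by axiom (ii) each such line carries $s+1$ points, one of which is $P$; no two of these lines share a further point, again by (ii). Hence $P$ is collinear with $(t+1)s$ points besides itself, so $|P^\perp| = 1 + s(t+1)$. Now fix a point $P$ and a line $\ell$ not through $P$: by axiom (iii) there is a unique point on $\ell$ collinear with $P$, which shows that $\ell$ contributes exactly one point to $P^\perp$. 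Counting the pairs $(Q,\ell)$ with $Q\in P^\perp\setminus\{P\}$, $Q\,\mathbf{I}\,\ell$, $P\notin\ell$, one gets, on the one hand, $\big(|\mathcal P| - |P^\perp|\big)\cdot 1$ (each line not meeting $P^\perp$ in $P$ is counted once via its unique point collinear with $P$ — more precisely, partition the lines not through $P$ and count the points on them collinear with $P$), and on the other hand $s(t+1)\cdot t$ (each of the $s(t+1)$ points $Q\neq P$ collinear with $P$ lies on $t$ lines not through $P$). Equating gives $|\mathcal P| = 1 + s(t+1) + st(t+1) = (s+1)(st+1)$. The formula $b = (t+1)(st+1)$ follows by the point-line duality, and then $v(t+1) = b(s+1)$ is the flag count $(s+1)(st+1)(t+1)$, consistent with both.

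For the divisibility statement $s+t \mid st(s+1)(t+1)$, the plan is the classical eigenvalue argument: form the point graph $\Gamma$ on $\mathcal P$ with adjacency $\sim$. This graph is regular of valency $k = s(t+1)$ on $v = (s+1)(st+1)$ vertices, and from the GQ axioms it is strongly regular with parameters $\lambda = s-1$ and $\mu = t+1$ (two collinear points have exactly $s-1$ common neighbours on the line joining them and none off it; two non-collinear points have exactly $t+1$ common neighbours, one on each line through one of them). The adjacency matrix therefore has eigenvalues $k$, $s-1$ (the positive restricted eigenvalue) and $-t-1$, with multiplicities $1$, $f$, $g$. Using $1 + f + g = v$ and $k + (s-1)f + (-t-1)g = \operatorname{trace} = 0$, one solves for $f$ and $g$; the requirement that these multiplicities be non-negative integers forces $s+t$ to divide the relevant product, which after simplification is exactly $st(s+1)(t+1)$. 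I would cite 1.2.1 and 1.2.2 of Chapter 1 of \cite{payS84,payS09} for the precise bookkeeping here, since the statement explicitly defers to those references.

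For part (ii), the bound $t \le s^2$ when $s,t>1$, the plan is the Higman inequality, proved again by a counting-plus-positivity argument. Fix two non-collinear points $P$ and $Q$ and consider the set $\{P,Q\}^{\perp}$ of the $t+1$ points collinear with both, together with $\{P,Q\}^{\perp\perp}$, the points collinear with every point of $\{P,Q\}^{\perp}$. Counting triples $(R, R', R'')$ with $R, R'' \in \{P,Q\}^{\perp}$, $R' \sim R$, $R' \sim R''$, $R\neq R''$, and comparing with the number of points at suitable distances, yields an inequality whose rearrangement is $t \le s^2$; equivalently, one invokes the integrality of the strongly-regular-graph multiplicities $f,g$ above, whose non-negativity and the Krein-type condition give $t\le s^2$, and dually $s\le t^2$. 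I would again defer the detailed computation to 1.2.3 of Chapter 1 of \cite{payS84,payS09}. The main obstacle — more bookkeeping than conceptual — is getting the intersection numbers $\lambda$ and $\mu$ of the point graph exactly right from the three GQ axioms and then carrying the eigenvalue-multiplicity algebra through without sign errors; once the strongly regular parameters are in hand, both the divisibility condition and the inequality $t\le s^2$ are immediate consequences of the integrality and non-negativity of $f$ and $g$.
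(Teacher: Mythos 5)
The paper itself gives no proof of this theorem: it is quoted verbatim from 1.2.1--1.2.3 of Payne and Thas, so there is no in-paper argument to compare against, and your overall strategy (a flag/double count for $v$ and $b$, the strongly regular point graph and integrality of eigenvalue multiplicities for the divisibility, and Higman's inequality for $t\le s^2$) is exactly the standard route taken in the cited source. Your parameters $\lambda=s-1$, $\mu=t+1$, eigenvalues $s-1$ and $-t-1$, and the resulting multiplicity $f=st(s+1)(t+1)/(s+t)$ are all correct, and deferring the bookkeeping of 1.2.2 and 1.2.3 to the reference is reasonable for a survey-level statement.

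There is, however, a concrete error in your count of $v$. The chain $|\mathcal P| = 1 + s(t+1) + st(t+1) = (s+1)(st+1)$ is false: the middle expression equals $1+s+2st+st^2$ while the right-hand side is $1+s+st+s^2t$, and these agree only when $s=t+1$. The source of the error is that your double count of the pairs $(Q,\ell)$ with $Q\in P^\perp\setminus\{P\}$, $Q\,\mathbf{I}\,\ell$, $P$ not on $\ell$, enumerates the \emph{lines not through $P$} (each such line carries exactly one point collinear with $P$, by axiom (iii)), not the points non-collinear with $P$; identifying that quantity with $|\mathcal P|-|P^\perp|$ is unjustified. As set up, your count correctly yields $b-(t+1)=st(t+1)$, hence $b=(t+1)(st+1)$ --- so you have in fact proved the formula for $b$, not $v$. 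To get $v$ directly, count instead the pairs $(R,Q)$ with $R\sim P$, $R\ne P$, $Q\not\sim P$, $Q\sim R$: each of the $s(t+1)$ points $R$ lies on $t$ lines missing $P$, each contributing $s$ points $Q$ not collinear with $P$ (again by the uniqueness in axiom (iii)), while each $Q\not\sim P$ arises from exactly $t+1$ points $R$, one on each line through $P$. Hence the number of points not collinear with $P$ is $s(t+1)\cdot st/(t+1)=s^2t$ and $v=1+s(t+1)+s^2t=(s+1)(st+1)$; alternatively, just derive $b$ as you effectively did and obtain $v$ by duality. With that repair the proposal is sound.
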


\begin{defn}
Let $\cS = (\cP,\cB,\mathbf{I})$ be a GQ of order $(s,t)$. If $P\sim Q,\, P\neq Q,$ 
or if $P\not\sim Q$ and $|\{P,Q\}^{\perp\perp}| = t + 1,$ then the pair $\{P,Q\}$ is 
{\em regular}. The point $P$ is {\em regular} if $\{P,Q\}$ is 
 regular for all $Q\in \cP,\,Q\neq P$.
\end{defn}

\begin{theorem}
If $\cS$ contains a regular pair $\{P,Q\},$ then either $s=1$ or $s\geq t$; see 
{\rm 1.3.6} of Chapter 1 in {\rm\cite{payS84,payS09}}. 
\end{theorem}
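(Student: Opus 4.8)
The plan is to extract from a regular pair of non-collinear points a single counting inequality --- ``the number of points on a certain configuration is at most $|\cP|$'' --- and then to observe that this inequality collapses exactly to $(s-1)(t-s)\le 0$, which is the assertion; throughout, the statement is read, as in $1.3.6$ of \cite{payS84,payS09}, for a regular pair of non-collinear points.

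So let $\{P,Q\}$ be a regular pair with $P\not\sim Q$, and put $Z=\{P,Q\}^{\perp}=\{z_0,\ldots,z_t\}$ and $Y=\{P,Q\}^{\perp\perp}=\{x_0,\ldots,x_t\}$; each has exactly $t+1$ elements --- the first because axiom (iii) forces two non-collinear points to have precisely $t+1$ common neighbours, the second by regularity. First I would record the elementary structural facts. The points of $Z$ are pairwise non-collinear: if $z_i\sim z_j$ on a line $\ell$, then whichever of $P,Q$ lies off $\ell$ (at least one does, since $P\not\sim Q$) would have two distinct lines meeting $\ell$, contradicting axiom (iii); the same argument, applied to a non-collinear pair $z_0,z_1\in Z$ in place of $P,Q$, shows the points of $Y$ are pairwise non-collinear as well, and in particular $Z\cap Y=\emptyset$. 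Also, each $z_i$ is collinear with each $x_j$ --- this is what $Y=Z^{\perp}$ says --- and for fixed $j$ the $t+1$ lines $x_jz_i$ are exactly the lines through $x_j$.

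The decisive use of regularity is next: for $i\ne j$ we have $Z\subseteq x_i^{\perp}\cap x_j^{\perp}=\{x_i,x_j\}^{\perp}$, while $|\{x_i,x_j\}^{\perp}|=t+1=|Z|$ (using $x_i\not\sim x_j$ and axiom (iii)), so $x_i^{\perp}\cap x_j^{\perp}=Z$. Hence the sets $x_j^{\perp}\setminus Z$ for $j=0,\ldots,t$ are pairwise disjoint, while $Z\subseteq x_j^{\perp}$ for every $j$; since $|x_j^{\perp}|=1+s(t+1)$, the set $A:=\bigcup_{j=0}^{t}x_j^{\perp}$ satisfies
\[
|A| \;=\; |Z| + \sum_{j=0}^{t}\bigl(|x_j^{\perp}|-|Z|\bigr) \;=\; (t+1)\bigl(2+(s-1)(t+1)\bigr).
\]
Now $A\subseteq\cP$ and $|\cP|=(s+1)(st+1)$, so $(t+1)\bigl(2+(s-1)(t+1)\bigr)\le(s+1)(st+1)$, and a straightforward expansion reduces this to $(s-1)(t-s)\le 0$. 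Since $s\ge 1$, it follows that $s=1$ or $s\ge t$.

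The step I expect to require the most care is the clean evaluation of $|A|$; the key input is the identity $x_i^{\perp}\cap x_j^{\perp}=Z$ for $i\ne j$, which both furnishes the disjointness of the sets $x_j^{\perp}\setminus Z$ and is precisely where the regularity hypothesis is used (together with $|Y|=t+1$). The remaining algebra is routine, but the crux is that the inequality $|A|\le|\cP|$ collapses exactly to $(s-1)(t-s)\le 0$ --- without regularity it would collapse to nothing.
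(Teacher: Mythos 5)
Your proof is correct. The paper gives no argument of its own here (it simply cites 1.3.6 of Payne--Thas), and your argument is essentially the standard one: the whole weight is carried by the observation that, for distinct $x_i,x_j\in\{P,Q\}^{\perp\perp}$, regularity forces $x_i^{\perp}\cap x_j^{\perp}=\{P,Q\}^{\perp}$, which is exactly the pivot of the proof in \cite{payS84,payS09}. The only difference is the final count: Payne and Thas localise at an auxiliary point $u\neq z_0,x_0$ on a grid line $z_0x_0$ (possible since $s>1$), note that $z_0$ is the unique point of $Z$ collinear with $u$, and pack the $t$ pairwise disjoint sets $\{u,x_i\}^{\perp}\setminus\{z_0\}$, $i\ge 1$, each of size $t$, into the $st$ points of $u^{\perp}$ off that line, getting $t^2\le st$ directly; you instead count $\bigl|\bigcup_j x_j^{\perp}\bigr|$ against $|\cP|=(s+1)(st+1)$, and the inequality collapses to $(s-1)(t-s)\le 0$. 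Your global version is marginally less economical algebraically but avoids choosing the auxiliary point, and your preliminary verifications (pairwise non-collinearity of $Z$ and of $Y$, $|x_j^{\perp}|=1+s(t+1)$, and the reading of the statement for a non-collinear regular pair, without which it is false) are all accurate.
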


\begin{defn}
\begin{enumerate}[\rm(i)]
\item Let $\cS = (\cP,\cB,\mathbf{I})$ be a GQ of order $(s,t),$ with $s\neq 1,t\neq 1$. A 
collineation $\gth$ in $\cS$ is an {\em elation} about the point $P$ if $\gth =I$ or if $\gth$
fixes all lines incident with $P$ and fixes no point of $\cP\bsl P^{\perp}$. If there is 
a group $\cH$ of elations about $P$ acting regularly on $\cP\bsl P^{\perp}$, then $\cS$ 
is an {\em elation generalised quadrangle} {\rm(EGQ)} with {\em elation group} $\cH$ 
and {\em base point} $P$.

\item If the translation group $\cH$ is abelian$,$ then the {\rm EGQ} is a
{\em translation generalised quadrangle} {\rm(TGQ)} with {\em base point} or
{\em translation point} $P$ and {\em translation group} $\cH$. In this case$,$
$\cH$ is the set of all elations about $P$; see \rm{8.6.4} of \rm{Chapter 8} in
\rm{\cite{payS84,payS09}}. For any {\rm TGQ}, each line incident with the
base point is regular; so $t\geq s$.
\end{enumerate}
\end{defn}
\noindent
For a detailed study of TGQs, see the monograph \cite{thaJ06} by 
Thas, Thas and Van Maldeghem. 

\begin{defn}
The {\em kernel} $\bf{K}$ of the  \rm{TGQ} $\cS$ is the field with multiplicative 
group isomorphic to the group of all collineations of $\cS$ fixing line-wise 
the translation point $P$ and any given point $Q\not\sim P$. From  
\rm{\cite{payS84,payS09}},  $|{\bf K}|\leq s$.
\end{defn}

\begin{theorem}
Let $\cS$ be a {\rm TGQ} of order $(s,t),\, s\neq 1,t\neq 1$, with kernel $\bf{K}$.
If $\Fq$ is a subfield of $\bf{K},$ then $\cS$ corresponds to a set $O(n,m,q)$ 
with $s=q^n,\,t=q^m$. Conversely, to each set $O(n,m,q)$ corresponds a 
{\rm TGQ} $\cS$ of order $(s,t),$ with $s=q^n,\,t=q^m$, where $\Fq$ is a subfield 
of the kernel.
\end{theorem}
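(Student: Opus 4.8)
The plan is to pass through the standard group-theoretic coordinatisation of an elation generalised quadrangle by a \emph{Kantor family} (or \emph{$4$-gonal family}) of subgroups, and then to projectivise. A TGQ is in particular an EGQ, so recall from Payne and Thas \cite{payS84,payS09} (see also \cite{thaJ06}) that a GQ of order $(s,t)$ that is an EGQ with elation group $\cH$ is isomorphic to the coset geometry $T(\mathcal{J})$ built from a family $\mathcal{J}=\{A_0,\dots,A_t\}$, $\{A_0^{*},\dots,A_t^{*}\}$ of subgroups of $\cH$ with $|A_i|=s$, $A_i\le A_i^{*}$, $|A_i^{*}|=st$, $|\cH|=s^{2}t$, subject to
\[
\text{(K1) } A_iA_j\cap A_k=\{1\}\ \text{for distinct }i,j,k,\qquad \text{(K2) } A_i^{*}\cap A_j=\{1\}\ \text{for }i\neq j,
\]
and, conversely, that every such family yields an EGQ of order $(s,t)$ with base point $(\infty)$. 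For a TGQ the group $\cH$ is (elementary) abelian, the kernel $\mathbf{K}$ acts on $\cH$ making it a vector space over $\mathbf{K}$, and --- this is the fact I would quote verbatim from \cite{payS84,payS09} rather than reprove --- every $A_i$ and every $A_i^{*}$ is a $\mathbf{K}$-subspace. Granting this: if $\Fq\subseteq\mathbf{K}$ then $\cH=V(2n+m,q)$ with each $A_i$, $A_i^{*}$ an $\Fq$-subspace, so $s=|A_i|=q^{n}$, $st=|A_i^{*}|=q^{n+m}$ (hence $t=q^{m}$) and $|\cH|=q^{2n+m}$; here $n\ge 1$, $m\ge 1$ since $s,t\neq 1$.

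Passing to $\PG(2n+m-1,q)=\PG(\cH)$, the subgroups $A_i$ become $(n-1)$-dimensional subspaces $\pi_i$ and the $A_i^{*}$ become $(n+m-1)$-dimensional subspaces $\tau_i$, and there are $t+1=q^{m}+1$ of them, as required in the definition of $O(n,m,q)$. It then suffices to match (K1), (K2) with the egg axioms. From (K2), $A_i\cap A_j\le A_i^{*}\cap A_j=\{1\}$, so $\dim_q(A_i+A_j)=2n$; with (K1), $A_k\cap(A_i+A_j)=\{1\}$, so $\dim_q(A_i+A_j+A_k)=3n$, i.e.\ $\pi_i,\pi_j,\pi_k$ span a $\Pi_{3n-1}$ --- axiom (i). From $A_i\le A_i^{*}$ and (K2), $\pi_i\subseteq\tau_i$ and $\tau_i\cap\pi_j=\emptyset$ for $j\neq i$, with $\tau_i$ uniquely determined as the image of $A_i^{*}$ --- axiom (ii). Thus $\{\pi_0,\dots,\pi_{q^{m}}\}$ is a set $O(n,m,q)$ with $s=q^{n}$, $t=q^{m}$, the parameter restrictions being those of Theorem \ref{thm23} together with the degenerate case $m=n$.

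For the converse, start from $O=\{\pi_0,\dots,\pi_{q^{m}}\}\subseteq\PG(2n+m-1,q)$ with tangent spaces $\tau_i$, put $\cH=V(2n+m,q)$, and let $A_i$, $A_i^{*}$ be the $n$- and $(n+m)$-dimensional vector subspaces over $\pi_i$, $\tau_i$. Running the dimension count of the previous paragraph backwards, axiom (i) gives $A_i\cap A_j=\{1\}$ and then (K1), while axiom (ii) gives $A_i\le A_i^{*}$ and (K2); the order conditions $|A_i|=s$, $|A_i^{*}|=st$, $|\cH|=s^{2}t$ hold by construction. Hence $\mathcal{J}$ is a Kantor family, so $T(\mathcal{J})$ is a GQ of order $(q^{n},q^{m})$; it is an EGQ with elation group the additive group of $V$, which is abelian, hence a TGQ with translation point $(\infty)$. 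Finally, each $\lambda\in\Fq^{\times}$ acts on $V$ by $v\mapsto\lambda v$, stabilising $0$, every $A_i$ and every $A_i^{*}$, so it induces a collineation of $T(\mathcal{J})$ fixing $(\infty)$ line-wise and fixing the point $0\not\sim(\infty)$; these collineations, with the zero map, form a field containing $\Fq$, so $\Fq$ is a subfield of the kernel, and $s=q^{n}$, $t=q^{m}$.

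The step I expect to be the main obstacle is the one flagged above: that the coordinatising subgroups $A_i$, $A_i^{*}$ are $\mathbf{K}$-subspaces in the forward direction --- equivalently, that in the converse direction $T(\mathcal{J})$ genuinely satisfies the generalised-quadrangle incidence axiom (iii), with neither missing nor spurious incidences. This rests on a careful analysis of how the kernel collineations act on the coset geometry (each must fix every pencil through $(\infty)$, hence each $A_i$, and dually each $A_i^{*}$), and it is precisely here that the hypotheses $s\neq 1$ and $t\neq 1$ enter, guaranteeing regularity of the lines through the translation point; I would cite the relevant lemmas of Payne and Thas, Chapter 8, and of \cite{thaJ06} rather than reproduce them. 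Everything else --- the dimension bookkeeping, the dictionary between subgroups and projective subspaces, and the verification of (K1)/(K2) against axioms (i)/(ii) --- is routine.
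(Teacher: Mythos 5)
Your proposal is correct and follows essentially the same route as the source the paper relies on for this statement (the paper itself gives no proof, citing Payne and Thas, Chapter 8, where the result is established exactly via the $4$-gonal family coordinatisation of an EGQ, the fact that the kernel makes the translation group and the subgroups $A_i$, $A_i^{*}$ into $\mathbf{K}$-spaces, and the coset-geometry construction $T(\mathcal{J})$ in the converse direction). The dimension bookkeeping matching (K1)/(K2) with the egg axioms is accurate, and the points you defer to the literature are precisely the ones that are standard there.
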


\begin{cor}
\begin{enumerate}[\rm(i)]
\item The theory of finite {\rm TGQs} is equivalent to the theory of the 
sets $O(n,m,q)$.

\item If $\cS$ is a {\rm TGQ} of order $(s,t),$ with $s\neq 1,\,t\neq
1,\,s=q^n,\,t=q^m$ and $n\neq m$, then$,$ by Theorem {\rm\ref{thm23}}$,$ 
$n<m\leq 2n,\ n(c+1) =mc$ with $c\geq 1$ odd$,$  and $2n =m$ for $q$ even.
 
\end{enumerate}
\end{cor}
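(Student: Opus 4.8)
The plan is to deduce both parts directly from the two results that immediately precede the statement: the correspondence theorem relating finite $\mathrm{TGQ}$s to the sets $O(n,m,q)$, and Theorem~\ref{thm23} (the Payne--Thas restrictions). No genuinely new argument is required; the only point that needs a word of care is the meaning of the word ``equivalent'' in part~(i).

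For part~(i) I would argue as follows. By the correspondence theorem, any $\mathrm{TGQ}$ $\cS$ of order $(s,t)$ (so $s,t\neq 1$ by our definition of $\mathrm{TGQ}$) whose kernel $\bf K$ contains a subfield $\Fq$ determines a set $O(n,m,q)$ with $s=q^n$ and $t=q^m$, and conversely every $O(n,m,q)$ determines such a $\mathrm{TGQ}$. Since $\bf K$ is a field it has a prime characteristic $p$, and hence contains $\bF_p$; therefore \emph{every} finite $\mathrm{TGQ}$ arises from some $O(n,m,p)$, and every $O(n,m,q)$ produces a $\mathrm{TGQ}$. To promote this bijection on objects to an equivalence of theories, I would check that the two constructions are mutually inverse up to the appropriate isomorphism, and that an isomorphism of $\mathrm{TGQ}$s carrying translation point to translation point corresponds to a projectivity of the ambient $\PG(2n+m-1,q)$ mapping one egg onto the other, so that definitions and theorems pass back and forth. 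This naturality is precisely what is recorded in \cite{payS84,payS09} and \cite{thaJ06}, so I would cite it rather than redo it.

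For part~(ii) I would simply specialise. Let $\cS$ be a $\mathrm{TGQ}$ of order $(s,t)=(q^n,q^m)$ with $s\neq 1$, $t\neq 1$ and $n\neq m$, and let $\Fq$ be a subfield of its kernel. By the correspondence theorem $\cS$ yields a set $O(n,m,q)$ with $m\neq n$, so Theorem~\ref{thm23} applies and gives $n<m\leq 2n$, the relation $n(c+1)=mc$ for some odd integer $c\geq 1$, and $m=2n$ when $q$ is even. That is exactly the content of part~(ii).

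The main (and essentially only) obstacle is the bookkeeping hidden behind ``equivalent'' in part~(i): one must verify that passing from a $\mathrm{TGQ}$ to its egg and back returns the same object up to isomorphism, and that the correspondence respects isomorphisms, so that the two theories really are interchangeable. Granting the correspondence theorem together with this naturality, both parts of the corollary follow at once, part~(ii) being an immediate application of Theorem~\ref{thm23}.
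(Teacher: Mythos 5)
Your proposal is correct and follows the same route the paper intends: the corollary is an immediate consequence of the preceding correspondence theorem together with Theorem~\ref{thm23}, and your added remark that the kernel, being a field, always contains a prime field $\bF_p$ (so every finite TGQ does arise from some $O(n,m,p)$) is exactly the point needed to make part~(i) complete. Part~(ii) is, as you say, just the application of Theorem~\ref{thm23} to the egg associated with $\cS$.
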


\section{Pseudo-ovoids, Moufang quadrangles and Fong-Seitz}
\label{sec11}

This section contains recent developments on the relation between TGQs,  
sets $O(n,m,q)$, and finite groups.

 Let $\cS = (\cP,\cB,\mathbf{I})$ be a GQ of order $(s,t)$. Let $P\in\cP$ be fixed and define the 
following condition $(M)_P$.

\begin{defn}  
\begin{enumerate}[\rm(i)]
\item $(M)_P$: For any two distinct lines $\ell,\ell'$ of $\cS$ incident with
$P,$ the group of collineations of $\cS$ fixing $\ell$ and $\ell'$ point-wise
and $P$ line-wise is transitive on the set of lines distinct from $\ell$ and
incident with a given point $Q$ on $\ell$, with $Q\neq P.$

\item $(M)$: $\cS$ satisfies $(M)$ if it satisfies $(M)_P$ for all $P\in\cP$.

\item $\cS$ is a {\em Moufang} GQ if and only if it satisfies $(M)$ and its dual $(M')$. 
\end{enumerate}
\end{defn}  

\begin{theorem} {\rm(Tits \cite{titJ76a})}
Every finite Moufang {\rm GQ} is classical or dual classical, and conversely.
\end{theorem}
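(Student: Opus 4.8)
The plan is to reduce the classification of finite Moufang generalised quadrangles to the known list of classical examples together with their duals, by proving the ``conversely'' part first (which is routine) and then attacking the harder direction. For the easy direction I would take each classical GQ --- the quadrics $Q(4,q)$, $Q(5,q)$, the Hermitian $H(3,q^2)$, $H(4,q^2)$, and the symplectic $W(q)$ --- and exhibit the root elations directly from the ambient projective geometry: each admits a large enough collineation group (a Chevalley group of type $B_2$, $C_2$, $^2A_3$, $^2A_4$) so that condition $(M)_P$ and its dual $(M')_\ell$ are verified by transitivity arguments on pencils of lines and ranges of points. By point--line duality it then suffices to check $(M)$ on, say, $Q(4,q)$, $Q(5,q)$, $H(3,q^2)$, $H(4,q^2)$ and dualise.

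For the main direction I would follow Tits's original strategy and argue group-theoretically. Starting from a finite GQ $\cS = (\cP,\cB,\mathbf{I})$ of order $(s,t)$ satisfying $(M)$ and $(M')$, the first step is to collect the root groups $U_{[\ell]}$ and $U_{[P]}$ associated with lines and points, establish their commutator relations, and show that together they generate a group $G$ acting on $\cS$ with a $BN$-pair of rank $2$; here the GQ is recovered as the associated building. The second step is to show that the root groups are abelian (or more precisely that the relevant filtration behaves as in a split $BN$-pair), which forces the Moufang condition in the sense of Tits--Weiss and pins down the possible commutator structures. The third step is the classification of these rank-$2$ Moufang structures: in the finite case the root group data are parametrised by a finite field (or a pair of finite fields in the mixed/Hermitian case), and enumerating the admissible parameter systems yields exactly the classical families listed above, with no exotic possibilities because finiteness kills the octonion and exceptional Jordan algebra cases that occur over general fields.

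The hard part will be the commutator-relation analysis and the proof that the root groups are abelian and satisfy a split structure: this is where one must rule out ``mixed'' configurations and show that the two ground rings involved (governing points and lines respectively) are genuine finite fields rather than some more general algebraic object. Technically this amounts to showing that certain maps between root groups induced by the $(M)_P$-transitivity are additive and multiplicative, so that a field structure emerges; the subtlety is that $s$ and $t$ need not be equal, and one has to track the asymmetry carefully through the divisibility constraints $s+t \mid st(s+1)(t+1)$ and $t \le s^2$, $s \le t^2$ recorded earlier. Once the parameter system is shown to be $(\Fq, \Fq)$, $(\Fq, \Fq)$ with a form, or $(\Fq, \Fqn)$ as appropriate, matching against the classical list is a finite check. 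I would expect to invoke, or reprove in the finite setting, the coordinatisation machinery for Moufang polygons, and the main obstacle is making that machinery yield a \emph{field} cleanly in the rank-$2$ quadrangle case where the two sides are governed by a priori different structures.
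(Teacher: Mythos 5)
Your outline reproduces, at the level of a programme, exactly the route that the paper itself points to: the survey gives no proof of this theorem but records in the remark following it that Tits deduces the result from the Fong--Seitz classification \cite{fonP73,fonP74} of finite groups with a $BN$-pair of rank $2$ and Weyl group $D_8$. Your easy direction (exhibiting root elations on $W(q)$, $Q(4,q)$, $Q(5,q)$, $H(3,q^2)$, $H(4,q^2)$ and dualising) is fine and genuinely routine. The problem is the main direction. Your step three --- ``enumerating the admissible parameter systems yields exactly the classical families \ldots{} matching against the classical list is a finite check'' --- is not a finite check; it \emph{is} the theorem. In the group-theoretic formulation it is the content of the two long Fong--Seitz papers, and in the geometric/coordinatising formulation it is the rank-$2$ quadrangle case of the Tits--Weiss classification of Moufang polygons, where one must rule out the mixed quadrangles (over a finite field these collapse to $W(q)$ because finite fields are perfect) and the exceptional quadrangles of types $E_6$, $E_7$, $E_8$, $F_4$ (which require anisotropic quadratic or pseudo-quadratic forms of dimension too large to exist over a finite field). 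Each of these exclusions requires real work, and the $F_4$ family was not even known when Tits announced the theorem, so ``no exotic possibilities'' cannot be asserted in advance: it must emerge from the commutator-relation analysis you defer. As written, the proposal states what must be proved rather than proving it.

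A second, smaller point: the paper's subsequent remark explains that a purely \emph{geometric} proof of this theorem (avoiding Fong--Seitz) is still not complete --- Payne and Thas \cite{payS84,payS09} reduced it to showing that a GQ of order $(s,s^2)$ in which every point is a translation point arises from an elliptic quadric in $\PG(5,s)$, and Theorems 12.6 and 12.7 of the survey settle this only for kernel ${\bf K}\neq{\bf F}_2$. So if your intention was to give a self-contained geometric argument rather than to invoke the group classification, you would in addition have to handle the ${\bf K}={\bf F}_2$ case, which the paper lists as an open problem. Either way, the decisive classification step in your plan is currently a citation-shaped hole, not a proof.
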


The classical and dual classical GQs are those arising from a quadric, a Hermitian variety, or a symplectic polarity.

\begin{rem}
Tits observes that this result follows from the classification in \cite{fonP73,fonP74} of all finite groups 
with a BN-pair of rank 2 having a Weyl group $D_8$.
\end{rem}

\begin{rem}
In their monograph \cite{payS84,payS09}, Payne and Thas made an almost
successful attempt to prove the Moufang theorem of Tits, in the finite case,
in a geometric way. To complete a geometric proof of this result, it would
suffice to show geometrically that each GQ of order $(s,s^2), s\neq 1,$ for
which each point is a translation point, is necessarily the GQ arising from an
elliptic quadric in $\PG(5,s)$. As a corollary, a purely geometric proof of a
large part of the theorem of Fong and Seitz would follow, namely in the case
where the Weyl group is $D_8$. 
\end{rem}

In \cite{thaJ22a}, the following stronger result on GQs of order $(s,s^2)$ is obtained.

\begin{theorem} {\rm(\cite{thaJ22a})}
\label{thm38}
Let $\cS$ be a {\rm TGQ} of order $(s,s^2), s> 1,$ having a regular line not incident
with the translation point and having $\cO = O(n,2n,q)$ as corresponding pseudo-ovoid
in $\PG(4n-1,q),\,q\neq 2$. Then $\cO$ is good.
\end{theorem}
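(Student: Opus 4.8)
The plan is to work in the dual correspondence between the TGQ $\cS$ and its pseudo-ovoid $\cO = O(n,2n,q)$ in $\PG(4n-1,q)$, and to translate the hypothesis ``there is a regular line $L$ not incident with the translation point $P$'' into a combinatorial incidence statement about $(3n-1)$-dimensional subspaces meeting $\cO$. Recall (Section~\ref{sec9}, Section~\ref{sec10}) that the lines of $\cS$ through $P$ correspond to the elements $\pi_i$ of $\cO$, the lines of $\cS$ not through $P$ correspond to certain $(3n-1)$-spaces, and concurrency/non-concurrency of lines translates into incidence/skewness of the associated subspaces. The first step is therefore to pin down exactly which $(3n-1)$-space $\Sigma$ of $\PG(4n-1,q)$ is attached to the regular line $L$, and to identify $\{L,L'\}^{\perp\perp}$ for lines $L'$ meeting $L$ with the set of elements of $\cO$ contained in (or related by tangent spaces to) $\Sigma$.

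Next I would unpack what regularity of $L$ buys us. Since $L$ is a line not through $P$, regularity of $\{L,M\}$ for every line $M$ (equivalently $|\{L,M\}^{\perp\perp}| = t+1 = q^{2n}+1$ whenever $L\not\sim M$) should force, on the pseudo-ovoid side, that a large family of $(3n-1)$-spaces through a fixed configuration each meet $\cO$ in the \emph{full} number $q^n+1$ of elements rather than in fewer. The key point to establish is: the span of any three elements $\pi_i,\pi_j,\pi_k$ of $\cO$ that are ``seen'' by the regular line $L$ — i.e.\ that lie in a common $(3n-1)$-space arising from a line of $\cS$ concurrent with $L$ — automatically contains $q^n+1$ elements of $\cO$. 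Comparing this with the definition of ``good at $\pi$'' (any $\Pi_{3n-1}$ containing $\pi$ and at least two other elements of $\cO$ contains exactly $q^n+1$ elements), one sees that it suffices to show that \emph{every} $(3n-1)$-space through a fixed element $\pi$ of $\cO$ that meets $\cO$ in $\geq 3$ elements occurs among those ``seen'' by $L$. Here the element $\pi$ at which goodness is proved will be the one corresponding to a line through $P$ that is concurrent with the regular line $L$ (such a line exists by GQ axiom (iii)).

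The crux of the argument, and the step I expect to be the main obstacle, is the counting/covering lemma that converts ``regularity of one line $L$'' into ``goodness at one element $\pi$.'' One has to count, in two ways, the $(3n-1)$-spaces $\Theta$ containing $\pi$ and meeting $\cO$ in at least $3$ elements: on one hand using the incidence data coming from the GQ (each such $\Theta$ with $\geq 3$ elements of $\cO$ corresponds to a line $M$ with $|M^{\perp}\cap\{{\rm stuff}\}|$ controlled by the regular pair $\{L,M\}$), on the other hand using the abstract combinatorics of pseudo-ovoids from the Payne--Thas theory (each hyperplane not containing a tangent space meets $\wtl{\cO}$ in a fixed number of points; each $\Pi_{3n-1}$ with $\geq 3$ elements of $\cO$ meets it in $1$ or $q^n+1$ elements — the latter exactly when $\cO$ is good on that section). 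The parity/divisibility restrictions of Theorem~\ref{thm23}, together with $t=s^2$, should make the two counts force every relevant $(3n-1)$-section through $\pi$ to have the large value $q^n+1$; this is where the condition $q\neq 2$ is needed, to rule out the degenerate small case and to guarantee that the elliptic-quadric-type count in $\PG(3,2)$ (where ``ovoid'' means only $5$ points) does not interfere. Once the covering lemma is in place, ``$\cO$ is good at $\pi$'' follows immediately from the definition, completing the proof.
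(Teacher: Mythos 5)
The survey states this theorem without proof (it is quoted from \cite{thaJ22a}), so the comparison is with the argument in that reference. Your high-level plan --- pass to the egg model of the TGQ, locate the unique line through the translation point $P$ concurrent with the regular line $L$, call the corresponding egg element $\pi$, and show that regularity of $L$ forces every $\Pi_{3n-1}$ through $\pi$ containing at least two further egg elements to contain exactly $q^n+1$ of them --- is indeed the right direction and matches the strategy of the actual proof. But the proposal has a genuine gap: the entire content of the theorem is the step you yourself label ``the main obstacle,'' namely the conversion of regularity of the single line $L$ into the statement that the relevant $(3n-1)$-sections of $\cO$ through $\pi$ are full. You describe what such a double count \emph{should} yield but do not carry it out, and the ingredient you propose to feed into it (``each $\Pi_{3n-1}$ with $\geq 3$ elements of $\cO$ meets it in $1$ or $q^n+1$ elements'') is not a general fact about pseudo-ovoids --- it is the Payne--Thas characterisation of \emph{regular} eggs, i.e.\ essentially the conclusion you are trying to reach. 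In the actual argument the sets $\{L,\pi_j\}^{\perp\perp}$ are identified explicitly inside the $T(\cO)$ model: their elements other than $L$ are lines through $P$, hence egg elements, and regularity forces $q^n$ of them together with $\pi$ to lie in, and fill up, the span $\langle\pi,\pi_j,\cdot\rangle$. None of that identification appears in your sketch.

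Two concrete errors compound this. First, for a pair of non-concurrent \emph{lines} in a GQ of order $(s,t)$ regularity means $|\{L,M\}^{\perp\perp}|=s+1=q^n+1$, not $t+1=q^{2n}+1$ as you write; the parameters swap under duality, and it is precisely the number $q^n+1=s+1$ that must match the count of egg elements in a good $(3n-1)$-section, so this is not a typo one can wave away. (Note also that lines of $\cS$ not through $P$ correspond to $n$-dimensional subspaces of $\PG(4n,q)$ meeting $\PG(4n-1,q)$ in an egg element, not to $(3n-1)$-spaces.) Second, your explanation of the hypothesis $q\neq 2$ --- that it excludes interference from the $5$-point ovoids of $\PG(3,2)$ --- concerns the case $n=1$ of a different object; the restriction here is on the subfield $\Fq$ of the kernel and enters the counting argument itself, so this part of the proposal would not survive being made precise.
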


Then, relying on results of Brown, Lavrauw, Lunardon, Payne, and Thas, the next theorem 
is obtained.

\begin{theorem} {\rm(\cite{thaJ22a})}
\label{thm39}
Let $\cS$ be a {\rm TGQ} of order $(s,s^2), s> 1,$ with translation point $P$
and  kernel $\bf{K}\neq \bF_2$. If $\cS$ has a regular line not incident with $P,$ then the 
following hold$:$
\begin{enumerate}[\rm(i)]
\item if $q$ is odd$,$ then $\cS$ is the point-line dual of the translation dual of a 
semifield flock {\rm TGQ}$;$

\item if $q$ is odd$,$ then $\cS$ is classical.
\end{enumerate}
\end{theorem}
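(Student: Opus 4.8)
The plan is to bootstrap from Theorem~\ref{thm38}. First observe that the hypotheses of the present theorem contain those of Theorem~\ref{thm38}: since the kernel ${\bf K}$ of $\cS$ is different from $\bF_{2}$, one may take the subfield $\Fq$ to be ${\bf K}$ itself, so that $q\neq 2$, and then $\cS$ corresponds to a set $\cO=O(n,2n,q)$ in $\PG(4n-1,q)$; moreover $\cS$ is a {\rm TGQ} of order $(s,s^{2})$ with $s>1$ having a regular line not incident with the translation point $P$. Hence Theorem~\ref{thm38} applies and $\cO$ is good, say at the element $\pi$. From here on, everything is a statement about this good egg $\cO$, its good element $\pi$, its translation dual $\cO^{*}$, and the identification $\cS=T(\cO)$ of $\cS$ with the {\rm TGQ} built from $\cO$.

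For part~(i), with $q$ odd, I would invoke the correspondence of Lunardon, Payne and Thas (see \cite{lunG97} and \cite{thaJ06}) between pseudo-ovoids that are good over $\Fq$ and semifield flocks: via the Veronesean description occurring in the three-case classification of good pseudo-ovoids for $q$ odd (\cite{thaJ97,thaJ99}), a pseudo-ovoid of $\PG(4n-1,q)$ that is good at $\pi$ determines a semifield flock ${\cal F}$ of a quadratic cone in $\PG(3,q^{n})$, and conversely. Following the generalised quadrangles through this correspondence, the flock {\rm GQ} $\cS({\cal F})$ has point-line dual $\cS({\cal F})^{D}=T(\cO^{*})$, the {\rm TGQ} built from the translation dual egg; taking first the translation dual of $\cS({\cal F})$ and then its point-line dual, and using that $\cO^{**}\cong\cO$, one recovers $T(\cO)=\cS$. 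Thus $\cS$ is the point-line dual of the translation dual of the semifield-flock {\rm TGQ} attached to ${\cal F}$, which is assertion~(i); no use of the regular line is made here beyond Theorem~\ref{thm38}.

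For part~(ii) I would return to the regular line and use it a second time. Granting part~(i), $\cS$ is, up to the point-line duality and the translation duality, the flock {\rm GQ} $\cS({\cal F})$ of a semifield flock ${\cal F}$ of $\PG(3,q^{n})$. Using the results of Brown, Lavrauw, Lunardon, Payne and Thas on the regular elements of flock {\rm GQ}s, together with the classification of good pseudo-ovoids for $q$ odd (\cite{thaJ97,thaJ99}), I would show that a \emph{proper} semifield-flock {\rm TGQ} of order $(s,s^{2})$ carries no regular line off its translation point. Since $\cS$ is assumed to have a regular line not incident with $P$, this forces ${\cal F}$ to be linear, so $\cO$ is a pseudo-quadric and $\cS$ is the classical generalised quadrangle arising from an elliptic quadric of $\PG(5,s)$; this is assertion~(ii).

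I expect the main obstacle to be the step just described---showing that a proper semifield-flock {\rm TGQ} has no regular line outside its translation point, equivalently pinning down exactly how line-regularity transports under the translation-duality linking $\cS$ with $\cS({\cal F})$, sharply enough to separate the pseudo-quadrics from the Kantor-Knuth and Class~(c) (Ganley, Roman, Penttila-Williams-Bader-Lunardon-Pinneri) eggs. By contrast, the passage from the regular line to the goodness of $\cO$ has already been isolated as Theorem~\ref{thm38}, and the egg-flock dictionary used in part~(i) is classical; so the real work of the proof is the assembly of the regularity results of Brown, Lavrauw, Lunardon, Payne and Thas into precisely this statement.
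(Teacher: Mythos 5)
First, a caveat on the comparison: the survey does not actually prove Theorem~\ref{thm39}; it quotes it from \cite{thaJ22a} and only indicates that it follows from Theorem~\ref{thm38} together with results of Brown, Lavrauw, Lunardon, Payne and Thas. Measured against that indicated route, your part~(i) is essentially the intended argument: ${\bf K}\neq\bF_2$ lets you take $q=|{\bf K}|\neq 2$, Theorem~\ref{thm38} yields a good egg $\cO$, and for $q$ odd the classification of good eggs (\cite{thaJ97,thaJ99}) combined with the Lunardon--Payne--Thas egg/semifield-flock dictionary gives the stated conclusion, with no further use of the regular line. (Do watch the order of the two dualities: the translation dual is defined only on the TGQ side, and the composite must again have order $(s,s^2)$; the standard formulation is that $\cS$ is the translation dual of the point-line dual of the flock GQ $\cS({\cal F})$.)

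Part~(ii) is where your proposal breaks down, and the root cause is that the statement you were given contains a misprint: in \cite{thaJ22a} the classicality conclusion is for $q$ \emph{even}, not odd --- as printed here, (ii) would subsume (i) and render it pointless, a tension you yourself half-observe. Consequently the key lemma your plan needs --- that a proper semifield-flock TGQ of order $(s,s^2)$ carries no regular line off its translation point, so that the regular line, used a second time, eliminates the Kantor--Knuth, Ganley/Roman and Penttila--Williams examples --- is not available in the literature and is not how the theorem is proved; for $q$ odd the argument stops at the semifield-flock conclusion of~(i). The actual second half is an even-characteristic argument: for $q$ even the good egg $\cO$ carries pseudo-ovals and pseudo-conics through the good element, and the theorems of Brown \cite{broM00b} (an ovoid of $\PG(3,q)$, $q$ even, with a conic section is an elliptic quadric) and Brown--Lavrauw \cite{broM04} (an egg of $\PG(4n-1,q)$, $q$ even, containing a pseudo-conic is a pseudo-quadric) force $\cO$ to be a pseudo-quadric, whence $\cS$ is the classical GQ arising from an elliptic quadric of $\PG(5,s)$. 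This is precisely why Brown and Lavrauw appear in the attribution preceding the theorem. So keep your part~(i), but replace the second half of your plan by this Brown/Brown--Lavrauw chain in even characteristic.
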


\begin{rem}{\rm(\cite{thaJ22a})}
Theorems \ref{thm38} and \ref{thm39} have many implications. For example, for 
${\bf K}\neq \bF_2,$ the `missing part' in \cite{payS84,payS09}, a purely geometric proof 
of the theorem of Tits and a large part of the theorem of Fong and Seitz.
\end{rem}

\noindent
{\bf Open problem 16.}
What happens in the case $\bf{K} = \bF_2$?

\section{Weak generalised ovoids}
\label{sec12}

\begin{defn}
A \emph{weak generalised ovoid} of $\PG(4n-1,q)$ is a set of $(n-1)$-dimensional subspaces, $q^{2n} + 1$ in number,
such that any three generate a $\Pi_{3n -1}$.
\end{defn}

\noindent
{\bf Open problem 15.}
Is every weak generalised ovoid a generalised ovoid?
\vspace*{2mm}

\noindent
For results on weak generalised ovoids, see Rottey and Van de Voorde \cite{rotS15a,rotS15b}.
\vspace*{3mm}

\section*{Acknowledgements}

The research was funded by Ghent University and the University of
Sussex. 

\noindent 
There is no conflict of interest in this paper. All
data is available.

\end{document}